\tikzstyle{dot}=[circle,draw,fill=black,inner sep=0pt, minimum width=3pt]
\tikzstyle{wdot}=[circle,draw,fill=white,inner sep=0pt, minimum width=3pt]
\tikzstyle{gdot}=[circle,draw,color=ForestGreen,fill=ForestGreen,inner sep=0pt, minimum width=3pt]
\tikzstyle{pdot}=[circle,color=WildStrawberry,fill=WildStrawberry,draw,inner sep=0pt, minimum width=3pt]
\tikzstyle{bdot}=[circle,draw,color=Plum,fill=Plum,inner sep=0pt, minimum width=3pt, minimum height = 3pt]
\pgfplotsset{compat=1.15}
\definecolor{xdxdff}{rgb}{0.49019607843137253,0.49019607843137253,1.}
\definecolor{uuuuuu}{rgb}{0.26666666666666666,0.26666666666666666,0.26666666666666666}
\definecolor{ududff}{rgb}{0.30196078431372547,0.30196078431372547,1.}
\definecolor{cite}{RGB}{44,123,182}
\definecolor{ref}{RGB}{215,25,28}
\theoremstyle{plain}
\newtheorem{thm}{Theorem}[section]
\newtheorem{corollary}[thm]{Corollary}
\newtheorem{lemma}[thm]{Lemma}
\newtheorem{proposition}[thm]{Proposition}
\newtheorem*{thm*}{Theorem}
\newtheorem*{corollary*}{Corollary}
\newtheorem*{lemma*}{Lemma}
\newtheorem*{ld*}{Lemma/Definition}
\newtheorem*{proposition*}{Proposition}
\theoremstyle{definition}
\newtheorem{definition}[thm]{Definition}
\newtheorem{remark}[thm]{Remark}
\newtheorem{example}[thm]{Example}
\newtheorem*{definition*}{Definition}
\newtheorem*{remark*}{Remark}
\newtheorem*{example*}{Example}
\newtheorem*{xca*}{Exercise}
\newtheorem*{claim*}{Claim}
\newtheorem*{fact*}{Fact}
\newtheorem*{notation*}{Notation}
\newtheorem*{construction*}{Construction}
\newtheorem*{ack*}{Acknowledgements}
\newtheorem*{question*}{Question}
\newtheorem*{problem*}{Problem}
\newtheorem*{conjecture*}{Conjecture}
\newtheorem*{assumption*}{Assumption}
\newcommand{\C}{\mathbb{C}}
\newcommand{\CC}{\mathbb{C}}
\newcommand{\Z}{\mathbb{Z}}
\newcommand{\ZZ}{\mathbb{Z}}
\newcommand{\Q}{\mathbb{Q}}
\newcommand{\R}{\mathbb{R}}
\newcommand{\pr}[1]{\mathbb P^{#1}}
\DeclarePairedDelimiter{\pair}{\langle}{\rangle}
\DeclareMathOperator{\id}{id}
\DeclareMathOperator{\rk}{rk}
\DeclareMathOperator{\Ker}{Ker}
\DeclareMathOperator{\Hom}{Hom}
\DeclareMathOperator{\Aut}{Aut}
\DeclareMathOperator{\td}{td}
\DeclareMathOperator{\im}{Im}
\DeclareMathOperator{\NS}{NS\,}
\newcommand{\lagvc}{\mathrm{Lag}_{\mathrm{vc}}}
\DeclareMathOperator{\Coh}{Coh}
\DeclareMathOperator{\pairing}{\left\langle \cdot\,,\cdot \right\rangle}
\newcommand{\Yt}{\widetilde{Y}}
\newcommand{\wt}{\widetilde{w}}
\newcommand{\FS}{\mathrm{FS}}
\begin{document}
\title[]{On the 
mirrors of \\ 
low degree del Pezzo surfaces}

\author[G. Gugiatti]{Giulia Gugiatti}
\address{GG: School of Mathematics\\ University of Edinburgh\\ Edinburgh EH9 3FD, United Kingdom} 
\email{giulia.gugiatti@ed.ac.uk}

\author[F. Rota]{Franco Rota}
\address{FR: Laboratoire de Math\'emathiques d'Orsay, Universit\'e Paris-Saclay, Rue Michel Magat, B\^at. 307, 91405 Orsay, France} 
\email{franco.rota@universite-paris-saclay.fr}

\thanks{
GG acknowledges support from the ERC Grant 819864, the ERC Starting Grant 850713, and a Simons Investigation Award (n. 929034).
FR is supported by the European Union’s Horizon 2020 Research and Innovation Programme under the  Marie Sk\l{}odowska-Curie grant agreement n. 101147384 (\href{https://cordis.europa.eu/project/id/101147384}{CHaNGe}). He acknowledges support from the EPSRC grant EP/R034826/1 and from the Deutsche Forschungsgemeinschaft (DFG, German Research Foundation) under Germany's Excellence Strategy -- EXC-2047/1 -- 390685813.
}

\subjclass[2020]{14J33, 53D37, 14F08, 14J45, 14J27, 14J17}

%%%old
%Primary 14F08, %Derived categories of sheaves, dg categories, and related constructions in algebraic geometry
%14A20, %Generalizations (algebraic spaces, stacks)
%14E16, %McKay correspondence, 
%14J45; %Fano varieties
%secondary 14J33 %mirror symmetry

%%%new
%14F08  	Derived categories of sheaves, dg categories, and related constructions in algebraic geometry

%14J17  	Singularities of surfaces or higher-dimensional varieties
%14J26  	Rational and ruled surfaces
%14J27  	Elliptic surfaces, elliptic or Calabi-Yau fibrations
%14J33  	Mirror symmetry (algebro-geometric aspects)
%53D37  	Symplectic aspects of mirror symmetry, homological mirror symmetry, and Fukaya category

%14J45   Fano varieties
\keywords{}

\begin{abstract}
We compare different constructions of mirrors of del Pezzo surfaces, focusing on degree $d \leq 3$. %low degree cases. 
In particular, we extract Lefschetz fibrations, with associated exceptional collections, from the mirrors obtained via the Hori--Vafa and Fanosearch program constructions, which we
%survey and 
relate to one another. 
We show with geometric methods that the Lefschetz fibrations define categorical mirrors.
With a more explicit approach, we give a sequence of (numerical) mutations relating the exceptional collections considered by Auroux, Katzarkov, and Orlov
with those arising in this paper. 
This uses the theory of surface-like pseudolattices, and extends some of the string junction results of Grassi, Halverson and Shaneson. Our argument lifts directly to an equivalence of certain Fukaya--Seidel categories %of exact symplectic Lefschetz fibrations. 
arising from our fibrations and those of Auroux, Katzarkov, and Orlov.
\end{abstract}

\maketitle

\setcounter{tocdepth}{1}
\tableofcontents

\section{Introduction}
In this paper we compare different mirror constructions for del Pezzo surfaces of degree at most three.
Mirror symmetry predicts that a Fano manifold $X$ admits as a mirror a LG model $(Y,w)$, that is, a pair formed by a non-compact $\cC^\infty$ manifold $Y$ together with a $\cC^\infty$ function $w\colon Y \to \C$ called the superpotential. 
%%%dim Y not specified
Generally speaking, both $X$ and $(Y,w)$ carry a symplectic structure and a complex structure -- often called, respectively, their A-side and their B-side. 

To say that $X$ is mirror to $(Y,w)$ means that:
\begin{itemize}
    \item[(i)] the A-side of $X$ corresponds to the B-side of $(Y,w)$, and
    \item[(ii)] the B-side of $X$ corresponds to the A-side of $(Y,w)$.
\end{itemize}

Statements (i) and (ii) can be given a precise meaning at different levels.
A formulation of (i), which we call \textit{cohomological}, 
identifies two cohomological invariants: the regularised quantum period of $X$, and a period of $(Y,w)$. In this case, we say that $X$ and $(Y,w)$ are cohomological mirrors. 
The Homological Mirror Symmetry (HMS) \cite{K1994} formulation of (ii), which we call \emph{categorical}, consists of an equivalence $D^b(\Coh(X))\simeq \mathrm{FS}(Y,w)$ between the bounded derived category of coherent sheaves on $X$ and the Fukaya--Seidel category of $(Y,w)$ \cite{Kon_Lectures, HV-Lag, Seidel} (we refer to $X$ and $(Y,w)$ as to categorical mirrors).  

Cohomological mirrors to del Pezzo surfaces can be obtained from the Fanosearch Program, 
or
equivalently from the Givental/Hori--Vafa construction, 
see Section \ref{sec:general}. They can be represented by Laurent polynomials in two variables.
Categorical mirrors to del Pezzo surfaces are constructed in \cite{Seidel-more, AKO06}. 
They are genus-one Lefschetz fibrations
that extend
to rational elliptic surfaces with a fibre of type $I_d$ over $\infty$, where $d$ is the degree of the del Pezzo surface.

This paper shows that, for del Pezzo surfaces $X_d$ of degree $d \leq 3$, the cohomological mirrors described above also give rise to categorical mirrors. We keep our approach as constructive and explicit as possible. 
The constructions we run below apply, in fact, 
to del Pezzo surfaces of any degree $d$. 
We comment at the end of the Section on our choice of focusing on $d \leq 3$: on the one hand, this is a starting point for future research objectives; on the other hand, our result gives a (non-commutative) answer to representation theoretic questions posed in string theory.

\subsection*{Main results} To compare the mirrors 
to $X_d$, it is useful to consider superpotentials with proper fibres.
The cohomological mirrors to $X_d$ described above give rise to 
rational elliptic surfaces $F_d \colon S_d \to \pr 1$ with singular fibres types:
\begin{equation}
    \label{eq:fibres}
\begin{array}{|c||c|c|c|}
%k & 0 & \lambda_{0} & \infty\\
 \hline
d=1 &  \mathrm{II}^\star & I_1 & I_1 \\
\hline
d=2 & \mathrm{III}^\star  & I_1 & I_2\\
\hline
d=3 & \mathrm{IV}^\star & I_1 & I_3\\
\hline
\end{array}
\end{equation}
The corresponding Weiestrass fibrations have a singularity of type $E_8$, $E_7$, $E_6$ over the first critical value.

The existence of these elliptic surfaces is well established in the literature.  
A construction of $F_d \colon  S_d \to \pr 1$, due to 
Tveiten \cite{Tve18},  starts from a representative Laurent polynomial mirror. We sketch in Section \ref{sec:elliptic-surf}. 
In the same section, taking a different approach, we start from the Givental/Hori–Vafa construction and we extract a local Weierstrass form for 
$F_d\colon S_d\to \pr 1$.

\begin{lemma}[Lemma \ref{lem:Wf1}]
\label{lem:Wf}    Equations \eqref{eq:Weierstrass-eqn} are local Weierstrass forms for the elliptic surfaces $F_d\colon S_d \to \pr 1$. 
\end{lemma}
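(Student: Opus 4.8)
The plan is to start from the Givental/Hori--Vafa superpotential $w\colon (\C^*)^2 \to \C$ mirror to $X_d$, whose generic fibre $w^{-1}(t)$ is an affine genus-one curve, and to exhibit explicitly the birational transformation carrying the pencil $\{w = t\}_{t}$ into the Weierstrass form \eqref{eq:Weierstrass-eqn}. Because we claim only a \emph{local} Weierstrass form, I would work over the affine base $\Spec \C[t] = \pr 1 \setminus \{\infty\}$, so that the fibre of type $I_d$ over $\infty$ is treated separately and need not be visible in \eqref{eq:Weierstrass-eqn}. Identifying $F_d\colon S_d \to \pr 1$ then reduces to checking that the singular fibres dictated by the discriminant of \eqref{eq:Weierstrass-eqn} match the finite entries of the table \eqref{eq:fibres}.

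Concretely, I would first fix the Laurent polynomial representing $w$, compactify each fibre inside the toric surface attached to its Newton polygon, and pick a section of the resulting genus-one fibration (e.g.\ a base point of the pencil lying on the toric boundary). Taking this section as origin of the group law, the classical normalisation of a plane cubic produces an equation $y^2 = x^3 + a(t)\,x + b(t)$ with $a, b \in \C[t]$; carrying out this change of variables is exactly what yields \eqref{eq:Weierstrass-eqn}. This is the computational core and, I expect, the main obstacle: the section must be chosen so that the fibre over the distinguished critical value develops the right additive degeneration, and arranging the coefficients to remain polynomial of the correct degrees (so that the model extends to a genuine rational elliptic surface) requires some care.

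It then remains to read off the fibre types. I would compute $\Delta(t) = -16\,(4a(t)^3 + 27 b(t)^2)$ and factor it: the finite critical values are its zeros, and at each I would record the triple $(\mathrm{ord}\,a, \mathrm{ord}\,b, \mathrm{ord}\,\Delta)$ and invoke Tate's algorithm. For $d=1,2,3$ the distinguished value should yield $\mathrm{II}^*$, $\mathrm{III}^*$, $\mathrm{IV}^*$ (with $\mathrm{ord}\,\Delta = 10, 9, 8$, i.e.\ the $E_8, E_7, E_6$ degenerations), while the remaining simple zeros of $\Delta$ give nodal $I_1$ fibres; this reproduces all finite entries of \eqref{eq:fibres}. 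The additive cases are subtler than the multiplicative ones, since separating e.g.\ $\mathrm{III}^*$ from neighbouring types requires the full chain of Tate conditions rather than a single vanishing order, so this is where the verification is most delicate. As a final consistency check I would match the Euler numbers ($10+1+1$, $9+1+2$, $8+1+3$, each summing to the expected $12$) and compare \eqref{eq:Weierstrass-eqn} against the known Weierstrass equations of the extremal rational elliptic surfaces with these configurations in the Miranda--Persson classification.
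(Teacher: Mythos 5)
Your overall skeleton---read the fibre types off the discriminant via Tate's algorithm, then pin the surface down through the Miranda--Persson picture of extremal rational elliptic surfaces---agrees with the paper's proof, which checks that \eqref{eq:Weierstrass-eqn} are \emph{globally minimal} (conditions (i)--(ii) of Section \ref{sec:basic-notions}), computes all three fibre types, and concludes by uniqueness of the extremal fibration $F_d$ \cite{miranda_persson}. Two of your steps differ from the paper in inessential ways: re-deriving \eqref{eq:Weierstrass-eqn} from the Hori--Vafa pencil is not part of the proof (the equations are given; the paper's derivation is pre-Lemma context, and it proceeds not by normalising a plane cubic---the compactified fibres are not plane cubics for $d=1,2$---but by exploiting the width-$2$ Newton polygon to present the fibre as a $2\colon 1$ cover of $\CC^\times$ and taking a discriminant).

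The genuine gap is your treatment of the fibre at infinity. You claim that the $I_d$ fibre over $\infty$ ``need not be visible'' and that identifying $F_d$ ``reduces to checking that the singular fibres \dots{} match the finite entries of the table \eqref{eq:fibres}.'' This fails for $d=2,3$: the finite configuration $\{\mathrm{III}^\star, I_1\}$ (resp.\ $\{\mathrm{IV}^\star, I_1\}$) plus the Euler-number count $12$ does \emph{not} determine the fibre at infinity, since $\mathrm{II}$ has Euler number $2$ and $\mathrm{III}$ has Euler number $3$, so your sums $9+1+2$ and $8+1+3$ are also realised by $\{\mathrm{III}^\star, I_1, \mathrm{II}\}$ and $\{\mathrm{IV}^\star, I_1, \mathrm{III}\}$. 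Both of these ambient configurations actually occur for rational elliptic surfaces (e.g.\ $y^2=x^3+c\lambda^3x+e\lambda^5$ has fibres $\mathrm{III}^\star$ over $0$, $I_1$ over $-4c^3/27e^2$, and $\mathrm{II}$ over $\infty$), and they have Mordell--Weil rank $1$, hence are not extremal and not isomorphic to $F_d$; moreover any uniqueness argument needs all fibres as input. So the fibre at $\infty$ must be computed, and this is precisely what global minimality buys: since $\deg a =4$, $\deg b = 6$ and $\deg \Delta = \ell+3$ by \eqref{eq:discriminant}, Tate's algorithm at $\infty$ yields $I_d$---the step of the paper's proof your proposal omits. (Your fallback of matching \eqref{eq:Weierstrass-eqn} literally against the tabulated extremal Weierstrass equations would also close the gap, but then the fibre-type analysis does no work and the required change of variables is left entirely unaddressed.)
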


Restricting 
$F_d$ to the preimage of $\CC \subset \mathbb{P}^1$ we obtain an LG model, which we denote by $(Y_d, w_d)$. This LG model is a cohomological mirror to $X_d$ by construction.
It is natural to ask whether $(Y_d,w_d)$ also defines  a categorical mirror. 

One approach is to argue that, once the complex structure is disregarded, $(Y_d,w_d)$ is equivalent to the LG model built in \cite{AKO06}. We comment on this in Remark \ref{rem:Y-mirror}.
Otherwise, one may study the derived category $\mathrm{FS}(Y_d,w_d)$ directly. 
When the superpotential of an LG model $(Y,w)$ has isolated critical points, the category $\mathrm{FS}(Y,w)$ coincides with the better-understood derived category $D^b\lagvc(\hat{w})$ of Lagrangian vanishing cycles of a Morsification $\hat{w}$ of $w$ \cite{Seidel}. 
While the map $w_d$ has non-isolated singularities, we still take a similar approach and consider a perturbation $(\Yt_d,\wt_d)$ of $(Y_d,w_d)$ which defines a Lefschetz fibration.

To construct $(\Yt_d,\wt_d)$, we proceed as follows.
As a first step, we build an explicit rational elliptic surface whose fibres over $\CC$ are all of type $I_1$ and whose fibre type over $\infty$ matches that of $F_d$. We obtain it by a suitable perturbation of  Equation \eqref{eq:Weierstrass-eqn}, given in Equation \eqref{eq:dP-def}. 
\begin{lemma}[Lemma \ref{lem:lemma}]
    \label{lem:short-lemma}
    Equation \eqref{eq:dP-def} defines a rational elliptic surface with $12-d$ fibres of type $I_1$ over $\CC$, and a fibre of type $I_d$ over $\infty$. 
\end{lemma}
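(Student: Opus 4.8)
The plan is to regard Equation \eqref{eq:dP-def} as a global Weierstrass fibration over $\pr 1$, written in the affine chart as $y^2 = x^3 + A(t)\,x + B(t)$ with $A, B \in \CC[t]$, and to recover its Kodaira fibre types from the discriminant $\Delta = 4A^3 + 27B^2$, running Tate's algorithm only at the point over $\infty$.

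First I would establish that the relatively minimal model of this fibration is a \emph{rational} elliptic surface. Since the equation is a Weierstrass model carrying the zero section, it suffices to check the degree bounds $\deg A \le 4$ and $\deg B \le 6$ that single out the case $\chi(\cO_S) = 1$; equivalently, one verifies that $\Delta$ is a section of $\cO_{\pr 1}(12)$. Combined with the absence of multiple fibres (automatic for a Weierstrass model with section), this forces $S_d$ to be rational, and in particular its topological Euler number equals $12$.

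The main obstacle, and the step requiring genuine care, is the fibre over $\infty$. Here I would set $s = 1/t$, clear denominators to put the equation into a local minimal Weierstrass form at $s = 0$, and run Tate's algorithm on the resulting valuations of $A$, $B$, and $\Delta$. The key point is that the perturbation distinguishing \eqref{eq:dP-def} from \eqref{eq:Weierstrass-eqn} only alters terms of lower order in $t$, and hence does not change the leading behaviour at $s = 0$; I therefore expect the valuation data at infinity to coincide with that of $F_d$ recorded in Lemma \ref{lem:Wf}, producing a multiplicative fibre of type $I_d$. Since $I_d$ is multiplicative, it is stable under such a perturbation, in contrast to the additive $E_{9-d}$-type fibre of $F_d$ at the finite critical value, which the perturbation is designed to break apart. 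The delicate verification is precisely that Tate's algorithm terminates at $I_d$ --- that the perturbation neither creates an additive reduction nor further degenerates the fibre at infinity.

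Finally I would count the fibres over $\CC$. For a minimal Weierstrass model the vanishing order of $\Delta$ at each singular fibre equals its topological Euler number, so the total order of $\Delta$ is $12$; the $I_d$ fibre at $\infty$ contributes $d$, leaving order $12 - d$ over the affine line. It then remains to check that the remaining zeros of $\Delta$ in $\CC$ are all simple and that at each of them the cubic acquires only a single node: by Kodaira's table a simple zero of $\Delta$ with $A \neq 0$ there is exactly a fibre of type $I_1$. This is where the explicit genericity of the perturbation in \eqref{eq:dP-def} enters --- once the roots of $\Delta$ over $\CC$ are seen to be distinct, the Euler-number identity $(12 - d)\cdot 1 + d = 12$ confirms that these are all the singular fibres and completes the count.
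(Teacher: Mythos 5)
Your proposal is correct and takes essentially the same approach as the paper, whose proof of this lemma just repeats that of Lemma \ref{lem:Wf1}: view \eqref{eq:dP-def} as a globally minimal Weierstrass form, deduce rationality, and determine the fibres by Tate's algorithm --- the simple zeros of $\widetilde{\Delta}$ over $\CC$ give the $12-d$ fibres of type $I_1$, while $\deg\widetilde{a}=4$, $\deg\widetilde{b}=6$ and $\deg\widetilde{\Delta}=12-d$ give $I_d$ at $\infty$.

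One imprecision in your first step is worth flagging: the degree bounds plus the existence of a section do \emph{not} by themselves force rationality. For instance, $y^2=x^3+\lambda^4x+\lambda^6$ satisfies both, yet its Weierstrass datum is non-minimal at $\lambda=0$ and the relatively minimal model is an isotrivial, non-rational surface. The missing hypothesis is minimality of the Weierstrass data --- the paper's condition (ii) that the discriminant is not a twelfth power --- and not ``absence of multiple fibres'', which is irrelevant here. Your argument survives because your later steps supply exactly this: separability of $\widetilde{\Delta}$ over $\CC$ and $v_\infty(\widetilde{\Delta})=d$ with $v_\infty(\widetilde{a})=0$ imply minimality at every point of $\pr 1$, so the conclusion stands once the steps are reordered so that the fibre analysis precedes the rationality claim.
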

Restriction to $\CC$ produces the LG model $(\widetilde{Y}_d, \widetilde{w}_d)$.
In Section \ref{sec:geometry-cats} we show by geometric methods that $(\Yt,\wt)$ is equivalent to the LG model built in \cite{AKO06}, hence it is a categorical mirror to $X_d$ (see Proposition \ref{prop:EquivalenceYtildeAko}).

Arguing more concretely, in Section \ref{sec:PseudolatticesAnd Mutations} we exhibit an isometry between the Grothendieck groups of $D^b(\Coh(X_d))$ and $\mathrm{FS}(\wt_d)$ obtained as a sequence of numerical mutations (Definition \ref{def:num-mutation}). We proceed as follows.

Since the map $\widetilde{w}_d$ is a Lefschetz fibration,  
the category $\mathrm{FS}(\widetilde{Y}_d, \widetilde{w}_d)$ coincides with  $D^b\lagvc(\widetilde{w}_d)$.
Both $D^b\Coh(X_d)$ and $D^b\lagvc(\widetilde{w}_d)$ admit full exceptional collections,  hence their Grothendieck groups, which we denote by $K_0(X_d)$ and $K_0(\widetilde{w}_d)$, coincide with the numerical Grothendieck groups. 
In Section \ref{sec:construct-vc},  using bifibration techniques
\cite[III.(15e))]{Seidel}
,  we construct exceptional bases
%full exceptional collections 
for 
%the Grothendieck groups 
$K_0(\widetilde{w}_d)$
%$D^b\lagvc(\widetilde{w}_d)$
and describe 
the homology classes of the corresponding vanishing cycles in $H_1(\widetilde{w}_d^{-1}(0), \ZZ)$. 
\begin{proposition}[Proposition \ref{pro:vc}]\label{pro:coho-classes} The sequences \eqref{eq:vc-dP1}, \eqref{eq:vc-dP2}, and \eqref{eq:vc-dP3} yield the homology classes of the vanishing cycles corresponding to the chosen exceptional bases.
\end{proposition}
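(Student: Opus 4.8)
The plan is to treat $\wt_d$ as a Lefschetz bifibration in the sense of \cite[III.(15e)]{Seidel} and to extract each vanishing cycle from the collision of branch points in a fibrewise double cover. Concretely, I would start from the Weierstrass equation \eqref{eq:dP-def}, which presents $\Yt_d$ as a family of plane cubics $y^2 = x^3 + a(t)x + b(t)$ over the $t$-line $\CC$, and project each fibre to the $x$-coordinate. This realises the generic fibre $\wt_d^{-1}(t)$ as a double cover of $\pr 1_x$ branched over $\infty$ and the three roots $x_1(t), x_2(t), x_3(t)$ of the cubic, a genus-one curve with $H_1 \cong \ZZ^2$; the $x$-projection is the second fibration of the bifibration, and its total space is what I use to parallel transport cycles between fibres of $\wt_d$.

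First I would compute the discriminant $\Delta(t)$ of the cubic. By Lemma \ref{lem:short-lemma} it vanishes to order one at $12-d$ points of $\CC$, which are precisely the critical values of $\wt_d$, while the behaviour at $\infty$ produces the fibre of type $I_d$. Fixing the reference fibre $\wt_d^{-1}(0)$ together with a basis $\{A,B\}$ of $H_1(\wt_d^{-1}(0),\ZZ)$ given by lifts of two arcs among the branch points, I would then choose a distinguished system of vanishing paths from $0$ to the critical values, ordered to agree with the exceptional basis constructed in Section \ref{sec:construct-vc}. Along each such path exactly two of the branch points $x_i(t)$ come together at the critical value, and the associated vanishing cycle is the lift to the double cover of the vanishing arc joining them; transporting this matching cycle back to $\wt_d^{-1}(0)$ and expanding it in $\{A,B\}$ yields the classes recorded in \eqref{eq:vc-dP1}, \eqref{eq:vc-dP2}, and \eqref{eq:vc-dP3}.

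The main obstacle is the bookkeeping of how the branch points braid around one another as $t$ traverses the chosen legs of the distinguished system: the class of a matching cycle depends on this braiding through the half-twists its arc undergoes, so I must fix consistent conventions for orientations and for the ambient lift. To control this I would follow the roots $x_i(t)$ explicitly along each path, record the resulting braid, and let the Picard--Lefschetz formula $\gamma \mapsto \gamma + \langle \gamma, \delta_i\rangle\, \delta_i$ pin down the successive monodromies in the chosen basis.

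As a consistency check that simultaneously validates the signs and orientations in \eqref{eq:vc-dP1}--\eqref{eq:vc-dP3}, I would verify that the ordered product of the $12-d$ Picard--Lefschetz transformations equals the monodromy at $\infty$, which for a fibre of type $I_d$ is conjugate to $\left(\begin{smallmatrix} 1 & d \\ 0 & 1 \end{smallmatrix}\right)$; this reproduces the global topology asserted in Lemma \ref{lem:short-lemma}. The count of $12-d$ cycles is itself forced, since the Euler characteristic $e(\widetilde{S}_d)=12$ splits as $12-d$ nodal contributions over $\CC$ plus the contribution $e(I_d)=d$ of the fibre over $\infty$.
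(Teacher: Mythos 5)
Your proposal follows essentially the same route as the paper: both exploit the bifibration structure of Remark \ref{rem:bifibration} to present the reference fibre $\wt_d^{-1}(0)$ as a double cover of $\pr 1$ branched at the roots of the cubic and $\infty$, choose straight-line vanishing paths ordered clockwise from $\lambda_0$, track which pair of roots collides along each path (the paper does this by computer algebra), and read off each vanishing cycle as the lift of the resulting matching arc expressed in the basis $\{a,b\}$ of $H_1$. Your added consistency checks (the product of Picard--Lefschetz twists matching the $I_d$ monodromy at $\infty$, and the Euler characteristic count $12 = (12-d) + d$) are sound and compatible with Lemma \ref{lem:lemma}, but do not change the substance of the argument.
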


Using pseudolattice techniques, recalled in Section \ref{sec:surfacelikePseudolatt-prelim}, 
we produce %an explicit isometry $K_0(X_d) \simeq K_0(\widetilde{w}_d)$ given by 
the desired sequence of numerical mutations.

\begin{thm}[Theorem \ref{thm:MutationSequence}]
    \label{thm:short-thm}
The basis for $K_0(X_d)$ given in \cite[Equation (2.3)]{AKO06} and the basis constructed in Section \ref{sec:construct-vc} for $K_0(\widetilde{w}_d)$ are related by the
sequences of numerical mutations \eqref{eq:mutationsFord1}, \eqref{eq:mutationsFord2}, \eqref{eq:mutationsFord3} (for $d=1,2,3$). \end{thm}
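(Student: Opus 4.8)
The plan is to establish an explicit chain of numerical mutations connecting two exceptional bases for the same numerical Grothendieck group, which I will henceforth identify by the isometry described in the excerpt. Both $K_0(X_d)$ and $K_0(\widetilde{w}_d)$ are free abelian groups of rank $12-d+1$ equipped with the Euler pairing, and by Proposition \ref{pro:coho-classes} I already have the homology classes of the vanishing cycles for the basis coming from the Lefschetz fibration $\widetilde{w}_d$. The first step is therefore purely computational setup: write down both bases as integer vectors together with their Gram matrices with respect to the Euler form. For the AKO basis I read off the charges from \cite[Equation (2.3)]{AKO06}; for the fibration basis I translate the vanishing-cycle classes in $H_1(\widetilde{w}_d^{-1}(0),\ZZ)$ into Grothendieck-group charges via the surface-like pseudolattice structure recalled in Section \ref{sec:surfacelikePseudolatt-prelim}. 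Once both bases live in the same lattice with the same pairing, the statement becomes a finite combinatorial claim about mutating one ordered basis into the other.

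The second step is to exploit the surface-like pseudolattice formalism to keep the mutations under control. A numerical mutation (Definition \ref{def:num-mutation}) is the action on charges induced by a left or right mutation of an exceptional collection; each such move is an explicit involution-like transvection on the lattice, determined by the Euler pairing of the two objects being exchanged. Because both collections are exceptional and full, any two of them are related by \emph{some} sequence of mutations together with shifts (sign changes) and an overall permutation-free reordering dictated by the braid group action. The pseudolattice techniques, extending the string-junction analysis of Grassi--Halverson--Shaneson, give me invariants — the rank, the determinant of the defect lattice, and the class of the distinguished ``$[\mathrm{pt}]$'' and ``$[\cO]$'' vectors — that must be preserved, and these constrain which mutations can possibly work. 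I would use these invariants both to guide the search and, once a candidate sequence \eqref{eq:mutationsFord1}--\eqref{eq:mutationsFord3} is written down, to verify by direct matrix multiplication that the composite transformation sends the fibration basis exactly to the AKO basis.

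The actual verification, carried out separately for each $d\in\{1,2,3\}$, is then a matter of composing the transvections in the prescribed order and checking equality of the resulting integer matrices; this is routine linear algebra that I would not grind through here, but would instead present as the three explicit mutation sequences with the final Gram-matrix match exhibited. The main obstacle, and the genuinely non-mechanical part of the argument, is \emph{finding} the correct sequence: the braid group on $12-d+1$ strands acts with enormous orbits, and there is no a priori short path between two given exceptional bases. I expect to locate the sequence by reducing both bases to a common normal form — for instance, mutating each toward the standard ``geometric'' basis adapted to the $E_{9-d}$-type root system underlying the $I_d$ fibre over $\infty$ — and then concatenating one sequence with the reverse of the other. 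Ensuring the intermediate steps remain \emph{numerically exceptional} (so that each mutation is legitimate) is the delicate point, and is precisely where the surface-like pseudolattice constraints do the heavy lifting: they rule out illegal moves and certify that the path I construct stays inside the orbit of genuine exceptional collections, so that the final equivalence of Fukaya--Seidel categories is the expected consequence.
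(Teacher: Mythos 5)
Your verification framework --- express the vanishing-cycle basis as pseudolattice charges, apply each numerical mutation as an explicit transvection, and check that the resulting Gram matrix equals $M_\ell$ in \eqref{eq:GramDP} --- is exactly how the paper handles $d=3$: the sequence $\beta_3$ is applied step by step to $a+b,b,a,b,a,b,a,b,a$, tracking the classes in $H_1(\Sigma,\Z)$ and adjusting a few orientations at the end. Where you genuinely diverge is in how the sequences are \emph{found}. The paper locates $\beta_3$ by norm reduction in the sense of surface-like pseudolattices (the point-like object $\bfp$ is extracted from the Serre operator as a primitive generator of $\im(\id-\rS)^2$, and mutations are chosen to decrease the norm), and then does \emph{not} repeat that search for $d=2,1$: instead it interpolates the Weierstrass forms \eqref{eq:dP-def} of consecutive degrees (Equation \eqref{eq:us}), tracks the trajectories of the critical values (Figures \ref{fig:Interpol_dp2_dp3} and \ref{fig:Interpol_dp1_dp2}), and reads off from them the extra mutations in \eqref{eq:mutationsFord2} and \eqref{eq:mutationsFord1}, including the bookkeeping of the cycles ``extended over infinity'' coming from the $I_d$ fibre ($c_d=\pm b$). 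This deformation step, which mirrors the fact that the del Pezzo surfaces are blow-ups of one another, is the distinctive idea of the proof and is absent from your plan; your ``common normal form'' search would have to rediscover all of this by brute force.

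Two points in your proposal are genuinely off. First, the step you single out as delicate --- ``ensuring the intermediate steps remain numerically exceptional,'' with pseudolattice constraints ``ruling out illegal moves'' --- is vacuous: by the basic theory recalled in Section \ref{sec:surfacelikePseudolatt-prelim}, a left or right mutation of an exceptional sequence is again exceptional, so every move is automatically legal and there is nothing for the invariants to certify. Second, your normal-form strategy rests on the assertion that any two full numerical exceptional bases are connected by mutations (``dictated by the braid group action''); transitivity of the braid action on \emph{numerical} exceptional bases of a pseudolattice is not a known general fact (it is precisely the kind of subtle question Kuznetsov's norm-minimality theory probes), so without it you have no guarantee that your two reduction paths meet. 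The paper never needs such a statement, because it exhibits the sequences explicitly. Relatedly, your closing remark that the equivalence of Fukaya--Seidel categories is ``the expected consequence'' of the numerical statement overreaches: lifting the isometry to categories is the content of Section \ref{sec:lifts} and requires real symplectic input (exactness, Moser's trick, Seidel's uniqueness lemma), not just the charge computation. Since the theorem's entire content is the explicit sequences \eqref{eq:mutationsFord1}--\eqref{eq:mutationsFord3} together with their verification, a proposal that defers both the discovery and the matrix check leaves the actual proof unexecuted.
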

In Section \ref{sec:thm} we discuss how Theorem \ref{thm:short-thm} provides a tool to obtain a
%an explicit 
numerical mutation sequence for any given LG model diffeomorphic to $(\widetilde{Y}_d, \widetilde{w}_d)$ whose superpotential is a Lefschetz fibration.

In Section \ref{sec:lifts}, we lift the sequence of mutations in Theorem \ref{thm:short-thm} to an equivalence between the derived categories of Lagrangian vanishing cycles of certain exact symplectic Lefschetz fibrations.
These are denoted $(G_d,q_d,\varpi_d)$ and $(G'_d,q'_d,\varpi'_d)$, where $q_d \colon G_d \to \C$ is obtained by removing the distinguished section of $\wt_d$ and $q'_d\colon G'_d \to \C$ is obtained in the same way from the mirror in \cite{AKO06}. The symplectic forms $\varpi_d, \varpi'_d$ are the restriction of forms whose class is Poincar\'e dual to the section. 

\begin{thm}[Theorem \ref{thm:equivOfCategories} and Corollary \ref{cor:equivOfCategories}]
With the choice of symplectic forms above, there  is an equivalence 
\[ D^b\lagvc(q_d) \xrightarrow{\sim} D^b\lagvc(q'_d)\]
which maps the exceptional collections in Section \ref{sec:construct-vc} to those given in \cite[Equation (2.3)]{AKO06}.
\end{thm}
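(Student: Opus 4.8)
The plan is to realize the purely numerical mutation sequence of Theorem \ref{thm:short-thm} as a genuine sequence of \emph{geometric} mutations of distinguished bases of vanishing cycles, and then to invoke Seidel's invariance of the directed Fukaya category under such moves. Recall from \cite{Seidel} that an exact symplectic Lefschetz fibration together with a choice of distinguished vanishing paths determines an ordered basis of Lagrangian vanishing cycles, whose directed $A_\infty$-category has $D^b\lagvc$ as its derived category, with the vanishing cycles forming a full exceptional collection; changing the vanishing paths by a Hurwitz half-twist replaces the exceptional collection by the corresponding mutation, while leaving $D^b\lagvc$ invariant up to equivalence. It therefore suffices to produce, for $q_d$ and $q'_d$, distinguished bases of vanishing cycles related by the sequence of Hurwitz moves whose action on the Grothendieck group is the numerical mutation sequence of Theorem \ref{thm:short-thm}.

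The lifting step relies on the special geometry of the fibers. Since $q_d$ and $q'_d$ are obtained from genus-one fibrations by removing the distinguished section, a regular fiber is a once-punctured torus $T$, and each vanishing cycle is an essential simple closed curve on $T$. On such a surface the isotopy class of an essential simple closed curve is determined by its primitive class in $H_1(T,\ZZ)$; together with the $\ZZ$-grading recorded by the shift, this means that the homology class of a graded vanishing cycle --- computed in Proposition \ref{pro:coho-classes} for $q_d$ and read off from \cite[Equation (2.3)]{AKO06} for $q'_d$ --- determines the graded Lagrangian up to the relevant isotopy. A single Hurwitz half-twist acts on the cycle being moved by the (inverse) Dehn twist along the adjacent cycle, and by the Picard--Lefschetz formula this Dehn twist induces on $H_1(T,\ZZ)$ exactly the symplectic transvection $x \mapsto x \pm \langle x, v\rangle\, v$ defining the numerical mutation. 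Consequently each numerical mutation in the sequence is realized by an explicit geometric mutation, and composing them transforms the $q_d$-basis into one whose vanishing cycles carry the homology classes of the $q'_d$-basis.

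With this in hand, I would first fix the symplectic forms as in the statement, with cohomology class Poincar\'e dual to the removed section, so that both fibrations are exact with matching behaviour near infinity and the graded Lagrangians acquire well-defined Maslov indices compatible with the mutation shifts. Since the total spaces $G_d$ and $G'_d$ are both complements of a section in a rational elliptic surface with the same $I_d$ fiber over $\infty$, the geometric identification of Section \ref{sec:geometry-cats} (underlying Proposition \ref{prop:EquivalenceYtildeAko}) makes them exact symplectomorphic; transporting $q'_d$ across this symplectomorphism places both distinguished bases on the same surface, where the computation above shows they are Hurwitz-equivalent. Seidel's invariance then yields the equivalence $D^b\lagvc(q_d)\xrightarrow{\sim} D^b\lagvc(q'_d)$, and by construction the composite of mutation functors sends the exceptional collection of Section \ref{sec:construct-vc} to that of \cite[Equation (2.3)]{AKO06}.

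The main obstacle is the precise matching between the geometric Dehn-twist action and the numerical mutation, including signs and gradings: the transvection on $H_1(T,\ZZ)$ must agree with the Euler-form mutation not merely up to sign but compatibly with the Maslov grading, so that the geometric and the algebraic mutation induce the \emph{same} operation on the exceptional collection and not only on unoriented homology. This is exactly where the choice of symplectic form is used --- fixing the class dual to the section pins down the grading structure, forcing each vanishing cycle to carry the Maslov index that makes the Dehn twist reproduce the \emph{signed} transvection of the numerical mutation. Verifying this compatibility, together with checking that the un-mutated distinguished bases are correctly normalised on both sides, is the technical heart of the argument; once it is established, the equivalence and the tracking of the exceptional collections follow formally.
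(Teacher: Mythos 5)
Your overall skeleton --- lift the numerical mutations of Theorem \ref{thm:MutationSequence} to geometric mutations of vanishing paths, use the fact that on a (punctured) torus an essential simple closed curve is determined up to isotopy by its primitive homology class, and conclude via Seidel's machinery --- is the same as the paper's. However, there is a genuine gap at your transport step. You claim that the identification of Section \ref{sec:geometry-cats} (Proposition \ref{prop:EquivalenceYtildeAko}) makes $G_d$ and $G'_d$ \emph{exact symplectomorphic} with respect to the forms fixed in the statement. It does not: that proposition, via \cite[Corollary 4.17]{HarderThompson20}, produces only a \emph{diffeomorphism} of LG models, and the categorical equivalence there is obtained by \emph{defining} the symplectic form on $\Yt_d$ as the pullback of a form chosen on the other side. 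In Section \ref{sec:lifts} the forms $\varpi_d,\varpi'_d$ are fixed independently (Poincar\'e dual to the respective sections), and no symplectomorphism intertwining $q_d$ and $q'_d$ is available a priori --- producing one is essentially the content of the theorem, so your argument assumes the hard part. The paper's proof runs in the opposite direction: it first mutates the collection of Section \ref{sec:construct-vc} so that its homology classes match those of \cite[Lemma 3.1]{AKO06}, then lifts the resulting homology isomorphism to a diffeomorphism $\phi$ of the reference \emph{fibers} only, normalizes $\phi$ by Moser's trick so that the Liouville primitives satisfy $\theta - \phi^*\theta' = dg$, deduces that the matched exact vanishing cycles bound zero area and hence are Hamiltonian isotopic, and finally invokes the uniqueness part of \cite[Lemma 16.9]{Seidel} to conclude that the two exact Lefschetz fibrations are isomorphic. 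The fibered symplectic identification is an \emph{output} of this argument, not an input.

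Two further points are glossed in your proposal. First, "homology class determines the curve up to isotopy" gives only smooth isotopy, whereas the directed category requires the vanishing cycles to match up to \emph{Hamiltonian} isotopy; the upgrade comes precisely from exactness and the Moser normalization of the primitives (homologous exact curves bound zero area), not from the grading and sign bookkeeping you identify as the technical heart --- gradings are handled by shifts and are not where the choice of symplectic form intervenes. Second, Seidel's framework applies to exact fibrations with compact fibers and controlled horizontal boundary, so the paper passes to truncated models $(E,\pi,\omega)$, $(E',\pi',\omega')$ (complements of a tubular neighbourhood of the section, trivialized near the horizontal boundary) and then compares $D^b\lagvc(E,\pi,\omega)$ with $D^b\lagvc(G,q,\varpi)$ separately, using Hamiltonian isotopies and the maximum principle (Corollary \ref{cor:equivOfCategories}); your proposal works directly on $G_d$, whose fibers are non-compact, and this comparison step is missing.
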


We comment on the relation between $D^b\lagvc(q_d)$ and $D^b\lagvc(\wt_d)$ in Remark \ref{rem:BulkDef}.

\subsection*{Notes on the construction}
%Grandi seghe
%Unlike \cite{Loo81, Friedman15}, our construction yields an explicit deformation of the elliptic surface $F_d \colon S_d \to \pr 1$ into the elliptic surface of Lemma \ref{lem:short-lemma}; moreover, the deformation is such that every member of the family remains an elliptic surface. We expand this discussion in Remark \ref{rem:def-Wf-es}. 
The choice of the perturbation \eqref{eq:dP-def} is modeled on \cite[Equations (3.9) and (3.11)]{GHS13:matter} and allows us to recover the collections of vanishing cycles appearing in \cite[Section 3.4]{GHS13:matter}.
While
the exceptional collections \cite[Equation (2.3)]{AKO06} reflect the structure of $X_d$ as blow-ups of $\pr 2$ in $9-d$ points, the ones we extract out of $(\widetilde{Y}_d, \widetilde{w_d})$ 
highlight the relation between $X_d$ and the representation theory of root systems of type $E_{9-d}$, in the spirit of \cite{DWZ,GHS13:matter}.
We expand on this below and in Section \ref{sec:GHS}.

Even without using that $(\Yt_d,\wt_d)$ is a categorical mirror, the existence of an isometry between $K_0(X_d) $ and $K_0(\widetilde{w}_d)$ is guaranteed by \cite[Theorem 5.1]{HarderThompson20}.

\subsection*{Sketch of the proof of Theorem \ref{thm:short-thm}}
We first determine the correct mutation sequence for $d=3$ by making use of the theory of surface-like pseudolattices \cite{Kuz17_pseudolattices, Vial17,VdB_dTdV}.
Rather than following the same approach for $d=2$, we directly derive the mutation sequence for $d=2$ from that of $d=3$: this is achieved by interpolating the local Weierstrass forms \eqref{eq:dP-def} for $d=3$ and $d=2$
and mutating the newly created vanishing cycle into the last position.
We obtain the mutation sequence for $d=1$ from that of $d=2$ by the same approach. This method views the three Weierstrass fibrations of the elliptic surfaces in Lemma \ref{lem:short-lemma} as algebraic deformations of one another, which
 reflects the fact that the three del Pezzo surfaces are blow-ups of one another. It can be viewed as an
algebraic version (for $d=1,2,3$) of the approach used in \cite[Section 3.3]{AKO06} to construct categorical mirrors to del Pezzo surfaces $X_d$ that are blow-ups of $\pr 2$ from the mirror of $\mathbb{P}^2$, and it is analogous to the \textit{radar screens} introduced in \cite{DKK12}.  

\subsection*{Motivation and applications}
Our focus on del Pezzo surfaces of degree $d \leq 3$ is motivated by two key reasons.

Firstly, this work is a step towards the goal of showing HMS for anticanonical log del Pezzo weighted hypersurfaces (classified in \cite[Theorem 8]{JK}). Indeed, del Pezzo surfaces of degree $\leq 3$ 
arise as the only smooth anticanonical del Pezzo weighted hypersurfaces.
Del Pezzo surfaces of degree $d=2$ are also the degenerate case ($k=0$) of the so-called Johnson--Koll\`ar series $X_{8k+4} \subset \mathbb{P}(2,2k+1, 2k+1, 4k+1)$, ($k\geq 1, k \in \mathbb{N}$). Cohomological mirrors $(Y_k,w_k)$ (where $Y_k$ is a surface) to the Johnson-Koll\`ar series  are built in \cite{ACGG}: in the case $k=0$ they coincide with the cohomological mirrors arising from the classical constructions (see Remark \ref{rem:jk}), but the construction of \cite{ACGG} is the only one known for general $k$. It will be the goal of a future study to show that they also define categorical mirrors. See \cite{GR22} for a study of the category of coherent sheaves on (the canonical stack of) $X_{8k+4}$.

Secondly, our focus on $d\leq 3$ is inspired by \cite{GHS13:matter}, which connects ADE lattices
to the Picard--Lefschetz theory 
of certain smooth deformations of ADE singularities.

In the language of this paper, the authors of \cite{GHS13:matter} view the  local equation 
\begin{equation*}
    y^2=x^3+a(z)x+b(z)
\end{equation*}
of an ADE singularity as a Weierstrass form defining a
genus-one fibration $Z \to D$ over a disc, with discriminant $z^N$ (here $N=\ell+1$ for $A_\ell$, while $N=\ell+2$ for $D_\ell$ and $E_\ell$),
and deform \eqref{eq:GHSSing} to a genus-one Lesfchetz fibration $\widetilde{q} \colon \widetilde{Z} \to D$ with $N$ critical values. 
To this fibration, the authors attach a rank $N$ lattice $\bfJ$ of \textit{string junctions}, together with a specific basis of $\bfJ$ consisting of Lefschetz thimbles. They then exhibit a copy of the ADE root lattice in $\bfJ$ which lies in the kernel of a homomorphism called the \textit{asymptotic charge} $c_{\widetilde{q}}$. Without performing an in-depth study of the exceptional $E_\ell$ cases, the authors then ask whether string junctions also encode the full structure of ADE algebras, including their weight lattices \cite[Section 3.5]{GHS13:matter}.

In Section \ref{sec:GHS}, we give convenient descriptions of the junction lattices and, in the E case,  answer (a version of) their question in the positive.
This is done by slightly enlarging $\bfJ$ and using a different pairing, called the \textit{Seifert pairing}, which is crucially non-symmetric. On the one hand, the Seifert pairing recovers the junction pairing of \cite{GHS13:matter} after symmetrization. On the other,  $\bfJ$, with the Seifert pairing, is isometric to $K_0(\widetilde{q})$. In the E case, this allows us in turn to interpret $\bfJ$ as a sublattice of $K_0(\widetilde{w}_d)$, $d=9-\ell$.
This corresponds to globalising the fibration of \cite{GHS13:matter}
with the LG model $(\widetilde{Y}_d,\widetilde{w}_d)$. It affords us the technology of surface-like pseudolattices, and, through Theorem \ref{thm:short-thm}, a direct comparison with $K_0(X_d)$ on the B-side. In particular, we relate $\bfJ$ with a category of matrix factorizations.

\begin{corollary}[{Corollary \ref{cor:MF}}]\label{conIntro:MF}
   For $d=1,2,3$ let $X_d$ be a del Pezzo surface of degree $d$, defined by a polynomial $h^{(d)}$ as a weighted projective hypersurface. Denote by $\widetilde{q}_{9-d}$ the fibration corresponding to $E_{9-d}$. We have an isometry of pseudolattices
   \[ K_0(\widetilde{q}_{9-d}) \simeq K_0(\mathrm{DGr}(h^{(d)})) \]
   where the latter is the category of graded matrix factorizations of $h^{(d)}$ of \cite[Section 3]{Orlov09_DerSing}. 
\end{corollary}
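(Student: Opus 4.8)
The plan is to factor the claimed isometry through the B-side lattice $K_0(X_d)$, applying Orlov's theorem on the algebraic side and, on the symplectic side, combining the junction-lattice description of Section~\ref{sec:GHS} with the mutation sequence of Theorem~\ref{thm:short-thm}.

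First I would record the input from \cite{Orlov09_DerSing}. In each of the three cases $X_d$ is realised as the weighted hypersurface $\{h^{(d)} = 0\}$ of Gorenstein parameter $-1$: for $d = 3, 2, 1$ the polynomial $h^{(d)}$ has degree $3, 4, 6$ in variables of weights $(1,1,1,1)$, $(1,1,1,2)$, $(1,1,2,3)$, so in every case the sum of the weights exceeds the degree by exactly $1$. Orlov's theorem then produces a semiorthogonal decomposition $D^b(\Coh(X_d)) = \langle \mathrm{DGr}(h^{(d)}), \cO_{X_d}\rangle$ with a single exceptional line bundle, and the fully faithful embedding $\mathrm{DGr}(h^{(d)}) \hookrightarrow D^b(\Coh(X_d))$ induces an isometry of $K_0(\mathrm{DGr}(h^{(d)}))$ onto the orthogonal $[\cO_{X_d}]^{\perp}$ for the Euler pairing $\chi$. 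Indeed $\cO_{X_d}$ is exceptional, so $\chi([\cO_{X_d}], -)$ is a surjective functional whose kernel, by the decomposition, is exactly the summand $K_0(\mathrm{DGr}(h^{(d)}))$. This realises $K_0(\mathrm{DGr}(h^{(d)}))$ as a corank-one sub-pseudolattice of $K_0(X_d)$, of rank $(12-d)-1 = 11-d$, matching the rank $N = (9-d)+2 = 11-d$ of the junction lattice.

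Next I would assemble the symplectic-side identifications. By Section~\ref{sec:GHS}, the junction lattice $\bfJ$ with the Seifert pairing is isometric to $K_0(\widetilde{q}_{9-d})$, and globalising the fibration with the LG model $(\Yt_d, \wt_d)$ realises $\bfJ$ as a corank-one sub-pseudolattice of $K_0(\wt_d)$: it is spanned by the $11-d$ vanishing cycles produced by the $E_{9-d}$-deformation, while the remaining vanishing cycle $\delta$ of $\wt_d$ created by the globalisation (accounting for the passage from $11-d$ to $12-d$ critical values) spans a complement. Theorem~\ref{thm:short-thm} supplies a sequence of numerical mutations taking the basis of $K_0(\wt_d)$ of Section~\ref{sec:construct-vc} to the basis of $K_0(X_d)$ of \cite{AKO06}; since numerical mutations act as pseudolattice isometries, this defines an isometry $\Phi \colon K_0(\wt_d) \xrightarrow{\sim} K_0(X_d)$.

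It remains to match the two corank-one sublattices, and this is where I expect the real work to lie. The cleanest route is to verify that $\Phi(\delta) = \pm[\cO_{X_d}]$ (the shift/sign ambiguity being irrelevant for orthogonals) and that $\delta$ plays on the A-side the role of $[\cO_{X_d}]$ on the B-side, i.e. $\bfJ = \delta^{\perp}$; then $\Phi(\bfJ) = [\cO_{X_d}]^{\perp} = K_0(\mathrm{DGr}(h^{(d)}))$. Concretely this is a finite check: one tracks $\delta$ and the generators of $\bfJ$ through the explicit mutation sequences \eqref{eq:mutationsFord1}, \eqref{eq:mutationsFord2}, \eqref{eq:mutationsFord3}, reads off their images in the \cite{AKO06} basis, and compares with the structure-sheaf class and its orthogonal, matching the Euler pairings using the vanishing-cycle data of Proposition~\ref{pro:coho-classes}. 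Composing the three isometries then yields $K_0(\widetilde{q}_{9-d}) \simeq \bfJ \simeq \Phi(\bfJ) = K_0(\mathrm{DGr}(h^{(d)}))$, as claimed.
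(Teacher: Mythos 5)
Your proposal is correct and is essentially the paper's own argument: the paper's Proposition \ref{pro:orthogonal} carries out precisely your matching step and then composes with Orlov's equivalence, and the ``real work'' you anticipate is in fact immediate rather than a finite computation, because the mutation sequences \eqref{eq:mutationsFord1}--\eqref{eq:mutationsFord3} never involve the $0$-th position, so the extra thimble $\delta$ is untouched by every mutation and $\Phi(\delta)=[\sO_{X_d}]$ on the nose. The one point to fix is the side on which you take orthogonals: Orlov's decomposition is $\langle \sO_{X_d},\, {}^\perp\sO_{X_d}\rangle$ with $\mathrm{DGr}(h^{(d)})\simeq{}^\perp\sO_{X_d}$, so at the level of K-theory $K_0(\mathrm{DGr}(h^{(d)}))=\{v:\chi(v,[\sO_{X_d}])=0\}$ rather than the kernel of $\chi([\sO_{X_d}],-)$, and correspondingly the triangularity of the Seifert pairing \eqref{eq:SeifertPairing} gives $\bfJ=\{v:\langle v,\delta\rangle_{\mathrm{Sft}}=0\}$, not $\{v:\langle \delta,v\rangle_{\mathrm{Sft}}=0\}$ (the latter contains none of the generating thimbles, whose pairing against $\delta$ on the left is $\pm1$). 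Since you flipped the convention consistently on both sides, the two discrepancies cancel---each of the two sublattices you name is the image of the correct one under the Serre operator---so your composed isometry is still the claimed one, but as literally written both intermediate identifications fail.
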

The isometry of Corollary \ref{conIntro:MF} is (essentially) given by the mutation sequences of Theorem \ref{thm:short-thm}.

Moreover, by working with the B-side counterpart of the asymptotic charge $c_{\wt_d}$ defined on $K_0(\widetilde{w}_d)$, we describe its kernel, generalize the construction on extended weights carried out in \cite{DWZ}, and construct a splitting of $K_0(\wt_d)_\Q$ in terms of $c_{\wt_d}$, analogously to \cite{DWZ}.

We denote by $(c_{\wt_d})_\Q$ the linear extension of $c_{\wt_d}$ to rational coefficients, and write $\bfp$ for a \textit{point-like} object of $K_0(\widetilde{w}_d)$.

\begin{proposition}[Corollary \ref{cor:kernel-dec} and Proposition \ref{pro:H-splitting}]
The kernel $\Ker(c_{\wt_d})$ decomposes as an orthogonal sum $\sfE_{\ell} \oplus \langle \bfp \rangle$. 
Moreover, $K_0(\widetilde{w}_d)$ admits a splitting 
\begin{equation*}
    %\label{eq:splitting}
    K_0(\widetilde{w}_d)_\Q \simeq \Ker((c_{\wt_d})_\Q) \oplus \im((c_{\wt_d})_\Q)
\end{equation*}
which, modulo $\bfp$, is orthogonal with respect to the Seifert pairing.    
\end{proposition}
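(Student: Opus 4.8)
The statement is really two assertions: an orthogonal decomposition of the kernel of $c_{\wt_d}$, and a splitting of $K_0(\wt_d)_\Q$ compatible with that kernel. The plan is to treat these in sequence, leaning on the explicit basis of vanishing cycles produced in Proposition \ref{pro:coho-classes} together with the surface-like pseudolattice structure on $K_0(\wt_d)$ recalled in Section \ref{sec:surfacelikePseudolatt-prelim}. Throughout, the asymptotic charge $c_{\wt_d}$ is the B-side homomorphism mirroring the construction of \cite{GHS13:matter}, and its kernel is expected to contain the $E_\ell$ root lattice (with $\ell = 9-d$) by the earlier identification of $\bfJ$ as a sublattice of $K_0(\wt_d)$.

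\textbf{Step 1: identifying the kernel.} First I would compute $c_{\wt_d}$ explicitly on the exceptional basis from Section \ref{sec:construct-vc}, reading off its matrix from the vanishing-cycle homology classes in $H_1(\wt_d^{-1}(0),\ZZ)$ given by \eqref{eq:vc-dP1}--\eqref{eq:vc-dP3}. Since $c_{\wt_d}$ is, by construction, the map recording the asymptotic charge, its kernel should be spanned by the classes whose vanishing cycles bound in the relevant sense; I would show that these span exactly a copy of $\sfE_\ell$ together with one extra generator, the point-like class $\bfp$. The orthogonality of the splitting $\Ker(c_{\wt_d}) \simeq \sfE_\ell \oplus \langle \bfp\rangle$ with respect to the Seifert pairing then follows from a direct computation: $\bfp$ is a point-like object, so its Seifert pairing against any root-lattice generator vanishes, which is precisely the defining numerical property of a point-like class in a surface-like pseudolattice. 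The fact that the restriction of the Seifert pairing to the $\sfE_\ell$-summand is (after symmetrization) the negative of the $E_\ell$ root form is what matches the junction-lattice computation of \cite{GHS13:matter}, and this is where Corollary \ref{conIntro:MF} and the isometry with $\bfJ$ feed in.

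\textbf{Step 2: the rational splitting.} For the second assertion I would extend $c_{\wt_d}$ linearly to $(c_{\wt_d})_\Q$ on $K_0(\wt_d)_\Q$. The splitting $K_0(\wt_d)_\Q \simeq \Ker((c_{\wt_d})_\Q)\oplus \im((c_{\wt_d})_\Q)$ is the rank--nullity decomposition, so the content lies in choosing a complement to the kernel that is realised inside $K_0(\wt_d)_\Q$ as the image, and in verifying the orthogonality claim modulo $\bfp$. Following the template of \cite{DWZ}, I would produce an explicit lift of $\im((c_{\wt_d})_\Q)$ spanned by classes dual to the extended-weight construction, then check that the Seifert pairing between a kernel class (other than $\bfp$) and an image class vanishes. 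Here one must be careful: the Seifert pairing is non-symmetric, so ``orthogonal'' has to be read as vanishing of the pairing in both orders, or one fixes a convention; the point-like vector $\bfp$ is explicitly excluded because its pairing against the image need not vanish, reflecting the degree/Euler-characteristic part of the asymptotic charge.

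\textbf{Main obstacle.} I expect the hard part to be Step 1, specifically pinning down that the kernel is \emph{exactly} $\sfE_\ell \oplus \langle \bfp\rangle$ and not larger, i.e. showing $\dim_\Q \Ker((c_{\wt_d})_\Q) = \ell+1$. This amounts to verifying that $c_{\wt_d}$ has the expected rank, which in turn requires a careful analysis of the explicit matrix of $c_{\wt_d}$ in the chosen basis and of how the point-like class interacts with the root lattice. The non-symmetry of the Seifert pairing complicates the orthogonality bookkeeping, and the role of $\bfp$ as a ``defect'' to orthogonality must be isolated cleanly; this is where the surface-like pseudolattice formalism of \cite{Kuz17_pseudolattices, Vial17, VdB_dTdV}, which governs how point-like and line-like classes pair, does the decisive work.
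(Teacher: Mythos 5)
Your route is genuinely different from the paper's. You propose to stay on the A-side: write the matrix of $c_{\wt_d}$ in the thimble basis using the boundary classes \eqref{eq:vc-dP1}--\eqref{eq:vc-dP3}, compute its kernel, and identify the lattice structure by hand, following the template of \cite{DWZ,GHS13:matter}. The paper instead moves everything to the B-side: by Theorem \ref{thm:MutationSequence} and \cite[Theorem 5.1]{HarderThompson20}, the asymptotic charge $c_{\wt_d}$ is identified with the restriction map $i^*\colon K_0(X_d)\to K_0^{\mathrm{num}}(C)$ to a smooth anticanonical curve; Proposition \ref{prop:KerIstar} then computes $\Ker(i^*)$ in one line (it consists of the rank-zero classes orthogonal to $\bfk$, i.e. $\sfE_\ell\oplus\langle\bfp\rangle$, by the very definition of $\sfE_\ell$ as $\bfk^\perp\subset \NS(X_d)$), and the splitting of Proposition \ref{pro:H-splitting} is exhibited on the explicit rational basis $([\sO_X],\bfk,\beta_1,\ldots,\beta_\ell,\bfp)$, whose Gram matrix \eqref{eq:GramKuz} is a Riemann--Roch computation. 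What the paper's route buys is that the kernel identification, including its integral structure and the orthogonality of the two summands, becomes formal; what your route buys is independence from the B-side transfer, at the price of a genuine lattice computation: you must verify, integrally and not just rationally, that the kernel of your explicit $2\times(\ell+3)$ matrix, equipped with the restricted Seifert form, is $\sfE_\ell$ plus a primitive null vector, a point your plan asserts rather than argues.

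Two local points need repair. First, your justification of the orthogonality $\pair{\bfp,\sfE_\ell}=0$ --- ``the defining numerical property of a point-like class'' --- is not correct as stated: a point-like object satisfies $\pair{\bfp,\bfp}=0$ and pairs symmetrically, but it pairs nontrivially with any class of nonzero rank, since $\pair{\bfp,v}=\rk(v)$ by \eqref{eq:defRank}. Indeed, the nonvanishing of $\pair{\bfp,[\sO_X]}$ is exactly why the splitting in the second assertion is only orthogonal \emph{modulo} $\bfp$. The correct argument is that kernel classes of the charge have rank zero (on the B-side, the $[\sO_C]$-coefficient of $i^*(v)$ is $\rk(v)$), hence lie in $\bfp^\perp$. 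Second, the step you single out as the main obstacle --- that the kernel has rank exactly $\ell+1$ --- is actually immediate: both $a$ and $b$ occur among the boundary classes in \eqref{eq:vc-dP1}--\eqref{eq:vc-dP3}, so $c_{\wt_d}$ surjects onto $H_1(\Sigma,\Z)\simeq\Z^2$ and rank--nullity gives the count. The real difficulty in your approach sits instead in the integral lattice identification described above.
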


\subsection*{Plan of the paper}
Section \ref{sec:B-elliptic-surfaces} describes cohomological mirrors to del Pezzo surfaces of degree at most three. General notions are recalled in Section \ref{sec:general}, while specific constructions for Fano weighted complete intersections appear in Section \ref{sec:B-nonproper}. Section \ref{sec:elliptic-surf} constructs the elliptic surfaces $F_d \colon S_d \to \pr 1$ and the associated LG models $(Y_d,w_d)$.
In Section \ref{sec:ConstructionLefschetzFibrations}, after recalling basic notions about Lefschetz fibrations (Section \ref{sec:prelim-Lf}), we construct the LG models $(\widetilde{Y}_d,\widetilde{w}_d)$ (Section \ref{sec:construct-Lf}), we show that they define categorical
mirrors (\ref{sec:geometry-cats}), and we exhibit an exceptional basis of vanishing cycles for $K_0(\widetilde{w}_d)$ (Section \ref{sec:construct-vc}).
Section \ref{sec:PseudolatticesAnd Mutations} begins with a review of the theory of surface-like pseudolattices (Section \ref{sec:surfacelikePseudolatt-prelim}). The rest of the Section is dedicated to Theorem \ref{thm:MutationSequence}  and its proof (Sections \ref{sec:thm} and \ref{sec:thmproof}), along with its applications to string junctions (Section \ref{sec:GHS}). 
Section \ref{sec:lifts} lifts the mutation sequence of Theorem \ref{thm:short-thm} to the categorical level.

\begin{ack*}
We thank Tom Ducat, Liana Heuberger, Wendelin Lutz, and Andrea Petracci
for exchanges on % mirrors to 
del Pezzo surfaces and their mirrors, and   
Ilaria Di Dedda and Matthew Habermann for conversations on symplectic aspects of HMS. 
We thank Antonella Grassi for insights on string junctions, and Luca Giovenzana, Johannes Krah, and Alan Thompson 
for discussing pseudolattice results with us. We thank Nick Sheridan for guidance on the results of Section \ref{sec:lifts}.
Finally, we are grateful to Arend Bayer and Michael Wemyss 
for 
exchanges and opinions on earlier drafts of this manuscript.
\end{ack*}

\subsection*{Open access} For the purpose of open access, the authors have applied a Creative Commons Attribution (CC:BY) licence to any Author Accepted Manuscript version arising from this submission.

\section{Cohomological mirrors and elliptic surfaces}
\label{sec:B-elliptic-surfaces}
This Section focuses on cohomological mirrors to del Pezzo surfaces of degree at most three.

We begin by outlining the notion of mirror symmetry used here (Section \ref{sec:general}), then review the standard constructions of cohomological mirrors to del Pezzo surfaces $X_d$ of degree $d \leq 3$ (Section \ref{sec:B-nonproper}).
These mirrors 
naturally extend to rational elliptic surfaces, the construction of which is detailed in Section \ref{sec:elliptic-surf}.

\subsection{Cohomological mirrors of Fano manifolds} \label{sec:general}
Let $X$ be a Fano manifold of dimension $n$. 
The \emph{quantum period} $G_X$ of $X$ is a power series of the form
$G_X(t)=\sum_{d=0}^\infty p_d t^d$, where $p_0=1$, $p_1=0$ and $p_d$, $d\geq 2$, is a certain genus-zero Gromov--Witten invariant of $X$: roughly speaking, $p_d$ counts the number of degree-$d$ rational curves on $X$ passing through a given point.
%%as in RMMLPS
It is a deformation invariant. 
The \emph{regularised quantum period} $\widehat{G}_X(t)=\sum_{d=0}^\infty d! p_d t^d$ satisfies regular polynomial differential operators over $\CC^\times$. 
We refer to \cite[Section 4]{CC} and \cite[Section B]{Quantum-P-3d} for details.

Let $(Y,w)$ be an LG model of dimension $k$ endowed with a complex structure, i.e. $Y$ is a complex $k$--dimensional manifold and $w$ is a holomorphic
function. % on $Y$. 
We say that $(Y,w)$ is a cohomological mirror to $X$
if there exists a period $\pi$ of $(Y,w)$ such that $\widehat{G}_X=\pi$.
Recall that periods of $(Y,w)$
are integrals of the form:
\begin{equation}
\label{eq:period}
\pi(t)= \int_\Gamma \frac{1}{1-tw} \Omega 
\end{equation}
where $\Gamma \in H_k(Y, \ZZ)$ 
and $\Omega \in H^0(Y, \Omega^k_Y)$ 
is such that $\int_\Gamma \Omega=1$.
We refer the reader 
to \cite{Carlson, Zagier} for a broader discussion of periods.

There is also a notion of quantum period for Fano orbifolds \cite[Section 3.3]{OP}, and the Fano--LG correspondence extends to this setting.  

\begin{example}
Consider the LG model $((\CC^\times)^k, f)$,   where $f$ is a Laurent polynomial in $k$ variables.  The period of $((\CC^\times)^k, f)$ corresponding to the choices \[\Gamma=\{|x_i|=1 \ \forall \ i=1, \dots, k \} \quad \Omega=\left( \frac{1}{2 \pi i} \right)^k \frac{dx_1 \cdots dx_k}{x_1 \cdots x_k}\] is  denoted by $\pi_f$ and called  \emph{the classical period of $f$} \cite[Definition 3.1]{CC}. Its power series expansion is  $\pi_f(t)=\sum_{d=0}^\infty c_0(f^d) t^d$, where $c_0(f^d)$ denotes the constant term of $f^d$. 
Given a Fano manifold $X$, one says that the Laurent polynomial $f$ is a (cohomological) mirror to  $X$ if $\widehat{G}_X=\pi_f$ \cite[Definition 4.9]{CC}. 
Being mirror to some $X$ is a very strong condition on $f$.
\end{example}
\smallskip

For certain Fano toric complete intersections $X$ the quantum period can be computed by the Quantum Lefschetz theorem \cite{Coates-Givental,Wang} 
%smooth implies conditions
and cohomological mirrors of the same dimension as $X$ can be built via the Givental/Hori--Vafa recipe \cite{GIV3, Hori-Vafa}.
Such mirrors have nonproper superpotential
and can be written in the form  $((\CC^\times)^n, f)$, with $n=\dim_\CC X$ and $f$ a  Laurent polynomial  mirror to $X$, 
by the Przyjalkowski method
(see \cite[Section 2]{CKPT}).

\begin{remark}
Del Pezzo surfaces can be realised as complete intersections in toric varieties to which the Givental/Hori--Vafa recipe applies. In particular, del Pezzo surfaces of degree $d \leq 3$ arise as weighted hypersurfaces (see Section \ref{sec:B-nonproper}).
\end{remark}

The setting just discussed is extended by the Fanosearch program, a conjectural framework aiming  to classify Fano varieties through their mirrors. It was first laid out in \cite{CC} and the main recent developements are collected in \cite{CKPT}. 
The program
suggests %more generally 
that any Fano manifold $X$ of dimension $n$ should admit a mirror Laurent polynomial $f$ in $n$ variables. 
If $f$ is mirror to $X$, then $X$ should admit a $\Q$-Gorenstein (qG) degeneration to the toric variety whose fan is the spanning fan of the polytope of $f$.  
Given a Laurent polynomial mirror to $X$, new mirror Laurent polynomials can be obtained from it by a process called \emph{mutation} (see \cite[Section 1]{CKPT}). Two Laurent polynomials are called \emph{mutation-equivalent} if they are  connected by a sequence of mutations.
Conjecturally, the converse also holds, that is, two Laurent polynomials $f$ and $g$ are mirror to the same $X$ if and only if they are mutation-equivalent. 
Moreover, mirrors to Fano manifolds are expected to be special Laurent polynomials known as \emph{rigid maximally mutable Laurent polynomials} (RMMLps), see \cite[Section 2]{CKPT} for the definition. 

Mirror RMMLps exist for all Fano manifolds up to dimension three, and every known mirror Laurent polynomial to a Fano manifold, of any dimension, is a RMMLp. 
Moreover, all conjectural statements discussed above have been proven for del Pezzo surfaces,
see \cite{Cor15} and \cite[Section 3]{CKPT} for details.

\subsection{Cohomological mirrors to the surfaces $X_d$}
\label{sec:B-nonproper}
This section is organised as follows. First we expand on the Givental/Hori--Vafa recipe and on the Przyjalkowski method for a weighted complete intersection. Then we apply these constructions
%give HV and Laurent polynomial mirrors 
to del Pezzo surfaces $X_d$ of degree $d=1,2,3$.

\subsubsection{Fano weighted complete intersections}
\label{sec:Fano-wci}
Let $X \subset \mathbb{P}(a_1, \dots, a_m)$ be a %smooth
quasismooth wellformed 
Fano weighted complete intersection of multidegree $(d_1, \dots, d_c)$ with canonical singularities. Let $n=m-1-c$ be the dimension of $X$. Let $\iota=\sum_{i=1}^{m} a_i -\sum_{i=1}^c d_i>0$ be the Fano index. The quantum period of $X$ satisfies:
\begin{equation}
\label{eq:G-I}
    G_X(t)=e^{-\alpha t} \cdot 
    \sum_{j=0}^{\infty}  \frac{\prod_{i=1}^c (d_i \cdot j)!}{\prod_{i=1}^m (a_i \cdot j)!} \; t^{\iota \cdot  j}
    %I_X (t)
\end{equation} 
where $\alpha$ is the unique rational number such that the right hand side has the form $1 +\mathcal{O}(t^2)$. 
%Note that the series $\widehat{I}_X$ .....

\subsubsection*{Givental/Hori--Vafa recipe} Assume now that $X$ is such that: 
\begin{equation}
\label{eq:HV-condition}
    \forall i \in \{1, \dots, c\} \ \exists \ S_i \subset \{1, \dots, m\} \ \text{such that} \  d_i=\sum_{j \in S_i} a_j \ \text{and} \ S_h \cap S_l=\varnothing \ \text{for} \  h \neq l
\end{equation}
Let $(\CC^\times)^m$ be an $m$-dimensional torus with coordinates $x_1, \dots x_m$. 
The Givental/Hori--Vafa   mirror (HV mirror) to $X$ is the LG model $(Y^\circ,v^\circ)$, where $Y^\circ$ is the $n$-dimensional manifold:
\begin{equation}
\label{eq:HV}
Y^\circ=\left(\prod_{i=1}^m x_i^{a_i}=1, \sum_{j \in S_i} x_j=1  \ \forall i=1, \dots, c\right) \subset (\CC^\times)^m
\end{equation} and $v^\circ \colon Y \to \CC$ is the map \begin{equation}
v^\circ(x_1, \dots, x_m)=
%\sum_{i=1}^m x_i -c=
\sum_{j \notin \cup_{i=1}^c S_i} x_j
\end{equation} 

The HV mirror is a cohomological mirror of $X$ after translation of the base by $\alpha$ \cite[Remark 2.9]{LInversion}. 
In other words, the LG model $(Y^\circ,w^\circ)$, where $w^\circ=v^\circ -\alpha$, is a cohomological mirror of $X$.

\subsubsection*{Przyjalkowski method} 
We recall the Przyjalkowski method in this setting. 

Assume that $X$ satisfies \eqref{eq:HV-condition} and let $(Y^\circ, v^\circ)$ be its HV mirror. 
For each $j \notin  \cup_{i=1}^c S_i$, introduce a new variable $y_j$. For 
    each $i \in \{ 1, \dots, c\}$, pick an element $s_i \in S_i$, set $y_{s_i}=1$, and introduce new variables $y_j, j \in S_i \setminus\{s_i\}$. Then
    set
\begin{equation}
\label{eq:P-method-yj}
 \begin{split}
& x_j=\frac{y_j}{\sum_{l \in S_i} y_l} \quad j \in S_i \\
&x_j=y_j  \quad \quad \quad \quad  j \notin \cup_{i=1}^c S_i
\end{split}
\end{equation} 
The first set of identities in \eqref{eq:P-method-yj} solve the constraints $\sum_{j \in S_i} x_j=1$ in \eqref{eq:HV}. Eliminate an additional variable $y_j, j \notin \cup_{i=1}^c S_i$, by the first equation in \eqref{eq:HV}. Then 
$v^\circ$ becomes a Laurent polynomial $g$ in $n=m-1-c$ variables,
and $\varpi_\alpha=\pi_{g-\alpha}$.

\begin{remark}
\label{rem:mutation-eq} 
The Laurent polynomial $g-\alpha$ is a mirror to $X$, thus it is expected to be mutation-equivalent to any Laurent polynomial mirror to $X$. 
\end{remark}

\begin{remark}
\label{rem:mutation-eq-S} 
The choice of sets $S_i$ in \eqref{eq:HV-condition} is not necessarily unique. If $g_1$ and $g_2$ are the Laurent polynomials obtained by the Przyjalkowski method from the two different choices of sets $S_i$, then (as a special case of the previous Remark) $g_1 -\alpha$ and $g_2-\alpha$ are expected to be mutation-equivalent.
\end{remark}

%\AAA{Nuova versione, con meno $k$.}
\subsubsection{The surfaces $X_d$}
\label{sec:Xd-HV}
Del Pezzo surfaces $X=X_d$ of degree $d=1,2,3$ can be written, respectively,  as weighted hypersurfaces of degree $6$ in $\mathbb{P}(1,1,2,3)$, of degree $4$ in  $\mathbb{P}(1,1,1,2)$, of degree $3$ in $\mathbb{P}^3$.\footnote{To avoid confusion, we emphasize that $d$ denotes the anticanonical degree $K^2_{X_d}$, while $d_1$ denotes the degree of the weighted homogeneous polynomial defining $X_d$ as a weighted hypersurface.}
In other words, we may apply the construction of Section \ref{sec:Fano-wci}, with $c=1$, $m=4$, and
\[
d_1,(a_1, a_2, a_3,a_4)= \begin{cases}
    6,(1,1,2,3) & d=1\\
    4,(1,1,1,2) & d=2\\
    3,(1,1,1,1) & d=3
\end{cases}
\]
In what follows, we consider the three cases together, suppressing the dependence on $d$ from the notation. 

In each case, 
the constant $\alpha$ in \eqref{eq:G-I} is  
$\alpha=\frac{d_{1}!}{a_{3}!a_{4}!}$ (that is, $\alpha=60, 12,6$ for $d=1,2,3$).
Condition \eqref{eq:HV-condition} is satisfied by 
\[d_{1}=a_{2}+a_{3}+a_{4}=1+a_{3}+a_{4}
\]  
thus the HV mirror to $X$ is  
the LG model $(Y^\circ , v^\circ)$ where 
\begin{equation}
Y^\circ=\left( x_1x_2x_3^{a_{3}}x_4^{a_{4}}=1, x_2+x_3+x_4=1\right) \subset (\CC^\times)^4
\end{equation} 
and $v^\circ \colon Y^\circ \to \CC$ is the map $v^\circ(x_1, x_2,x_3, x_4)=x_1$. 
The map $v^\circ$ has image  $\CC^\times \subset \CC$, and has a unique non degenerate critical 
point over the critical value 
$\lambda_0=
\frac{d_{1}^{d_{1}}}{{a_{3}}^{ a_{3}}  {a_{4}}^{a_{4}}}
$
(that is, $\lambda_0 =432, 64, 27$ for $d=1,2,3$).
The fibre of $v^\circ$ over $\lambda$ is the curve $Y^\circ_{\lambda}$:
\begin{equation}
\label{eq:HV-fiber}
\left( \lambda (1-x-y) x^{a_3} y^{a_4}-1=0 \right) \subset (\CC^\times)^2
\end{equation} 
with $x,y$ coordinates on $(\CC^\times)^2$.
For $\lambda  \neq \lambda_0$ the curve $Y^\circ_{\lambda}$ is a smooth curve of geometric genus $1$.

The Przyjalkowski method applied to the HV mirror above gives the Laurent polynomial
\begin{equation*}
    \label{eq:P-method-dP123}
g=\frac{(1+y_3+y_4)^{d_1}}{y_3^{a_3} y_4^{a_4}} 
\end{equation*}
The paper \cite[Figure 1]{Cor15}\footnote{Observe the typo in the bottom right polygon ($S_2^2$). The coefficients of the monomials corresponding to the lattice points in the interior should be equal to $8$.} exhibits a representative Laurent polynomial mirror $f$ to $X$. One can check that Laurent polynomials $g-\alpha$ and $f$ are mutation-equivalent.

\subsection{The associated elliptic surfaces}
\label{sec:elliptic-surf}

The mirrors to del Pezzo surfaces described above naturally give rise to rational minimal elliptic surfaces with section. After recalling a few basic notions about elliptic surfaces, we describe the specific elliptic surfaces that arise in our cases of interest.  

\subsubsection{Basic notions} 
\label{sec:basic-notions}
%include smooth, minimal, section %references only for facts
A smooth projective surface $S$ is an \emph{elliptic surface} if it supports a genus-$1$ fibration $f$ to a smooth projective curve $C$ (i.e. 
a surjective morphism $f \colon S \to C$ such that almost all fibres are smooth curves of genus 1). We say that 
$S$ is an elliptic surface over $C$, or
that $f \colon S \to C$ is an elliptic surface if we wish to specify 
that $C$ is the base curve, or 
that $f$ is the map.  
An elliptic surface $f\colon S \to C$ is \emph{minimal} if no fibre of $f$ contains a $(-1)$-curve. Given an elliptic surface $f \colon S \to C$, one can obtain a minimal one by contracting all $(-1)$-curves contained in fibres. This minimal model is unique up to isomorphism \cite[II.1]{Miranda-ES}. 
We say that  $f\colon S \to C$ is an \emph{elliptic surface with section} if a section of $f$ is given. 
The possible fibre types of a minimal elliptic surface with section are listed in \cite[1.4.1]{Miranda-ES}.

Elliptic surfaces with section admit a theory of Weierstass forms. 
Let $X$ be a projective surface and let $C$ be a smooth projective curve. A \emph{Weierstrass fibration} $\pi \colon X \to C$ is a flat proper map such that every fiber has arithmetic genus one, 
the general fibre is smooth, and a section of $\pi$ not passing through the singularities of any fibre is given. 
Isomorphism classes of Weierstrass fibrations over $C$ are in bijection with \emph{Weierstrass data} over $C$, that is triples $(L,A, B)$ where $L$ is a line bundle on $C$ and $A,B$ are global sections of $L^4, L^6$ such that $4A^3+27B^2$ (the discriminant) is not identically zero, up to isomorphism \cite[II.5]{Miranda-ES}.

There is an injection $F$ from minimal elliptic surfaces with section 
to Weierstrass fibrations (both considered up to isomorphism), given by contracting all components of fibers not meeting the section. There is also a surjection $G$ in the opposite direction, given by
taking the minimal resolution of the Weierstrass fibration.
While $G \circ F=\mathrm{id}$, there are plenty of Weierstrass fibrations not hit by $F$ \cite[II.3, III.4]{Miranda-ES}. A Weierstrass fibration is \emph{in minimal form} if it is hit by $F$.
One naturally wants to consider Weierstrass fibrations in minimal form, as they are in bijection with minimal elliptic surfaces with section. Weierstrass fibrations in minimal form correspond to Weierstrass data $(L,A,B)$ such that for all $c \in C$ $v_c(A)<4$ or $v_c(B)<6$, where $v_c(s)$ denotes the order of vanishing of $s$ at $c$ \cite[III.3.2]{Miranda-ES}, called Weierstrass data in minimal form. 
The fibre types of a minimal elliptic surface with section can be read off from the corresponding Weierstrass data in minimal form via Tate's algorithm, for which we refer the reader to \cite[Section 4.2]{SS}. 
\smallskip

In this paper we consider rational elliptic surfaces. If $S$ is rational, then $C\simeq \pr 1$ \cite[III.4.1]{Miranda-ES}. 
Given Weierstrass data $(\mathcal{O}_{\pr 1}(n),A,B)$ in minimal form,  dehomogenising $A,B$ 
yields a Weierstrass form 
\begin{equation} 
\label{eq:gm-wf}
y^2=x^3+a(\lambda)x+b(\lambda)
\end{equation} where $\lambda$ is a parameter on $\CC \subset \pr 1$, such that for all $c \in \CC$, $v_c(a)<4$ or $v_c(b)<6$. We call such a Weierstrass form \emph{globally minimal}. For a globally minimal Weierstrass form, let $n$ be the smallest integer such that $\deg a \leq 4n$ and $\deg b \leq 6n$. 
Homogenising $a,b$ to two-variable polynomials $A,B$ of degree $4n, 6n$ yields Weierstrass data $(\mathcal{O}_{\pr 1}(n),A,B)$ in minimal form\footnote{The term \emph{globally minimal} is suggestive of the fact that the corresponding Weierstrass data are automatically minimal at $\infty$, that is, $v_\infty(A)<4$ or $v_\infty(B)<6$.}.
Altogether, this defines a bijection between minimal ellitpic surfaces over $\pr 1$ with section and globally minimal Weierstrass forms.

A minimal elliptic surface $f \colon S \to \pr 1$ with section is rational if and only if $n=1$ \cite[III.4.6]{Miranda-ES}. In this case it is easy to see that a Weierstrass form is globally minimal if and only if the following two conditions hold:
\begin{enumerate}
    \item[(i)] $\deg a \leq 4, \deg b \leq 6$, and
    \item [(ii)] the discriminant $\Delta=4a^3+27b^2$ is not a twelfth power.
\end{enumerate}

\subsubsection{Construction}
\label{sec:construction-es}
One way to build the elliptic surface
is as follows. 
Let $X$ be a del Pezzo surface and let $f$ be a mirror Laurent polynomial to $X$. 
Let $P=\mathrm{Newt}(f)$, and let $Y_{P}$ be the toric variety with fan the normal fan of $P$. The polygon $P$ contains the origin  (this is a defining property of maximally mutable Laurent polynomials  (MMLPs) \cite[Definition 2.5]{CKPT}). 
Hence, both $f$ and the unit monomial determine sections of the
polarisation
of $Y_{P}$ defined by $P$ \cite[(4.2.7)]{cox2011toric}. One can check that resolving the singularities of $Y_{P}$ and the base locus of the rational map $Y_{P} \dasharrow \mathbb{P}^1$ defined by $[1:f]$
determines a rational elliptic surface with section. This also follows from a general result by Tveiten \cite[Theorem 3.13]{Tve18} about MMLps in dimension two  (see also \cite[Theorem 3.13]{CKPT}). 
Finally, one considers the corresponding  minimal elliptic surface.

\begin{remark}
If $X$ has very ample anticanonical divisor (that is, it has degree $d \geq 3$), then $P$ is a reflexive polygon, the corresponding polarisation of $Y_P$ is the toric boundary, and,  
 for a general $\mu \in \CC$, the closure of $(f=\mu)$ in $ Y_P$ is a smooth genus-one curve. In this context, the procedure described above is carried out in \cite{GGLP}. 
 For del Pezzo surfaces of degree $d=1,2$, the properties just mentioned do not hold.
 However, the procedure still applies. 
\end{remark}

For del Pezzo surfaces $X_d$ of degree $d=1,2,3$, we denote by $F_d \colon S_d \to \mathbb{P}^1$ the associated minimal elliptic surface. From now on, we will omit the subscript $d$ when the meaning is clear from the context.

The map $F$ has three singular fibres, over $\mu=-\alpha, \lambda_{0}-\alpha,\infty$ (with $\lambda_{0}, \alpha$ as in Section \ref{sec:B-nonproper}), whose type is specified in \eqref{eq:fibres}.
The elliptic surface  $F \colon S \to \mathbb{P}^1$ is extremal (that is, its Mordell-Weil lattice has rank zero), thus it is unique up to isomorphism \cite[Section 5]{miranda_persson}.

Note that the corresponding Weierstrass fibration in minimal form has a singularity of type $E_{\ell}$, where $\ell=9-d$, lying over $\mu=-\alpha$. 

\begin{remark}
The three elliptic surfaces we consider appear in \cite{KeatingCusp} 
as mirrors to the Milnor fibres of the three simple elliptic singularities.
In this work, a slightly different construction of $F \colon S \to \mathbb{P}^1$ is given. 
The result in \cite{KeatingCusp} is the categorical enhancement of the duality between $X$ and a mirror Laurent polynomial. 
\end{remark}

In what follows 
we exhibit a globally minimal Weierstrass form  for $F \colon S  \to \mathbb{P}^1$ about $\mu=-\alpha$ 
(that is, in \eqref{eq:gm-wf}, $\lambda=\mu+\alpha$). 
We extract the Weierstrass form from the HV mirror: we leverage the fact that the Newton polygon associated to the curve $Y^\circ_\lambda$ in \eqref{eq:HV-fiber}, which we denote by $Q$, has width $2$, allowing to present $Y^\circ_\lambda$ as a 2--1 branched cover of $\CC^\times$. A
similar computation is carried out with a different motivation in \cite[Section 3.2]{GMV} -- the
three elliptic surfaces arising as cases 1–3.\footnote{A Weierstrass form  for $F \colon S \to \mathbb{P}^1$ can also be read off from \cite[Table 5.2]{miranda_persson}, but we include the computation above to derive it directly from the mirror, as part of our goal to carry out the constructions explicitly.
} 

Write $M=\ZZ^2$, and let $Q \subset M_\mathbb{R}= \mathbb{R}^2$. %Let $\lambda\neq \lambda_0$. 
In each case, $Q$ has width $2$ with respect to the vector $(-1,1) \in M^\vee$. 
Choosing as basis for $M$ the dual vectors to $(1,0), (-1,1) \in M^\vee$, 
equation \eqref{eq:HV-fiber} rewrites as 
\begin{equation}
\label{eq:y-deg-2}
\lambda\left(1-\frac{x}{y}-y\right)\left(\frac{x}{y}\right)^{a_3}y^{a_4}-1=0
\end{equation}
Up to multiplying by $y$ when $d=3$, the left-hand side of \eqref{eq:y-deg-2} defines a degree-two polynomial in $\mathbb{C}(\lambda, x)[y]$. 
Take the discriminant of this polynomial (divided by $x^2$ when $d=1$) and consider the quadratic equation it defines:
\begin{equation}
\label{eq:y2-disc}
\begin{split}
    y^2&=-4 \lambda^2 x^3+\lambda^2 x^2 -4 \lambda \quad \  \ \text{for} \  d=1 \\
    y^2&= -4 \lambda^2 x^3+\lambda^2 x^2 -4\lambda x \quad \text{for} \ d=2 \\
    y^2&=-4 \lambda^2 x^3 +(-\lambda x+1)^2 \quad \text{for} \ d=3    
\end{split}
\end{equation}
The affine curves cut out of $\mathbb{C}^\times  \times \mathbb{C}$ (with variables $x,y$) by \eqref{eq:y-deg-2} and \eqref{eq:y2-disc} are isomorphic. Moreover, the closure of the affine curve given by \eqref{eq:y2-disc}  in $\mathbb{P}^2_{\mathbb{C}(\lambda)}$ is smooth. 
By a small manipulation 
(absorb the coefficient $-4\lambda^2$ in a new homogeneous variable, then compress the cubic),
\begin{comment}
Ok, quindi se capisco:

Prendiamo il discriminante, che in generale dà solo birazionale ma nel nostro caso ci dà proprio che l aperto su cui sono isomorfe è tutto tranne potenzialmente il bordo.

Quindi se la curva da cui partiamo era liscia fuori dal bordo, lo è anche quella discriminante.

Poi si controlla che la discriminante è liscia pure al bordo (sempre per lambda non lambda0), e allora la chiusura del discriminante makes us happy
\end{comment}
we obtain the  Weierstrass forms
\begin{equation}
\label{eq:Weierstrass-eqn}
\begin{split}
  y^2&=x^3 - \frac{1}{3}\lambda^4 x + \frac{2}{27}\lambda^6- 64\lambda^5  \qquad \qquad  \qquad \qquad \quad \ \text{for} \  d=1\\
y^2&=x^3 + \left(-\frac{1}{3}\lambda^4 + 16\lambda^3\right)x + \frac{2}{27}\lambda^6 - \frac{16}{3}\lambda^5  \qquad \quad  \ \text{for} \  d=2\\
y^2&=x^3 + \left(-\frac{1}{3}\lambda^4 + 8\lambda^3\right)x + \frac{2}{27}\lambda^6 - \frac{8}{3}\lambda^5 + 16\lambda^4 \quad  \text{for} \  d=3
  \end{split}
\end{equation}
\smallskip

In what follows we will write $p(\lambda, x)= x^3 +a(\lambda) x +b(\lambda)$ for the right hand side of \eqref{eq:Weierstrass-eqn}. Note that the discriminant $\Delta$ is given by  
\begin{equation}
\label{eq:discriminant}    \Delta(\lambda) = \lambda^{\ell+2}(\lambda - \lambda_0) \end{equation}
up to constants.

\begin{lemma}
\label{lem:Wf1}    Equations \eqref{eq:Weierstrass-eqn} yield globally minimal Weierstrass forms for the elliptic surfaces $F_d\colon S_d \to \pr 1$.
\end{lemma}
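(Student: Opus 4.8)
The plan is to prove the two assertions contained in the statement in turn: first that each equation in \eqref{eq:Weierstrass-eqn} is a globally minimal Weierstrass form, and then that the minimal elliptic surface it determines is $F_d$, i.e. that its singular fibres are exactly those listed in \eqref{eq:fibres}.

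For global minimality I would appeal to the criterion recalled in Section \ref{sec:basic-notions}. By inspection every $a(\lambda)$ in \eqref{eq:Weierstrass-eqn} has degree at most $4$ and every $b(\lambda)$ degree at most $6$, so the integer $n$ of \eqref{eq:gm-wf} equals $1$, the surface is rational, and global minimality is equivalent to the two conditions ``$\deg a\le 4,\ \deg b\le 6$'' and ``$\Delta$ is not a twelfth power''. The first is the degree bound just noted. For the second I would use the discriminant \eqref{eq:discriminant}, namely $\Delta=\lambda^{\ell+2}(\lambda-\lambda_0)$ up to a nonzero constant: the simple zero at $\lambda_0$ prevents $\Delta$ from being a twelfth power (and shows $\Delta\not\equiv 0$). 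As a direct check at the only finite point where the orders of $a,b$ are large, I would verify $v_0(b)=5<6$ for $d=1$ and $v_0(a)=3<4$ for $d=2,3$, confirming minimality at $\lambda=0$.

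To identify the surface I would read off the Kodaira types at $\lambda=0,\lambda_0,\infty$ with Tate's algorithm \cite[Section 4.2]{SS} and compare with \eqref{eq:fibres}. At $\lambda_0$ the discriminant has a simple zero, forcing type $I_1$. At $\lambda=0$ the triples $(v_0(a),v_0(b),v_0(\Delta))$ are $(4,5,10)$, $(3,5,9)$, $(3,4,8)$ for $d=1,2,3$, which are precisely the Tate signatures of $\mathrm{II}^\star$, $\mathrm{III}^\star$, $\mathrm{IV}^\star$, that is, the $E_{9-d}$ fibre. For the fibre at $\infty$ I would pass to the $j$-invariant: homogenising gives $v_\infty(\Delta)=12-\deg\Delta=d$, while $a^3/\Delta\sim\lambda^{d}$ yields $v_\infty(j)=-d<0$, so the reduction at $\infty$ is multiplicative of type $I_d$. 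The configuration $(E_{9-d},I_1,I_d)$ so obtained matches \eqref{eq:fibres}; since the surface is rational and extremal, the classification of extremal rational elliptic surfaces \cite[Section 5]{miranda_persson} identifies it with $F_d$. Alternatively, the identification is already contained in the derivation preceding the lemma, which presents the generic fibre of \eqref{eq:Weierstrass-eqn} as isomorphic to that of the HV mirror, whose associated minimal elliptic surface is $F_d$ by construction.

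I expect the main obstacle to be the bookkeeping in Tate's algorithm at the additive fibre over $\lambda=0$: one must check that the leading coefficients of $a$ and $b$ land the form in the correct branch of the algorithm, rather than in a neighbouring case sharing the same value of $v_0(\Delta)$, and that the form is already minimal there so that no further blow-up changes the type. A secondary subtlety, should one prefer the derivation-based identification, is to ensure that the ``small manipulation'' turning \eqref{eq:y2-disc} into \eqref{eq:Weierstrass-eqn} is an isomorphism of Weierstrass models over $\C(\lambda)$ carrying the chosen section to the zero section, and not merely a birational map of the underlying curves.
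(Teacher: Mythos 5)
Your proof is correct and takes essentially the same route as the paper: verify conditions (i)--(ii) of Section \ref{sec:basic-notions} for global minimality (hence rationality, since $n=1$), determine the fibre types at $0$, $\lambda_0$, $\infty$ by Tate's algorithm --- your signatures $(4,5,10)$, $(3,5,9)$, $(3,4,8)$ at $\lambda=0$ do land in the $\mathrm{II}^\star$, $\mathrm{III}^\star$, $\mathrm{IV}^\star$ branches --- and conclude by uniqueness of the extremal rational elliptic surface with this fibre configuration. The only cosmetic difference is at $\infty$, where you invoke the $j$-invariant ($v_\infty(j)=-d<0$) while the paper reads off the signature $(0,0,d)$ directly from $\deg a=4$, $\deg b=6$, $\deg\Delta=\ell+3$; both give $I_d$.
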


\begin{proof} 
Since the degrees of ${a}$ and ${b}$ do not exceed $4$ and $6$ and ${\Delta}$ is not a twelfth power (see conditions (i) and (ii) in Section \ref{sec:basic-notions}),  $y^2= {p}(\lambda,x)$ is a globally minimal Weierstrass form for a rational minimal elliptic surface.
By \eqref{eq:discriminant}, the singular fibres lie over $0, \lambda_0, \infty$. The fibre types are determined by Tate's algorithm.  
Since $\lambda_0$ is a simple root of $\Delta$, %and $a$ and $b$ do not vanish at $\lambda_0$, 
the fibre over $\lambda_0$ is of type $I_1$. Since $\deg{a}=4$, $\deg {b}=6$ and $\deg
{\Delta}=\ell+3$, the fibre over $\infty$ is of type $I_{d}$. 
For $d=1$, $v_0(a)=4$, $v_0(b)=5$, thus the fibre over $0$ is of type $\mathrm{II}^\star$. The other two cases are handled analogously. 
Then, the statement follows directly from uniqueness of $F_d \colon S_d \to \pr 1$.
\end{proof}

In what follows, we will consider the LG model $(Y,w)$ defined by:
\begin{equation}
\label{eq:Yw}
Y\coloneqq S \setminus F^{-1}(\infty) \qquad w\coloneqq F_{|Y}
\end{equation}
The LG model $(Y,w)$ is a cohomological mirror to $X$ with proper superpotential. 
Note that $w$ is not a Lefschetz fibration, %not isolated,  not degenerate
as the fibre of $w$ over $\mu=-\alpha$, which is non-reduced,
contains non-isolated critical points of $w$  (see the next Section \ref{sec:ConstructionLefschetzFibrations}).

\begin{remark}
\label{rem:jk}
Del Pezzo surfaces of degree $2$ are the degenerate case $(k=0)$ of the series of surfaces $X_{8k+4} \subset \mathbb{P}(2,2k+1,2k+1,4k+1)$ ($k>0$) \cite{JK}.  For these surfaces,  \cite{ACGG} constructs the only known cohomological mirrors $(Y_k, w_k)$ of dimension 2.
For $k=0$, the LG model $(Y_k,w_k)$ agrees with the standard cohomological mirror to $X_2$: the map $w_0$ is (after translation of the base by $\alpha_2$) the restriction over $\CC^\times$ of  $F_2 \colon S_2 \to \mathbb{P}^1$. 
\end{remark}

\section{Construction of Lefschetz fibrations}
\label{sec:ConstructionLefschetzFibrations}
In this Section, for $d=1,2,3$, we construct a rational minimal elliptic surface $\widetilde{F}_d \colon \widetilde{S}_d \to \pr 1$ whose fibres over $\CC$ are all of type $I_1$ and
whose fibre type over $\infty$ matches that of $F_d$. We then consider the LG model $(\widetilde{Y}_d, \widetilde{w}_d)$ obtained by restricting the fibration to $\CC$. We compute an associated collection of vanishing cycles, and determine their homology classes with respect to a fixed basis.  

In Section \ref{sec:geometry-cats}, we show that  $(\widetilde{Y}_d, \widetilde{w}_d)$ is diffeomeorphic to the categorical mirror to $X_d$ in \cite{AKO06}, defined by restricting to $\CC$  
an elliptic surface with the same fiber configuration as $\widetilde{F}_d$. This implies that $(\widetilde{Y}_d, \widetilde{w}_d)$ is a categorical mirror.

\subsection{Preliminaries on Lefschetz fibrations}\label{sec:prelim-Lf}

We begin by recalling some preliminary concepts.

\subsubsection{Vanishing cycles and Lefschetz thimbles} 
\label{sec:vc-th}
Let $M$ be a $\cC^\infty$ manifold of (real) dimension $2n$, and let $C$ be the Riemann sphere $S^2 \simeq \pr 1$ or an open subset of  $S^2$.  Let $w \colon M \to C$ be a \emph{Lefschetz fibration}, i.e. a  $\cC^\infty$ map with isolated non-degenerate critical points.

Let  $\lambda_\ast$ be a regular value of $w$, and write $\Sigma \coloneqq w^{-1}(\lambda_*)$. Let $p$ be a critical point of $w$, and let $\gamma \subset \CC$ be an arc joining $\lambda_*$ to the critical value $w(p)$. Using parallel transport (with respect to a chosen horizontal distribution) along $\gamma$, one obtains:
\begin{itemize}
\item a \textit{vanishing cycle} $L \subset \Sigma$, consisting of the set of points in $\Sigma$ which collapses to $p$ through parallel transport along $\gamma$. 
An isotopic change of parallel transport yields isotopic vanishing cycles, thus a well-defined homology class in $H_{n-1}(\Sigma,\Z)$ (up to a choice of orientation).
\item a \textit{Lefschetz thimble} $D$, consisting of  the set of points swept-out by $L$ under parallel transport along $\gamma$. By construction, $D$ is fibered above $\gamma$ and $\partial D=L$.
\end{itemize}

When the fibers of $w$ are not compact, parallel transport is not always well-defined. 
However, from now on, we always work with proper Lefschetz fibrations.

Denote by $p_1, \dots, p_r$ the critical points of $w$ and, for $i=1, \dots, r$, let $\lambda_i\coloneqq w(p_i)$. From now on we assume for simplicity that the %critical values 
$\lambda_i$ are distinct.\footnote{If this is not the case, one can reduce to this situation by interpreting a critical value with $l$ critical points over it as $l$ distinct critical values that are infinitesimally close, and choosing the corresponding arcs as small deformations of one another. In the proof of Theorem \ref{thm:MutationSequence} we will consider Lefschetz fibrations over $\pr 1$ with $d$  critical points over $\infty$ ($d=2,3$), and use this approach implicitly. }
We choose a collection of arcs $\gamma_1, \dots, \gamma_r$, such that, for $i=1,\ldots,r$, $\gamma_i$ joins $\lambda_*$ to  $\lambda_i$,  the $\gamma_i$ intersect only at $\lambda_*$, and they are ordered clockwise. For each $i$, we denote by $L_i, D_i$ the vanishing cycle and the Lefschetz thimble determined by $\gamma_i$. 
After a small perturbation, we can always assume that the 
intersections among the $L_i$ inside $\Sigma$ are transverse.
We write $k \coloneqq n-1$, and we denote by $l_i$ the homology class of $L_i$ in $H_{k}(\Sigma,\Z)$.

\subsubsection{Picard-Lefschetz formula}
\label{ssec:PicardLefschetz}
Associated to the fibration $w \colon M \to C$ is its monodromy representation 
\[
\rho\colon \pi_1(U,\lambda_*) \to \Aut(H_{k}(\Sigma, \CC))
\]
where $U\subset C$ is the regular locus of $w$.

For each $i$, let $\sigma_i$ be a small circle around $\lambda_i$ containing no other critical value, and define $\chi_i$ to be an element of $\pi_1(U,\lambda_*)$ obtained by following the arc $\gamma_i$ from $\lambda_*$ to its point of intersection with $\sigma_i$, going around $\sigma_i$ once in a clockwise direction, then back along $\gamma_i$. The local monodromy operator $\rho(\chi_i)$ is given by the Picard--Lefschetz formula
\begin{equation}\label{eq:PicardLefschetzGeneral}
\rho(\chi_i)(v)= v + (-1)^{\frac{(k+1)(k+2)}{2}}\pair{v,l_i}l_i 
\end{equation}
where $\pairing$ is the intersection pairing on $H_{k}(\Sigma, \ZZ)$.

When the fibres of $w$ have (real) dimension $2$, that is, $k=1$,  
the formula above becomes 
$\rho(\chi_i)(v)= v + \pair{l_i,v}l_i$
(where the intersection pairing $\pairing$ is so that
$\pair{u,v}=1$ if $u$ and $v$ intersect once in a clockwise direction).  
Note that, in a neighbourhood of $L_i$, $\rho(\chi_i)$ 
is the action on homology of a left Dehn twist with respect to $L_i$, which we denote $\tau^{-1}_{L_i}$ in agreement with other sources in the literature (e.g. \cite{Carlson}).\footnote{
In \cite[\S~4.2]{Carlson} 
the arcs $\gamma_i$ are ordered counterclocwise, 
one goes around $\sigma_i$ counterclockwise, and the intersection product $\langle \cdot, \cdot \rangle$ is opposite of ours. Taking this into account, the formula in \cite[Theorem 4.2.1]{Carlson} agrees with \eqref{eq:PicardLefschetzGeneral}.}
\smallskip

Formula \eqref{eq:PicardLefschetzGeneral} determines the (homology classes of the) vanishing cycles associated to a new ordered collection of arcs. More precisely, suppose that we swap
the order of two critical points. 
For example, swap $p_i$ and $p_{i+1}$, and modify $(\gamma_i,\gamma_{i+1})$ to  $(\gamma_i'=\gamma_{i+1},\gamma_{i+1}')$ or $(\gamma_{i}'',\gamma_{i+1}''=\gamma_i)$ as in Figure \ref{fig:OrderChange}.
Then the vanishing cycles $(L_i',L_{i+1}')$ corresponding to $(\gamma_i',\gamma_{i+1}')$ satisfy 
\[ L_{i}'= L_{i+1}, \qquad \quad   L_{i+1}'= \tau_{L_{i+1}}^{-1}(L_i) \]
and similarly the vanishing cycles $(L_i'',L_{i+1}'')$ associated to $(\gamma_i'',\gamma_{i+1}'')$ satisfy
\[ L_{i}''= \tau_{L_i} (L_{i+1}) \qquad\qquad L_{i+1}''=L_i \]

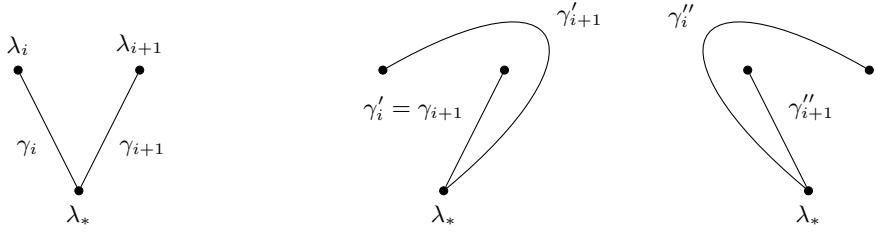
\begin{figure}[ht!]
\centering
\begin{tikzpicture}[scale=0.8, font=\footnotesize]
\node (*) at (0,0) [dot, label=below:{$\lambda_*$}]{};
\node (A) at (-1,2) [dot, label=above:{$\lambda_i$}]{};
\node (B) at (1,2) [dot, label=above:{$\lambda_{i+1}$}]{};

\draw (*) edge["{$\gamma_i$}"] (A);
\draw (*) edge[swap, "{$\gamma_{i+1}$}"] (B);

\node (*) at (6,0) [dot, label=below:{$\lambda_*$}]{};
\node (C) at (5,2) [dot, label=above:{}]{};
\node (D) at (7,2) [dot, label=above:{}]{};

\draw (*) edge[swap, "{$\gamma_{i+1}'$}", out=40, in=30, looseness=4] (C);
\draw (*) edge["{$\gamma_{i}'=\gamma_{i+1}$}"] (D);

\node (*) at (12,0) [dot, label=below:{$\lambda_*$}]{};
\node (C) at (11,2) [dot, label=above:{}]{};
\node (D) at (13,2) [dot, label=above:{}]{};

\draw (*) edge[swap,"{$\gamma_{i+1}''$}"] (C);
\draw (*) edge["{$\gamma_i''$}", out=140, in=150, looseness=4] (D);
\end{tikzpicture}
\caption{An example %where the order of two critical values is swapped. 
of a choice of $(\gamma_i',\gamma_{i+1}')$ and $(\gamma_i'',\gamma_{i+1}'')$ which swaps the order of two critical values.
}
\label{fig:OrderChange}
\end{figure}

We say that the collections of vanishing cycles 
\begin{equation*}
\begin{alignedat}{2}
    (L_1,\ldots, L_{i-1},& L_i,L_{i+1}, L_{i+2}\ldots, L_r)\\
    (L_1,\ldots, L_{i-1},& L_{i+1}, \tau_{L_{i+1}}^{-1}(L_i) , L_{i+2}, \ldots, L_r)
\end{alignedat}
\end{equation*}
are related by a right mutation at the $i$-th position. Similarly, the collections
\begin{equation*}
\begin{alignedat}{2}
    (L_1,\ldots, L_{i-1},& L_i,L_{i+1}, L_{i+2} \ldots, L_r)\\
    (L_1,\ldots, L_{i-1},& \tau_{L_i} (L_{i+1}), L_i, L_{i+2}, \ldots, L_r)
\end{alignedat}
\end{equation*}
are related by a left mutation at the $i$-th position.

Every collection $\{\gamma_i'\}$ can be obtained from $\{\gamma_i\}$ by a finite sequence of the moves described above \cite{Seidel_VanishingCycles}, %\cite[Sec. 2.5]{GHS13:matter},
thus the corresponding collections of vanishing cycles are related by a finite sequence of right or left mutations. %???

\subsubsection{Symplectic Lefschetz fibrations}
\label{sec:symplectic-Lf}
If $(M,\omega)$ is a symplectic manifold, a Lefschetz fibration $w \colon M \to C$ is called \textit{symplectic} if the fibers of $w$ are symplectic submanifolds of $M$.  
If $w \colon M \to C$ is a symplectic Lefschetz fibration, then there is a canonical horizontal distribution defined by the symplectic orthogonal to the fibers of $w$. This allows us to consider the $L_i$ as Lagrangian submanifolds of $\Sigma$, well-defined up to Hamiltonian isotopy. 
The symplectic structure is crucial in the definition of the (bounded) derived category of Lagrangian vanishing cycles. 

Recall that the directed category of Lagrangian vanishing cycles $\lagvc(w,\{ \gamma_i\})$ is an $A_\infty$-category (over a coefficient ring $R$) with objects $L_1,\ldots, L_r$ corresponding bijectively to the vanishing cycles (%which correspond bijectively to 
or to the Lefschetz thimbles). Morphisms are given by: % the Floer complex
\begin{equation}
    \Hom^*(L_i,L_j) = \begin{cases} CF^*(L_i,L_j; R) &\mbox{ if }i<j;\\
    \id[0]  &\mbox{ if }i=j;\\
    0 &\mbox{ otherwise,}
    \end{cases}
\end{equation}
where $ CF^*(L_i,L_j; R)=R^{|L_i\cap L_j|}$ is the Floer complex of $L_i$ and $L_j$,  and the operators $m_k$ ($k\geq 1$) are defined in terms of Lagrangian Floer homology within $w^{-1}(\lambda_*)$.  For the details of the construction, as well as aspects on the grading, we direct the reader to \cite{Seidel_VanishingCycles} and \cite[Section 4]{AKO06}. 

The category $\lagvc(w,\{\gamma_i\})$ depends on the ordered collection of arcs $\gamma_i$. A different collection $\{\gamma_i'\}$ produces a category which differs from $\lagvc(w,\{\gamma_i\})$ by a sequence of mutations (see Section \ref{ssec:PicardLefschetz}). Hence, the derived category $D^b\lagvc(w)$ only depends on the symplectic fibration $w$ and is independent of the choice of the ordered collection of arcs.
\smallskip

In this paper we will mostly focus on the topological data, which, in the case of genus-$1$ Lefschetz fibrations (that is, Lefschetz fibrations whose smooth fibers are $2$-tori), is sufficient to recover the 
Grothendieck group $K_0(D^b\lagvc(w))$.

In this situation, the relative homology group $H_2(M,\Sigma;\Z)$ is equipped with a bilinear form $\pairing_{\mathrm{Sft}}$, called the Seifert pairing in \cite[Definition 4.2]{HarderThompson20}, after a similar notion in singularity theory \cite[Section 2.3]{AGZVII}. 
By \cite[Theorem 2.5]{GHS16_junctions} $H_2(M,\Sigma;\Z)$ is generated by homology classes of Lefschetz thimbles, and the Seifert pairing can be written conveniently in terms of the intersection product on a reference fiber \cite[Section 4.2]{HarderThompson20}.  More explicitly, pick Lefschetz thimbles $D_i$ which give rise to a basis of $H_2(M,\Sigma;\Z)$. Then
\begin{equation}\label{eq:SeifertPairing}
    \pair{[D_i],[D_j]}_{\mathrm{Sft}} = \begin{cases}
        \pair{[\partial D_i],[\partial D_j]} &\mbox{for } i<j\\
        1 &\mbox{if }i=j\\
        0 &\mbox{otherwise,}
    \end{cases}
\end{equation} where $[D]$ and $[\partial D]$ denote the classes of $D$ and $\partial D$ in $H_2(M,\Sigma;\Z)$ and $H_1(\Sigma, \ZZ)$. 
The pairing extends by linearity to $H_2(M,\Sigma;\Z)$.

Since $D^b\lagvc(w)$ admits full exceptional collections, its Grothendieck group coincides with the numerical one. Moreover, it is simple to see that $K_0(D^b\lagvc(w))$, with the Euler pairing, is isometric to $H_2(M,\Sigma;\Z)$ with the Seifert pairing. We abbreviate $K_0(D^b\lagvc(w))$ by $K_0(w).$

\begin{remark} 
The pair $(H_2(M,\Sigma;\Z),\pairing_{\mathrm{Sft}})$ is a \textit{pseudolattice}, i.e. a finitely generated abelian group with a non-degenerate bilinear (not necessarily symmetric) form. We give more detail about pseudolattices in Section \ref{sec:PseudolatticesAnd Mutations}. 
\end{remark}

\subsection{Construction of Lefschetz fibrations}
\label{sec:construct-Lf} 
We now return to the elliptic surfaces $F \colon S \to \pr 1$ and the LG models $(Y,w)$ introduced in Section \ref{sec:construction-es} (continuing to omit the subscript $d$ when it is clear from the context).
As previously noted, the LG models $({Y},{w})$ do not define Lefschetz fibrations. 
We construct associated LG models  $(\widetilde{Y}, \widetilde{w})$ that define Lefschetz fibrations by perturbing Equations \eqref{eq:Weierstrass-eqn} as follows. 
\smallskip

As before, write $\ell \coloneqq 9-d$, and let $p(\lambda, x)= x^3 +a(\lambda) x +b(\lambda)$ be the right hand side of \eqref{eq:Weierstrass-eqn}. 
The critical values of $w$ are the zeros of the discriminant $\Delta$ in \eqref{eq:discriminant}. 
Consider a perturbation \[\widetilde{p}(\lambda,x)=x^3 +\widetilde{a}(\lambda) x +\widetilde{b}(\lambda)\]  of $p(\lambda, x)$ where $\widetilde{a}, \widetilde{b} \in \CC[\lambda]$ satisfy $\deg\widetilde{a}=\deg a, \deg \widetilde{b}=\deg b$, and the corresponding discriminant $\widetilde{\Delta}$ is a separable polynomial of the same degree as $\Delta$\footnote{In physics language, this operation \textit{Higgses the gauge group}, by making the roots of $\widetilde{\Delta}$ near $0$ have discrete symmetries, see \cite[Equation~(1.1)]{GHS13:matter}.}.
Then:

\begin{lemma}
\label{lem:lemma} 
The equation $y^2= \widetilde{p}(\lambda,x)$ is a globally minimal Weierstrass form for a rational minimal elliptic surface $\widetilde{F}\colon \widetilde{S}\to \pr 1$ with $\ell + 3$ fibres of type $I_1$ over $\C$ and a fibre  of type $I_d$ at $\infty$.
\end{lemma}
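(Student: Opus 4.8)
The plan is to deduce everything from the characterisation of globally minimal Weierstrass forms recalled in Section~\ref{sec:basic-notions}, so that no separate geometric construction of $\widetilde{S}$ is needed: the surface is, by the bijection of Section~\ref{sec:basic-notions}, the minimal elliptic surface associated to the globally minimal form $y^2=\widetilde{p}(\lambda,x)$, and its minimality and rationality will follow automatically once global minimality is established. The fibre types are then read off from Tate's algorithm \cite[Section~4.2]{SS}.

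First I would verify conditions (i) and (ii) of Section~\ref{sec:basic-notions}. By hypothesis $\deg\widetilde{a}=\deg a$ and $\deg\widetilde{b}=\deg b$, and from \eqref{eq:Weierstrass-eqn} one reads off $\deg a=4$, $\deg b=6$ in each case; hence (i) holds, the associated integer is $n=1$, and the surface is rational. For (ii), the discriminant $\widetilde{\Delta}=4\widetilde{a}^3+27\widetilde{b}^2$ (up to a constant) is by assumption separable of positive degree $\ell+3$, so it has a simple root and cannot be a twelfth power. Therefore $y^2=\widetilde{p}(\lambda,x)$ is a globally minimal Weierstrass form and defines a rational minimal elliptic surface $\widetilde{F}\colon\widetilde{S}\to\pr 1$.

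Next I would determine the fibre types. Over $\C$, separability produces $\ell+3$ distinct roots of $\widetilde{\Delta}$, at each of which the order of vanishing of $\widetilde{\Delta}$ is $1$; since $v(\widetilde{\Delta})=1$ forces multiplicative reduction of type $I_1$ in Tate's algorithm, these account for the $\ell+3$ fibres of type $I_1$. At $\infty$, homogenising $\widetilde{a},\widetilde{b}$ to sections of $\mathcal{O}(4),\mathcal{O}(6)$ gives $v_\infty(\widetilde{a})=4-\deg\widetilde{a}=0$ and $v_\infty(\widetilde{\Delta})=12-\deg\widetilde{\Delta}=12-(\ell+3)=d$. Since $v_\infty(\widetilde{a})=0$ the reduction at $\infty$ is multiplicative, and Tate's algorithm yields type $I_{v_\infty(\widetilde{\Delta})}=I_d$. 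As a consistency check, the Euler numbers of the singular fibres sum to $(\ell+3)\cdot 1+d=12$, the Euler characteristic of a rational elliptic surface, confirming that no singular fibre has been overlooked.

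The step requiring the most care is the analysis at $\infty$: one must track the orders of vanishing after homogenisation and verify $v_\infty(\widetilde{a})=0$, which is exactly what forces multiplicative (type $I_d$) rather than additive reduction. This persistence of the $I_d$ fibre is precisely what the degree hypotheses $\deg\widetilde{a}=\deg a$, $\deg\widetilde{b}=\deg b$, $\deg\widetilde{\Delta}=\deg\Delta$ guarantee. By contrast, the qualitative change relative to $F_d$ occurs over $\C$, where the imposed separability of $\widetilde{\Delta}$ breaks the additive ($E$-type) fibre of $F_d$ over $0$ into simple, hence $I_1$, fibres. Everything else is routine bookkeeping with the minimality criterion and the Kodaira--Tate table.
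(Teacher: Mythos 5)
Your proof is correct and follows essentially the same route as the paper: the paper's own proof simply refers back to that of Lemma \ref{lem:Wf1}, which likewise verifies conditions (i) and (ii) of Section \ref{sec:basic-notions} and then reads off the fibre types via Tate's algorithm ($I_1$ at the simple roots of $\widetilde{\Delta}$, and $I_d$ at $\infty$ from $\deg\widetilde{a}=4$, $\deg\widetilde{b}=6$, $\deg\widetilde{\Delta}=\ell+3$). Your write-up merely makes explicit the order-of-vanishing bookkeeping at $\infty$ and the Euler-number check that the paper leaves implicit.
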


\begin{proof}
The proof is almost identical to that of Lemma \ref{lem:Wf1}. 
\end{proof}

We define $(\widetilde{Y}, \widetilde{w})$ as the LG model 
\begin{equation}
\label{eq:TildeYw}
\widetilde{Y} \coloneqq \widetilde{S} \setminus \widetilde{F}^{-1}(\infty) \qquad \widetilde{w} \coloneqq \widetilde{F}_{|\widetilde{Y}}
\end{equation}
By construction, the map  $\widetilde{w} \colon \widetilde{Y} \to \CC$ is a Lefschetz fibration. More precisely, it is a genus-one proper Lefschetz fibration with $\ell+3$ fibres with a single node.  

The explicit choice we will work with is: 
\begin{equation}
     \label{eq:dP-def} 
    \widetilde{p}(\lambda,x) =  p(\lambda,x) + \epsilon x 
\end{equation}
for a fixed $0 < |\epsilon| \ll 1$. The corresponding discriminant $\widetilde{\Delta}$ has $\ell+2$ roots close to $0$ and one root close to $\lambda_0$.
\smallskip

We end the section with a few remarks on our construction and its relation to \cite{AKO06}. 

\begin{remark}
\label{rem:necessary}
It is crucial to start from a globally minimal Weierstrass form in order to produce, via a perturbation 
as above, a rational elliptic surface:
a Weierstrass form that is not globally minimal would perturb to a globally minimal Weierstrass form whose associated elliptic surface is not rational.
Moreover, the conditions on
$\widetilde{p}$ 
  are
  strictly necessary to produce a globally minimal Weierstrass form whose associated elliptic surface is as in Lemma \ref{lem:lemma}. This follows directly from Tate's algorithm. \end{remark}

\begin{remark}
\label{rem:morsification}
The perturbation we perform %in Equation \eqref{eq:dP-def} 
yields an analog of a Morsification of the (non isolated) singularities of $w^{-1}(0)$. We refer to \cite[Section 3.3]{Dimca_SingularitiesAndTopology} for details on Morsifications, and, for example, \cite{KeatingTori} for an application of Morsification techniques.
\end{remark}

\begin{remark}
\label{rem:bifibration}
The map $\widetilde{w}\colon \widetilde{Y}\to \C$ has the structure of a Lefschetz bifibration \cite[III.(15e)]{Seidel}. In other words, we have a factorization
\begin{equation}
\begin{tikzcd}[ampersand replacement=\&]
    \widetilde{Y} \arrow[r,"Q"] \arrow[d,swap,"{\widetilde{w}}"] \& \C \times \pr 1 \arrow[dl,"\mathrm{pr}_1"] \\
      \C  \&
\end{tikzcd}
\end{equation}
where $Q=(\lambda,x)$ with $\lambda$ and $x$ as in Equation \eqref{eq:Weierstrass-eqn} ($x$ is regarded as a variable on $\pr 1$), and $\mathrm{pr}_1$ is projection onto the first factor, such that the restriction of $Q$ to a regular fiber ${\Sigma}=\widetilde{w}^{-1}(\lambda_\star)$ is a Lefschetz fibration. More precisely, ${\Sigma}$ is %(punctured) 
a smooth elliptic curve and ${q}\coloneqq Q_{|{\Sigma}} \colon {\Sigma} \to \pr 1$ is a $2\; \colon1$ cover of $\pr 1$ branched at four distinct points (the three roots $\widetilde{p}(\lambda_\star,x)$, and $\infty$). 
\end{remark}

\subsection{Categorical mirrors, anticanonical pairs and diffeomeorphism of LG models}
\label{sec:geometry-cats}

The paper \cite{AKO06} constructs categorical mirrors $(M_\ell, W_\ell)$ to del Pezzo surfaces  that are blowups of $\pr 2$ in  $\ell$ points, that is all del Pezzo surfaces apart from $\pr 1\times\pr 1$, with the degree $d$ of the surface given by $d=9-\ell$.

The LG model $(M_\ell, W_\ell)$ is defined by restriction to $\CC$ of a rational elliptic surface $\overline{W}_\ell \colon \overline{M} \to \pr 1$ with $\ell+3$ fibres of type $I_1$ over $\CC$ and a fibre of type $I_{9-\ell}$ at $\infty$. Hence, for $d=1,2,3$, $\overline{W}_\ell$ has the same fibre configuration as $\widetilde{F}_d$. 

For each $\ell$, $\overline{W}_\ell \colon \overline{M} \to \pr 1$ is built out of the rational elliptic surface $W_0 \colon \overline{M} \to \pr 1$ extending the Laurent polynomial mirror to $\pr 2$. The map $\overline{W}_0$ has $3$ fibres of type $I_1$ over $\CC$ and a fibre of type $I_9$ over $\infty$, and the map $\overline{W}_\ell$ is obtained by perturbing $\overline{W}_0$ in such a way that 
$\ell$ of the critical points originally over $\infty$
are displaced to 
$\ell$ distinct finite critical values.

We say that two LG models $(N,p)$ and $(N',p')$ are diffeomorphic if there are diffeomorphisms $N \xrightarrow{\sim}N'$ and $\C \xrightarrow{\sim}\C$ intertwining $p$ and $p'$.
Changing the choice of $\overline{W}_\ell$ does not modify the diffeomorphism class of $(M_\ell, W_\ell)$ \cite[Section 3.3]{AKO06}.

We now compare $(M_\ell,W_\ell)$ with $(\Yt_d,\wt_d)$. We will show that, suitably choosing symplectic forms, one has the following.
\begin{proposition}\label{prop:EquivalenceYtildeAko}
There is an equivalence $D^b\lagvc(\wt_d) \simeq D^b\lagvc(W_\ell)$.
\end{proposition}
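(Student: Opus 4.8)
The plan is to prove Proposition \ref{prop:EquivalenceYtildeAko} by showing that the two LG models $(\widetilde{Y}_d,\widetilde{w}_d)$ and $(M_\ell,W_\ell)$ are diffeomorphic as LG models (in the sense made precise just above the statement), and then invoking the fact that $D^b\lagvc$ of a symplectic Lefschetz fibration depends only on the symplectic fibration up to the appropriate equivalence. The key structural observation, already recorded in the excerpt, is that both fibrations arise as restrictions to $\C$ of \emph{rational minimal elliptic surfaces with section} whose singular fibres are $\ell+3$ nodal fibres of type $I_1$ over $\C$ together with a single fibre of type $I_{9-\ell}=I_d$ over $\infty$. Thus $\widetilde{F}_d$ (Lemma \ref{lem:lemma}) and $\overline{W}_\ell$ share the \emph{same singular fibre configuration}, and both are extremal rational elliptic surfaces.

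First I would appeal to the uniqueness theory for extremal rational elliptic surfaces. An extremal rational elliptic surface (Mordell--Weil rank zero) is determined up to isomorphism by its configuration of singular fibres; this is exactly the classification of \cite{miranda_persson} already cited in the construction of $F_d\colon S_d\to\pr 1$. Since both $\widetilde{S}_d$ and $\overline{M}$ are rational minimal elliptic surfaces with section and identical fibre type over each critical value, they are isomorphic as elliptic surfaces over $\pr 1$; in particular there is a biholomorphism $\widetilde{S}_d\xrightarrow{\sim}\overline{M}$ covering an automorphism of the base $\pr 1$ that carries $\infty$ to $\infty$ and the finite critical values of $\widetilde{w}_d$ to those of $W_\ell$. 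Removing the fibre over $\infty$ from each side and restricting, this biholomorphism descends to a diffeomorphism $\widetilde{Y}_d\xrightarrow{\sim}M_\ell$ intertwining $\widetilde{w}_d$ and $W_\ell$ up to a diffeomorphism of $\C$, so the two LG models are diffeomorphic in the required sense. A minor technical point to handle is that the finite critical values need not sit in the same position in $\C$, but this is absorbed by the allowed diffeomorphism of the base, and it matches the remark from \cite{AKO06} that the diffeomorphism class of $(M_\ell,W_\ell)$ is independent of the perturbation chosen.

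The remaining and genuinely delicate step is to upgrade a diffeomorphism of LG models to an equivalence of the \emph{Fukaya--Seidel-type} categories $D^b\lagvc$, which a priori depend on a symplectic (in fact Liouville/exact) structure and not merely on the smooth topology. The plan is to choose the symplectic forms on $\widetilde{Y}_d$ and $M_\ell$ so that the above diffeomorphism is (isotopic to) a symplectomorphism intertwining the two Lefschetz fibrations. Concretely, I would take symplectic forms whose cohomology class is Poincar\'e dual to the section removed at $\infty$ — precisely the choice flagged for the lift in Section \ref{sec:lifts} — so that both sides carry the structure of an exact symplectic Lefschetz fibration with the same fibre (a four-punctured torus / elliptic curve) and the same vanishing-cycle data up to Hamiltonian isotopy. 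Since the biholomorphism already matches the fibration structures, it matches the distinguished collections of Lefschetz thimbles up to the mutations induced by reordering the critical values (Section \ref{ssec:PicardLefschetz}); an $A_\infty$ quasi-equivalence of directed categories then follows from the invariance of $D^b\lagvc(w)$ under such mutations and under deformation of the symplectic form within a suitable class.

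I expect the main obstacle to be exactly this symplectic upgrade: the isomorphism produced by the elliptic-surface classification is holomorphic and hence respects the smooth fibration, but verifying that, for a well-chosen pair of symplectic forms, it is compatible with the horizontal distributions (symplectic orthogonals to the fibres) used to define parallel transport and vanishing cycles requires a deformation/Moser-type argument, controlling the symplectic parallel transport near the fibre at infinity and across the $I_d$ degeneration. The cleanest route is to invoke that for genus-one Lefschetz fibrations with a fixed fibre and a prescribed configuration of vanishing cycles, the Fukaya--Seidel category is determined by the topological data together with the grading and the choice of symplectic class, so that matching fibre configurations forces an equivalence; I would phrase the conclusion through this principle rather than constructing the symplectomorphism by hand, deferring the sharper symplectic comparison to the dedicated treatment in Section \ref{sec:lifts}.
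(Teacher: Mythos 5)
Your second step (fixing a symplectic form on $M_\ell$, pulling it back through the fibration-preserving diffeomorphism, and concluding the equivalence of categories) matches the paper's proof. But your first step contains a genuine error: the surfaces $\widetilde{S}_d$ and $\overline{M}$ are \emph{not} extremal, so the Miranda--Persson uniqueness theorem does not apply to them. By the Shioda--Tate formula, a rational elliptic surface with $\ell+3$ fibres of type $I_1$ and one fibre of type $I_d$ has Mordell--Weil rank $10 - 2 - (d-1) = 9-d = \ell \geq 6$. Extremality (and hence uniqueness up to isomorphism) holds for the \emph{unperturbed} surfaces $F_d \colon S_d \to \pr 1$ of Section \ref{sec:elliptic-surf}, whose fibre over the origin is of type $\mathrm{II}^\star$, $\mathrm{III}^\star$ or $\mathrm{IV}^\star$; you have transferred that property to the perturbed surfaces of Lemma \ref{lem:lemma}, where it fails. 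Indeed $\widetilde{S}_d$ and $\overline{M}$ are in general not isomorphic as elliptic surfaces: an isomorphism would have to cover an automorphism of $\pr 1$ matching the $\ell+4 \geq 10$ critical values, and their cross-ratios are different for generic perturbations (the critical values of $\widetilde{w}_d$ cluster near $0$ and $\lambda_0$, while those of $W_\ell$ sit near the three critical values of the mirror of $\pr 2$ and near $\infty$). So no biholomorphism exists, and your fibration-preserving diffeomorphism is never produced.

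What is true — and what the paper uses — is that the two LG models are \emph{diffeomorphic}, not biholomorphic: in the terminology of \cite[Remark 4.7]{HarderThompson20} both $(\widetilde{Y}_d,\widetilde{w}_d)$ and $(M_\ell,W_\ell)$ are quasi-LG models with the same invariant $n = \ell+3 = 12-d$, and \cite[Corollary 4.17]{HarderThompson20} then gives a diffeomorphism intertwining the two fibrations. (Alternatively, one can argue via deformation equivalence of the anticanonical pairs $(\widetilde{S}_d, \widetilde{F}_d^{-1}(\infty))$ and $(\overline{M}, \overline{W}_\ell^{-1}(\infty))$, as in the discussion of \cite{Loo81, Friedman15} following the proposition, combined with Ehresmann's theorem; but this too only yields a diffeomorphism.) Once this substitution is made, your conclusion goes through exactly as in the paper: pulling back a suitable symplectic form on $M_\ell$ to $\widetilde{Y}_d$ makes the diffeomorphism a symplectomorphism of Lefschetz fibrations, and the categories $D^b\lagvc(\widetilde{w}_d)$ and $D^b\lagvc(W_\ell)$ agree; no separate Moser-type argument or invariance principle beyond independence of the choice of arcs is needed, since the symplectic structure is transported rather than compared.
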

In particular $(\Yt_d,\wt_d)$ is a categorical mirror to $X_d$, by \cite[Theorem 1.4]{AKO06}.

\begin{proof}
In the terminology of \cite[Remark 4.7]{HarderThompson20}, the two LG models $(\Yt_d, \wt_d)$ and $(M_\ell, W_\ell)$ determine quasi-LG models (with $n=12-d=\ell+3$), hence by \cite[Corollary 4.17]{HarderThompson20} they are diffeomorphic. 
Fixing a suitable symplectic form on $M_\ell$ and its pullback on $\Yt_d$ via the above diffeomeorphism, we obtain the desired equivalence of categories.
\end{proof}

It is also interesting to consider the LG models we study through the lens of anticanonical pairs.

Consider a rational minimal elliptic surface $S$ %with a section 
with a fibre $D$ of type 
$I_{d}$, with $0< d\leq 9$.
The pair $(S,D)$ is an
anticanonical pair of length $d$ in the sense of \cite[Section 1]{Friedman15} where, for $d=1$, $D$ is irreducible and $D^2=0$, while, for $d>1$, each component of $D$ has self-intersection $-2$.
For $d \neq 8$, any two such pairs are deformation equivalent by \cite[Chapter 1]{Loo81} and \cite[Proposition 9.15-16]{Friedman15}.
In our setting, the  elliptic surfaces $F_d \colon S_d \to \pr 1$, $\widetilde{F}_d \colon {S}_d \to \pr 1$, as well as the elliptic surfaces $\overline{W}_{9-d} \colon \overline{M} \to \pr 1$ in \cite{AKO06},
are rational, minimal, and have a fibre of type $I_d$ at $\infty$. Hence, they define deformation equivalent anticanonical pairs of length $d$. 
By Ehresmann's fibration theorem this implies, in particular, that $Y_d$, $\widetilde{Y}_d$, as well as the manifold $M_{9-d}$ in \cite{AKO06}, are diffeomorphic.

We emphasise that the results \cite{Loo81, Friedman15} are non-constructive and rely on lattice-theoretic methods. Moreover, even in the context of our application, the fibres of the deformation are a priori anticanonical pairs rather than elliptic surfaces. An anticanonical pair as above
defines an elliptic surface if and only if $\sO_D(D)\simeq \sO_D$, as in the proof of \cite[Theorem 9.14]{Friedman15}. 

A systematic study of HMS for  anticanonical pairs (also called \textit{log Calabi--Yau surfaces}) 
is conducted in  \cite{KeatingCusp, HackingKeating}.

\begin{example}
    \label{rem:def-Wf-es} 
From the perturbation \eqref{eq:dP-def} one can obtain a deformation of $W\colon S \to \pr 1$ to $\widetilde{W}\colon \widetilde{S} \to \pr 1$, without appealing to the above general theory. More precisely, we can construct a family of elliptic surfaces where $S$ and $\widetilde{S}$ appear as fibers, and where all other fibers have the same singular fiber types as $\widetilde{S}$. This is done as follows.

Let $B$ be a small disc with coordinate $b$. The equation $y^2 = p(\lambda,x) + b x$ gives rise to a family of Weierstrass fibrations $g\colon \sZ \to B$. The surface $\sZ_0 \coloneqq g^{-1}(0)$ is the Weierstrass fibration of $S$. 
A fiber $\sZ_b \coloneqq g^{-1}(b)$ with $b \neq 0$ is the Weierstrass fibration of an elliptic surface with the same singular fiber types as $\widetilde{S}$. In particular, for $b=\epsilon$, $\sZ_b$ is the Weierstrass fibration of $\widetilde{S}$. 

 For $d=2,3$, the threefold $\sZ$ has a line of $A_{d-1}$ singularities, given by the union of the $A_{d-1}$ points over $\infty$ of the fibers $\sZ_\epsilon$. These can be simultaneously resolved by blowups. Let $\sZ^\prime$ be the resulting threefold. For $d=1$, define $\sZ' = \sZ$ (no blowup is needed, as the fibers of $\sZ$ are already smooth everywhere except for the $E_6$ singularity in $\sZ_0$).
For $d=1,2,3$, the fibers of the family $\sZ' \to B$ are all smooth, except for $\sZ_0'$ which has a singular point of type $E_{9-d}$.

By a result of Brieskorn \cite{Brieskorn}, as formulated in \cite[Theorem 4.43]{KM98}, there exists a finite surjective base change $B' \to B$ such that $\sZ'_{B'}\to B'$ admits a simultaneous resolution, i.e. a projective morphism $p:\sS \to \sZ'_{B'}$ such that the composition $h\colon \sS \to \sZ'_{B'} \to B'$ is a flat family over $B'$ and $p$ restricts to the minimal resolution above each $b'\in B'$.
The map $h$ is the desired family. 
Note that for $d=1,2$ a base change is necessary by \cite{Katz} since $\sZ'$ has an $A_4$ singularity.

\end{example}

\begin{remark}\label{rem:Y-mirror}
In order to obtain an equivalence of categories 
$\FS(Y_d,w_d) \simeq D^b\lagvc(\wt_d)$, one could apply the fact that two LG models $(N,p)$ and $(N',p')$ produce equivalent Fukaya--Seidel categories if there exists a diffeomorphism $\psi \colon N \to N'$, two discs $\Delta,\Delta' \subset \C$ containing all critical values of $p$, respectively $p'$, and a diffeomeorphism $\phi \colon \C\setminus \Delta \to \C \setminus \Delta'$ such that $\phi \circ p = p' \circ \psi$ where these maps are defined. 

The deformation of Example \ref{rem:def-Wf-es} yields a diffeomeorphism $\psi \colon Y \xrightarrow{\sim} \Yt$ by Ehresmann's theorem.
%We omit all the necessary details, but i
It is not hard to find a similar deformation interpolating $(Y,W)$ and a Lefschetz fibration equivalent to $(\Yt,\wt)$ (or equivalently to $(M_\ell,W_\ell)$, by Proposition \ref{prop:EquivalenceYtildeAko}) in a way that $\Delta$, $\Delta'$, $\psi$ and $\phi$ as above exist. 
\end{remark}

\subsection{Construction of vanishing cycles}
\label{sec:construct-vc}

Let $\widetilde{w} \colon \widetilde{Y} \to \CC$ be the Lefschetz fibration built in Section \ref{sec:construct-Lf}. 
The group $H_2(\widetilde{Y}, {\Sigma};\Z)$, where ${\Sigma}$ is a smooth fiber of $\widetilde{w}$, comes equipped with the Seifert pairing \eqref{eq:SeifertPairing}. 

In this section, we compute the Gram matrix of the Seifert pairing with respect to a choice of $\Sigma$ and a basis of Lefschetz thimbles $D_i$, proceeding as follows. First, we choose a reference fibre $\Sigma=\widetilde{w}^{-1}(\lambda_\star)$. Next, we order the $\ell+3$ critical values $\lambda_i$ of $\widetilde{w}$, and we choose arcs $\gamma_i$ as described in \S~\ref{sec:vc-th}. Then, we use the fact that $\Sigma$ is a $2\; \colon1$ branched cover of $\pr 1$ (see Remark \ref{rem:bifibration}) to identify the vanishing cycles $L_i$ associated to the arcs $\gamma_i$ as the double lifts of certain arcs $\delta_i \subset \C \subset \pr 1$. Each $\delta_i$ is determined by two roots of $\widetilde{p}(\lambda, x)$ which collide when moving from $\lambda_\star$ to 
$\lambda_i$.\footnote{This is a common procedure when working with bifibrations. It is used for example in \cite[Section 4]{AKO06}.} Finally, we choose a basis of $H_1(\Sigma, \ZZ)$ and we use formula \eqref{eq:SeifertPairing}.
\medskip

We choose as reference fiber $\Sigma \coloneqq \widetilde{w}^{-1}(0)$, which (for each $d$) is determined by the local Weierstass form:
\[ y^2 = x^3 + \epsilon x \]

There is a critical value of $\widetilde{w}$ which is a small perturbation of ${\lambda}_0$. Slightly abusing notation,  we still denote it by ${\lambda}_0$. We order the $\ell+3$ critical values $\lambda_i$ starting from ${\lambda}_0$ and sweeping the plane clockwise (see Figure \ref{fig:critval6} for the case $d=3$). We choose the arcs $\gamma_i$ to be straight-line segments joining $\lambda_\star=0$ and $\lambda_i$.
 
Using computer algebra, we determine the two roots of $\widetilde{p}(\lambda,x)$ that collide when $\lambda$ moves along $\gamma_i$ and plot their trajectory, which defines the arc $\delta_i$. In Figure \ref{fig:homologyOfDeltai}, we choose $\epsilon \in \R_+$ and plot the three roots of $\widetilde{p}(0,x)=x^3+\epsilon x$, given by $0,\pm  \sqrt{-\epsilon}$, along with all the arcs that will arise as one of the $\delta_i$.

We choose a basis of $H_1(\Sigma,\Z)$ as follows. Let $a$ be one of the two (disjoint) cycles in the preimage of a small loop around $0$ and $-\sqrt{-\epsilon}$, and $b$ one of the two cycles in the preimage of a small loop around $0, \sqrt{-\epsilon}$, chosen so that $\pair{a,b}=-1$ (see Figure \ref{fig:Basisab}). By construction we have
\begin{equation}\label{eq:intEllCurve}
     \pair{a,a}=\pair{b,b}=0 \qquad \pair{a,b}=-\pair{b,a}=-1. 
\end{equation}

In Figure \ref{fig:homologyOfDeltai}, we write next to each arc the homology class of the corresponding vanishing cycle with respect to the chosen basis $a,b$.

\begin{figure}[ht!]
\tikzset{every picture/.style={line width=0.75pt}} %set default line width to 0.75pt        
\captionsetup{width=.48\textwidth}
\noindent
\minipage{0.48\textwidth}%
\centering
\begin{tikzpicture}[x=0.75pt,y=0.75pt,yscale=-.7,xscale=.7,font=\footnotesize]
%uncomment if require: \path (0,300); %set diagram left start at 0, and has height of 300

%Shape: Circle [id:dp27823626312354] 
\draw   (76.96,239.21) .. controls (76.83,237.01) and (78.52,235.12) .. (80.73,234.99) .. controls (82.93,234.87) and (84.82,236.56) .. (84.94,238.76) .. controls (85.07,240.97) and (83.38,242.86) .. (81.18,242.98) .. controls (78.97,243.11) and (77.08,241.42) .. (76.96,239.21) -- cycle ;
%Shape: Circle [id:dp8196291652719697] 
\draw   (76.52,181.39) .. controls (76.3,179.19) and (77.91,177.23) .. (80.11,177.02) .. controls (82.31,176.8) and (84.27,178.41) .. (84.48,180.61) .. controls (84.7,182.81) and (83.09,184.77) .. (80.89,184.98) .. controls (78.69,185.2) and (76.73,183.59) .. (76.52,181.39) -- cycle ;
%Shape: Circle [id:dp03955477234120586] 
\draw   (75.68,135.18) .. controls (75.58,132.97) and (77.29,131.1) .. (79.5,131) .. controls (81.71,130.9) and (83.58,132.61) .. (83.68,134.82) .. controls (83.78,137.02) and (82.07,138.89) .. (79.86,138.99) .. controls (77.65,139.09) and (75.78,137.38) .. (75.68,135.18) -- cycle ;
%Straight Lines [id:da7508459467956833] 
\draw  [dash pattern={on 4.5pt off 4.5pt}]  (79.5,31) -- (79.33,64) ;
%Shape: Ellipse [id:dp03805851396772919] 
\draw  [color={rgb, 255:red, 208; green, 24; blue, 2 }  ,draw opacity=1 ] (78.85,264.16) .. controls (66.15,264.16) and (56.12,239.77) .. (56.45,209.69) .. controls (56.77,179.6) and (67.33,155.21) .. (80.03,155.21) .. controls (92.73,155.21) and (102.76,179.6) .. (102.44,209.68) .. controls (102.11,239.77) and (91.55,264.15) .. (78.85,264.16) -- cycle ;
\draw  [color={rgb, 255:red, 208; green, 24; blue, 2 }  ,draw opacity=1 ] (98.12,208.1) -- (102.42,198.54) -- (106.04,208.38) ;
%Curve Lines [id:da572988088937383] 
\draw [color={rgb, 255:red, 74; green, 144; blue, 226 }  ,draw opacity=1 ]   (81.56,196.97) .. controls (118.76,192.1) and (112.37,89.02) .. (79.42,97.5) ;
%Curve Lines [id:da7887447001163369] 
\draw [color={rgb, 255:red, 74; green, 144; blue, 226 }  ,draw opacity=1 ] [dash pattern={on 4.5pt off 4.5pt}]  (81.56,196.97) .. controls (43.5,200) and (45.23,95.68) .. (79.42,97.5) ;
\draw  [color={rgb, 255:red, 74; green, 144; blue, 226 }  ,draw opacity=1 ] (109.34,126.21) -- (106.42,136.28) -- (101.46,127.04) ;
%Straight Lines [id:da10680557715797712] 
\draw    (80.73,184.99) -- (80.73,234.99) ;
%Straight Lines [id:da6064062496018618] 
\draw    (79.33,64) -- (79.5,131) ;

% Text Node
\draw (117,207) node [anchor=north west][inner sep=0.75pt]   [align=left] {$a$};
% Text Node
\draw (119,125) node [anchor=north west][inner sep=0.75pt]   [align=left] {$b$};
\end{tikzpicture}
\caption{  \label{fig:Basisab} The chosen basis for $H_1(\Sigma,\Z)$. The black segments are branch cuts. The $a$ loop lives entirely on one sheet of the cover, while the dashed part of the $b$ loop is meant to be on the second sheet.}
\endminipage\hfill
\minipage{0.48\textwidth}
\centering
\begin{tikzpicture}[font=\footnotesize,scale=0.64]
%uncomment if require: \path (0,300); %set diagram left start at 0, and has height of 300
\node (F+) at (0,2) [dot, label=above:{$\sqrt{-\epsilon}$}]{};
\node (F0) at (0,0) [dot, label=left:{$0$}]{};
\node (F-) at (0,-2) [dot, label=below:{$-\sqrt{-\epsilon}$}]{};

\draw (F-) edge["{$a$}"] (F0);
\draw (F+) edge["{$b$}"] (F0);
\draw (F-) edge[bend left=60, "{$a+b$}"] (F+);
\draw (F-) edge[bend right=60, swap, "{$a-b$}"] (F+);

\end{tikzpicture}
\caption{  \label{fig:homologyOfDeltai} Every arc $\delta_i$ appearing in the computation is isotopic to one of the arcs above. Next to each arc is the homology class of its lift to $\Sigma$.} 
\endminipage
\end{figure}

Following this procedure for each $i$, we find the corresponding arc $\delta_i$ and obtain the following.
\begin{proposition}
\label{pro:vc}
 Up to choosing an orientation, the cohomology classes of the vanishing cycles $L_i$, for $i=0, \dots, \ell+2$, are as follows:
\begin{align}
     \label{eq:vc-dP1}
    &a+b,a,b,a,b,a,b,a,b,a,b & \mbox{for } d=1\\
    \label{eq:vc-dP2}
    &a+b,a,b,a,a,a-b,b,b,a,b &  \mbox{for } d=2\\
    \label{eq:vc-dP3}
&a+b,b,a,b,a,b,a,b,a & \mbox{for } d=3
\end{align}
\end{proposition}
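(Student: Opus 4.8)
The plan is to make completely explicit the bifibration picture set up in Remark \ref{rem:bifibration} and carry out the root-tracking computation that the proposition summarizes. The fibration $\widetilde{w}\colon \widetilde{Y}\to\CC$ factors through $Q=(\lambda,x)$, so each regular fiber $\Sigma=\widetilde{w}^{-1}(\lambda_\star)$ is the smooth elliptic curve presented as a double cover $q\colon\Sigma\to\pr 1$ branched over the three roots of $\widetilde{p}(\lambda_\star,x)$ together with $\infty$. The key structural fact I would invoke is the standard principle for Lefschetz bifibrations (see \cite[III.(15e)]{Seidel} and \cite[Section 4]{AKO06}): a critical point of $\widetilde{w}$ over $\lambda_i$ arises precisely when two of the branch points (roots of $\widetilde{p}(\lambda,\cdot)$) collide as $\lambda$ travels along $\gamma_i$, and the associated vanishing cycle $L_i\subset\Sigma$ is the lift to the double cover of an arc $\delta_i\subset\CC$ joining those two colliding roots. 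Thus the entire computation reduces to tracking, for each $i$, which pair of roots collides and where the connecting arc $\delta_i$ sits relative to the branch cuts defining the basis $a,b$ of $H_1(\Sigma,\ZZ)$.

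First I would fix the reference fiber $\Sigma=\widetilde{w}^{-1}(0)$, cut out by $y^2=x^3+\epsilon x$, whose branch points are $0,\pm\sqrt{-\epsilon}$, and pin down the basis $a,b$ exactly as in Figure \ref{fig:Basisab}, recording the intersection relations \eqref{eq:intEllCurve}. The crucial dictionary, illustrated in Figure \ref{fig:homologyOfDeltai}, is the assignment of a homology class in $H_1(\Sigma,\ZZ)$ to each isotopy type of arc $\delta_i$ between two of these three branch points: an arc between $0$ and $-\sqrt{-\epsilon}$ lifts to $\pm a$, an arc between $0$ and $\sqrt{-\epsilon}$ lifts to $\pm b$, and the two arcs between $\pm\sqrt{-\epsilon}$ (passing on either side of $0$) lift to $\pm(a+b)$ and $\pm(a-b)$. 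Each homology class is only well-defined up to sign, which is exactly the orientation ambiguity acknowledged in the statement. Once this dictionary is in place, the content of the proposition is the claim that, for the specific ordering of critical values chosen (starting at $\lambda_0$ and sweeping clockwise) and the straight-line arcs $\gamma_i$, the induced sequence of $\delta_i$-classes is the one written in \eqref{eq:vc-dP1}, \eqref{eq:vc-dP2}, \eqref{eq:vc-dP3}.

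The substance of the proof is then a case-by-case root-tracking analysis for $d=1,2,3$, carried out with computer algebra as the text indicates. For each critical value $\lambda_i$ I would numerically solve $\widetilde{\Delta}(\lambda)=0$ to locate the $\ell+3$ critical values, order them by the clockwise sweep, and then follow the three roots of $\widetilde{p}(\lambda,x)$ as $\lambda$ moves along the segment $\gamma_i$ from $0$ to $\lambda_i$, detecting which two roots coalesce at the discriminant zero. Plotting the trajectory of the merging pair yields the arc $\delta_i$ up to isotopy rel branch points, and comparing it to the four model arcs of Figure \ref{fig:homologyOfDeltai} reads off the homology class. The outer critical value $\lambda_0$ is special: it is the simple root of $\widetilde{\Delta}$ near $\lambda_0$, far from the cluster near the origin, and one checks that the colliding pair there produces the class $a+b$ that heads each sequence. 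The remaining $\ell+2$ values cluster near $0$, where the perturbation $\epsilon x$ has split a single degenerate fiber into nodes, and their arcs alternate (or nearly alternate) between the $a$ and $b$ types, with the single anomalous $a-b$ entry in the $d=2$ case \eqref{eq:vc-dP2} reflecting the $I_2$ versus $I_1/I_3$ distinction in the fiber data \eqref{eq:fibres}.

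I expect the main obstacle to be controlling the isotopy class of each $\delta_i$ relative to the branch cuts, rather than merely identifying the colliding pair of roots. Two arcs joining the same pair of branch points can lift to classes differing by $\pm(a+b)$ versus $\pm(a-b)$ depending on which side of the third branch point they pass, so the homology class is genuinely sensitive to the global shape of the root trajectory, not just its endpoints. Because the critical values near the origin are tightly clustered and the roots wind as $\lambda$ traverses the $\gamma_i$, verifying that each computed trajectory is isotopic to the asserted model arc requires care that the path does not cross a branch cut in an unexpected way. This is precisely why the argument is done numerically and why the conclusion is stated only up to a choice of orientation; the sign ambiguities are harmless for the downstream pseudolattice computation, but the unsigned class must be pinned down correctly, and doing so rigorously is the delicate point. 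The alternating patterns in \eqref{eq:vc-dP1} and \eqref{eq:vc-dP3} serve as a useful internal consistency check, since any misidentification of a branch-cut crossing would break the expected alternation.
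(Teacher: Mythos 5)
Your proposal is correct and follows essentially the same route as the paper: the authors likewise exploit the bifibration structure of Remark \ref{rem:bifibration} to identify each vanishing cycle with the double lift of the arc $\delta_i$ traced by the two colliding roots of $\widetilde{p}(\lambda,x)$, track those roots with computer algebra along the straight-line arcs $\gamma_i$, and read off the classes from the dictionary of Figure \ref{fig:homologyOfDeltai} (with Example \ref{ex:vc3} spelling out the $d=3$ case). Your emphasis on controlling the isotopy class of $\delta_i$ relative to the branch cuts — in particular distinguishing $a+b$ from $a-b$ — is exactly the delicate point the paper's plotted trajectories are meant to resolve.
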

\smallskip

Using \eqref{eq:SeifertPairing} and \eqref{eq:intEllCurve}, one obtains the Gram matrix of the Seifert pairing on $H_2(\widetilde{Y},\Sigma;\Z)$ with respect to the  Lefschetz thimbles corresponding to the $L_i$. We spell out one of these computations for $d=3$ in Example \ref{ex:vc3} below.

\begin{example}[$d=3$]\label{ex:vc3}
Consider $\wt=\wt_3$. There are 9 critical values, arranged as in Figure \ref{fig:critval6}. Consider for example the critical value $\lambda_1$. The roots $0$ and $\sqrt{-\epsilon}$ of $\widetilde{p}$ collide as $\lambda$ varies along $\gamma_1$. Their trajectories define a path $\delta_1$. 
We draw this in Figure \ref{fig:vancycle6}. The lift of $\delta_1$ to the fiber $\Sigma$ has homology class $b$, as in Figure \ref{fig:homologyOfDeltai}.

\begin{figure}[ht!]\tikzset{every picture/.style={line width=0.75pt}} %set default line width to 0.75pt        
\captionsetup{width=.48\textwidth}
\noindent
\minipage{0.48\textwidth}%
\centering
\begin{tikzpicture}[font=\footnotesize,scale=0.8]
\node (N) at (0,3) []{};
\node (E) at (5,0) []{};
\node (S) at (0,-3) []{};
\node (W) at (-3,0) []{};

\draw (S) edge[->] (N);
\draw (W) edge[->] (E);

\node (0) at (4,0) [dot, label=above:{$\lambda_0$}]{};
\node (1) at (2,-1) [dot, label=right:{$\lambda_1$}]{};
\node (2) at (1,-2) [dot,label=right:{$\lambda_2$}]{};
\node (3) at (-1,-2.5) [dot, label=below:{$\lambda_3$}]{};
\node (4) at (-2.5,-0.8) [dot, label=below:{$\lambda_4$}]{};
\node (5) at (-2.5,0.8) [dot, label=above:{$\lambda_5$}]{};
\node (6) at (-1,2.5) [dot, label=above:{$\lambda_6$}]{};
\node (7) at (1,2) [dot, label=right:{$\lambda_7$}]{};
\node (8) at (2,1) [dot, label=right:{$\lambda_8$}]{};

\node (origin) at (0,0) [wdot]{};

\draw (origin) edge[dashed,"{$\gamma_1$}"] (1);

\end{tikzpicture}
\caption{  \label{fig:critval6} The critical values of the fibration $\wt$. In evidence is the path $\gamma_1$ joining the origin to the critical value $\lambda_1$.}
\endminipage\hfill
\minipage{0.48\textwidth}
\centering
\smallskip

\begin{tikzpicture}[font=\footnotesize,scale=0.74]
%uncomment if require: \path (0,300); %set diagram left start at 0, and has height of 300
\node (F+) at (0,2) [dot, label=above:{$\sqrt{-\epsilon}$}]{};
\node (F0) at (0,0) [dot, label=left:{$0$}]{};
\node (F-) at (0,-2) [dot, label=right:{$-\sqrt{-\epsilon}$}]{};
\node (F') at (-0.6,-2.5) []{};

%\draw (F-) edge["{$a$}"] (F0);
\draw (F+) edge[bend left=15, "{$\delta_1$}"] (F0);
\draw (F-) edge[bend right=10] (F');

\end{tikzpicture}
\bigskip

\bigskip

\caption{  \label{fig:vancycle6} Trajectories of the zeros of $\widetilde{p}(\lambda,x)$ as $\lambda$ varies along the path $\gamma_1$ in Figure \ref{fig:critval6}.} 
\endminipage
\end{figure}

The Gram matrix of the Seifert pairing on $H_2(\Yt,\Sigma,\Z)$ in the basis of thimbles corresponding to \eqref{eq:vc-dP3} is
{
\begin{equation}
\label{eq:gram-3}
M =
\begin{pmatrix}
1&-1&1&-1&1&-1&1&-1&1\\
0&1&1&0&1&0&1&0&1\\
0&0&1&-1&0&-1&0&-1&0\\
0&0&0&1&1&0&1&0&1\\
0&0&0&0&1&-1&0&-1&0\\
0&0&0&0&0&1&1&0&1\\
0&0&0&0&0&0&1&-1&0\\
0&0&0&0&0&0&0&1&1\\
0&0&0&0&0&0&0&0&1
\end{pmatrix}
\end{equation}
}
\smallskip

\end{example}

\section{Pseudolattices and Mutations}\label{sec:PseudolatticesAnd Mutations}

In this section we give explicit sequences of (numerical) mutations which map the exceptional bases obtained in \ref{sec:construct-vc} to those corresponding to the exceptional collections \cite[Equation 2.3]{AKO06}. This is the content of Theorem \ref{thm:MutationSequence}. We give an application to the representation theory of string junctions in Section \ref{sec:GHS}. 
We use the language of pseudolattices, which we briefly summarize below.

\subsection{Surface-like pseudolattices}
\label{sec:surfacelikePseudolatt-prelim}

For the main results of the theory of surface-like pseudolattices, we direct the reader to \cite{VdB_dTdV,Kuz17_pseudolattices,Vial17}.

\begin{definition}
    A pseudolattice is a finitely generated abelian group $\rG$ equipped with a non-degenerate bilinear form $\pairing_\rG \colon \rG \times \rG \to \Z$. We say that $(\rG,\pairing_\rG)$ is unimodular if $\pairing_\rG$ induces an isomorphism $\rG \to \Hom (\rG,\Z)$.
\end{definition}

\begin{definition}
    An automorphism $\rS_\rG\colon \rG \to \rG$ of a pseudolattice $(\rG,\pairing_\rG)$ is a Serre operator if it satisfies
    \[\pair{v_1,v_2}_\rG = \pair{v_2,\rS_\rG(v_1)}_\rG \qquad \mbox{ for all } v_1,v_2\in \rG\]
\end{definition}

If a Serre operator exists then it is unique by the non-degeneracy of $\pairing_\rG$. Moreover, it has a convenient matrix description: if $\chi$ is a Gram matrix of $\pairing_\rG$, then, provided that $\chi^{-1}$ is an integral matrix, $\chi^{-1} \chi^T$ is a Serre operator. In particular, if $G$ is unimodular then $\chi^{-1}$ is integral, hence all unimodular pseudolattices have a Serre operator. 

An important class of unimodular pseudolattices is given by those admitting  an exceptional basis:

\begin{definition}
    An element $e\in \rG$ is called exceptional if $\pair{e,e}_\rG=1$. A sequence of elements $e_\bullet=(e_1,\ldots,e_n)$ is exceptional if every $e_i$ is exceptional, and $\pair{e_i,e_j}_\rG=0$ for all $i>j$. An exceptional basis of $\rG$ is an exceptional sequence whose elements form a basis of $\rG$.
\end{definition}

If $\rG$ admits an exceptional basis, then $\rG$ is unimodular as the Gram matrix of the pairing written in the exceptional basis is triangular with $1$ on the diagonal.

Let $e\in \rG$ be an exceptional element. 

\begin{definition}
\label{def:num-mutation}
    The left and right mutation with respect to $e$ are endomorphisms of $\rG$ defined, respectively, by 
    \begin{equation}
        \sfL_e(v) = v - \pair{e,v}e, \qquad \sfR_e(v) = v - \pair{v,e}e
    \end{equation}
\end{definition}

Given an exceptional sequence $e_\bullet = (e_1,\ldots, e_n)$, the sequences
\begin{equation}
\begin{alignedat}{3}
        \sfL_{i}(e_\bullet) & \coloneqq (e_1,\ldots, e_{i-1}, \sfL_{e_{i}}(e_{i+1}), e_i,  e_{i+2},\ldots, e_n)\\
        \sfR_{i}(e_\bullet) & \coloneqq (e_1,\ldots, e_{i-1},  e_{i+1},  \sfR_{e_{i+1}}(e_i),  e_{i+2},\ldots, e_n)
\end{alignedat}
\end{equation}
(here $i=0,\ldots,n-1$) are both exceptional, and these two operations are mutual inverses.

\begin{definition}
A pseudolattice $\rG$ is \textit{surface-like} if there exists a primitive element $\bfp\in \rG$, called a \textit{point-like} object, satisfying:
\begin{itemize}
    \item $\pair{\bfp,\bfp}_\rG=0$;
    \item $\pair{\bfp,v}_\rG = \pair{v,\bfp}_\rG$ for all $v\in \rG$;
    \item The bilinear form $\pairing_\rG$ is symmetric on $\bfp^\perp = {}^\perp\!\bfp$. 
\end{itemize}
\end{definition}

One then defines the \textit{rank} of an element $v$ in a surface-like pseudolattice $\rG$ to be 
\begin{equation}
    \label{eq:defRank}
    \rk(v) \coloneqq \pair{\bfp,v}_\rG = \pair{v,\bfp}_\rG
\end{equation}

There is a filtration 
\begin{equation}
    \label{eq:filtrationPseudolattice}
  \Z\bfp\subseteq \bfp^\perp=\ker(\rk) \subseteq \rG. 
\end{equation}
The middle factor $\bfp^\perp/\bfp$ is called the N\'eron--Severi group of $\rG$ and is denoted by $\NS(\rG)$. The pairing $\chi$ restricts on $\NS(\rG)$ to a \textit{symmetric} bilinear form.
When it exists, the Serre operator acts as the identity on the factors of \eqref{eq:filtrationPseudolattice}. 

In \cite[Section 4]{Kuz17_pseudolattices}, the problem of classifying surface-like pseudolattices leads to considering the \textit{norm} of an exceptional basis $e_\bullet$, defined as the quantity 
\[ \left\lVert e_\bullet \right\rVert \coloneqq \sum_{i=1}^n \rk(e_i)^2. \]
We say that $e_\bullet$ is norm-minimal if the norm of any exceptional basis obtained from $e_\bullet$ by a sequence of mutations is greater than or equal to the norm of $e_\bullet$.

\begin{example}[Del Pezzo surfaces]\label{ex:DP}
Let $X_d$ be a del Pezzo surface of degree $d$ that is a blowup of $\pr 2$ in $\ell=9-d$ distinct points.
By Orlov's blowup formula, there is a semiorthogonal decomposition (SOD)
\begin{equation}
    D^b(\Coh(X_d))=\left\langle D^b(\pr 2), \sO_{E_1},\ldots,\sO_{E_{\ell}}  \right\rangle
\end{equation}
where the $E_i$ are the exceptional divisors of the blow-up. 

We abbreviate $K_0(D^b(\Coh(X_d)))$ by $K_0(X_d)$.  The SOD above implies that the Euler pairing $\chi$ on $K_0(X_d)$ is non-degenerate, hence $K_0(X_d)$ coincides with the numerical Grothendieck group. Then, equipped with the Euler pairing, $K_0(X_d)$ is a pseudolattice of rank $3+\ell$.  A Serre operator is defined by the action of the Serre functor $- \otimes \omega_{X_d}[2]$ of $D^b(\Coh(X_d))$ on $K_0(X_d)$.

By picking, for example, the exceptional collection $D^b(\Coh(\pr 2))=(\sO_{\pr 2}, \sT(-1), \sO_{\pr 2}(1))$ we obtain an exceptional basis for $K_0(X_d)$: 
\begin{equation}
    ([\sO_{X_d}], [\pi^*\sT(-1)], [\sO_{X_d}(1)], [\sO_{E_1}],\ldots,[\sO_{E_\ell}])
\label{eq:ExcBasisDP}
\end{equation}
Here, $[-]$ denotes the class of an object of $D^b(\Coh(X_d))$ in $K_0(X_d)$.
The Gram matrix of the Euler pairing in the basis \eqref{eq:ExcBasisDP} reads
\begin{equation}\label{eq:GramDP}
{%\scriptsize
M_\ell\coloneqq \left(\begin{array}{c|c}
    \begin{array}{ccc}
      1  & 3 & 3        \\
       0  & 1 & 3      \\
        0  & 0 & 1      
    \end{array} &
    \begin{array}{ccc}
         1 & \ldots &  1 \\
        2 & \ldots &  2 \\
          1 & \ldots &  1 
    \end{array} \\
    \hline
    &\\
    \text{\large $0_{\ell \times 3}$} & \text{\large $I_{\ell\times \ell}$}\\
    &\\
    \end{array}\right)
    }
\end{equation}

The pseudolattice $(K_0(X_d),\chi)$ is surface-like with point-like element $\bfp\coloneqq [\sO_p]$ the class of a skyscraper sheaf on $X_d$. With this choice, the N\'eron--Severi lattice of $K_0(X_d)$ is identified with $\NS(X_d)$. A basis for $\NS(X_d)$ is given by the cohomology classes of the strict transform of a line from $\pr 2$ and of each exceptional divisor. Then,  it is immediate to see that $\NS(X_d)$ is the rank $\ell+1$ lattice $\sfI^{1,\ell}$, defined as $\Z^{\ell+1}$ equipped with the bilinear form $\mathrm{diag}(1,-1,\ldots,-1)$ with respect to the standard basis. 
Under this identification, a canonical divisor on $X_d$ has class 
\[ \bfk_\ell = (-3,1,\ldots,1). \]
The vector $\bfk_\ell$ 
is a canonical class in the sense of \cite[Section 3.3]{Kuz17_pseudolattices}. 
\end{example}

\begin{example}[Genus-$1$ Lefschetz fibrations]
\label{exa:HT}
    In Section \ref{sec:construct-Lf} we construct genus-$1$ Lefschetz fibrations $\wt_d \colon \Yt_d \to \CC$ with $d=1,2,3$ and equip their relative homology groups $H_2(\Yt_d,\Sigma;\Z)$ with the Seifert pairing. The result is a surface-like pseudolattice
\begin{equation} 
\label{eq:defOfHl}
\left( H_2(\Yt_d,\Sigma; \Z), \pairing_{\mathrm{Sft}} \right)    
\end{equation}
More generally, pseudolattices associated with genus-$1$ Lefschetz fibrations are studied in \cite[Section 4.2]{HarderThompson20}, where they are also constructed abstractly by gluing smaller pseudolattices along a spherical homomorphism. % \cite[Section 2.4]{HarderThompson20}.
The homomorphism in question is called the \emph{asymptotic charge} 
(see Section \ref{sec:GHS}). 
The Serre operator is induced by the action of global monodromy on the fibration. % \cite[Proposition~4.4]{HarderThompson20}.
By \cite[Corollary 4.17]{HarderThompson20} and \cite[Theorem 5.1]{HarderThompson20}, $H_2(\Yt_d,\Sigma; \Z)$ is isometric to $K_0(X_d)$. 

See \cite[Section 4]{GiovenzanaThompson} for a study of these pseudolattices for (non-Lefschetz) elliptic fibrations.
\end{example}

\begin{remark}\label{rmk:AKO-collections}
Let $(M_\ell, W_\ell)$ be the LG models constructed by \cite{AKO06} (see Section \ref{sec:geometry-cats}).
Choosing $W_\ell$ as a small perturbation of $W_0$ (so that three critical values lie near those of $W_0$ and the reminaing $\ell$ lie near $\infty$), 
\cite{AKO06} produces a full exceptional collection for $D^b(\lagvc(W_\ell))$. 
The exceptional collection gives rise to an exceptional basis for the pseudolattice $K_0(W_\ell) \simeq H_2(M_\ell,\Sigma;\Z)$ (where $\Sigma$ is a regular fiber). The Gram matrix of the pairing written in this exceptional basis coincides precisely with \eqref{eq:GramDP} (for $d=9-\ell$),
without the need to apply mutations. In particular, the Grothendieck groups $K_0(X_d)$ and $K_0(W_\ell)$ are immediately seen to be isometric.
\end{remark}

\subsection{Sequences of mutations}
\label{sec:thm}
Let $(Y^\prime, w^\prime)$ be an LG model obtained by restricting  to $\CC$ 
a rational minimal elliptic surface $S$ with a fibre $D$ of type $I_{d}$, with $d=1,2,3$.
This includes the LG models $(Y_d,w_d)$ in Equation \eqref{eq:Yw},   $(\widetilde{Y}_d,\widetilde{w}_d)$ in Equation \eqref{eq:TildeYw}, and  $(M_\ell,W_\ell)$ in \cite{AKO06}. 
Its Fukaya-Seidel category $\mathrm{FS}(Y^\prime, w^\prime)$ is equivalent to $D^b\lagvc(\wt_d)$, arguing as in the proof of Proposition \ref{prop:EquivalenceYtildeAko}, or as in Remark \ref{rem:Y-mirror}.
In particular, we may identify the corresponding Grothendieck groups $K_0(w^\prime)$, which, by Example \ref{exa:HT}, or, alternatively, Remark \ref{rmk:AKO-collections},  are all isometric to $K_0(X_d)$. Moreover, all possible exceptional bases produced by such LG models are related to one-another by a sequence of mutations.

In this Section we exhibit explicit sequences of mutations  which send the bases \eqref{eq:vc-dP1},\eqref{eq:vc-dP2}, \eqref{eq:vc-dP3} of the pseudolattices $H_2(\Yt_d,\Sigma; \Z)=K_0(\wt_d)$ in \eqref{eq:defOfHl} to  bases 
with respect to which
the Gram matrix of the pairing 
coincides with the matrix $M_\ell$ in \eqref{eq:GramDP}. 

Thus, Theorem \ref{thm:MutationSequence} relates in a direct way the exceptional bases arising from our construction of Section \ref{sec:ConstructionLefschetzFibrations} to those appearing in \cite{AKO06}.

\begin{thm}\label{thm:MutationSequence}
The following sequences of mutations map the exceptional bases \eqref{eq:vc-dP3}, \eqref{eq:vc-dP2}, and \eqref{eq:vc-dP1} to the exceptional bases
\begin{equation}\label{eq:BasisAfterMutations}
a+b,2a-b,a-2b,\underbrace{-b,\ldots,-b}_{\ell \mbox{ copies}}   
\end{equation}
whose associated Gram matrices coincide with the matrices $M_\ell$ in \eqref{eq:GramDP}, $\ell=6,7,8$.

\begin{align}
    \label{eq:mutationsFord3}
    d=3 \colon \quad & \mathbf{\beta}_3 \coloneqq \sfL_1 \cdot (\sfL_2\sfL_3) \cdot \sfL_1 \cdot \sfL_3 \cdot \sfL_1 \cdot (\sfL_4 \sfL_5 \sfL_6  \sfL_7) 
 \cdot (\sfL_3 \sfL_4 \sfL_5) \cdot (\sfL_2  \sfL_3) \cdot \sfL_1\\   \label{eq:mutationsFord2}    d=2 \colon \quad & \beta_2\coloneqq  \beta_3  \cdot(\sfR_8 \cdot \ldots \cdot \sfR_1) \cdot (\sfR_8\cdot \sfR_7 \cdot \sfL_4)\\
 \label{eq:mutationsFord1} d=1 \colon \quad & \beta_1 \coloneqq \beta_2 \cdot (\sfR_9 \cdot \ldots \cdot \sfR_4) \cdot \sfL_6 
    \end{align}
\end{thm}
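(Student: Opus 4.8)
The plan is to reduce the whole statement to finite linear algebra in $H_1(\Sigma,\Z)=\Z a\oplus\Z b$. The key observation is that a numerical mutation in the sense of Definition \ref{def:num-mutation} acts on the ordered list of vanishing-cycle classes exactly by the Picard--Lefschetz formula \eqref{eq:PicardLefschetzGeneral}. Indeed, the boundary map $\partial\colon H_2(\Yt,\Sigma;\Z)\to H_1(\Sigma,\Z)$ is linear, and for an exceptional sequence \eqref{eq:SeifertPairing} gives $\pair{e_i,e_{i+1}}_{\mathrm{Sft}}=\pair{l_i,l_{i+1}}$; hence the class $\sfL_{e_i}(e_{i+1})=e_{i+1}-\pair{e_i,e_{i+1}}_{\mathrm{Sft}}e_i$ has boundary $l_{i+1}-\pair{l_i,l_{i+1}}l_i=\tau_{L_i}(l_{i+1})$, and symmetrically for right mutations. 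Since the Gram matrix \eqref{eq:SeifertPairing} of a list of thimbles is determined by the intersection numbers of their boundaries, it suffices to track the induced transformations on the list of vectors in $\Z^2$ and, at the end, to read off the pairing using \eqref{eq:intEllCurve}. Thus verifying that a word $\beta_d$ sends \eqref{eq:vc-dP3}, \eqref{eq:vc-dP2}, \eqref{eq:vc-dP1} to \eqref{eq:BasisAfterMutations} becomes a bookkeeping computation with integer $2$-vectors, in which the coefficient of each elementary step depends only on the current boundaries, so the recursion closes.

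For $d=3$ I would proceed directly: apply $\beta_3$ from \eqref{eq:mutationsFord3} one mutation at a time to \eqref{eq:vc-dP3}, updating each affected pair of vectors by the elementary rule above, and check that the output is $a+b,2a-b,a-2b,-b,\dots,-b$; a final use of \eqref{eq:SeifertPairing} and \eqref{eq:intEllCurve} confirms that the associated Gram matrix is $M_6$. The genuinely hard part is not this verification but the \emph{discovery} of the word $\beta_3$. Here I would lean on surface-like pseudolattice theory, using that $K_0(\wt_3)\simeq K_0(X_3)$ is surface-like with N\'eron--Severi lattice $\sfI^{1,6}$ and canonical class $\bfk_6$. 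The target \eqref{eq:BasisAfterMutations} is the distinguished exceptional basis whose Gram matrix equals \eqref{eq:GramDP}, and one navigates towards it by mutations guided by the norm $\lVert e_\bullet\rVert$ and by the requirement that the rank sequence $\rk(e_i)=\pair{\bfp,e_i}_{\mathrm{Sft}}$ take the shape $(1,2,1,0,\dots,0)$ dictated by the del Pezzo collection \eqref{eq:ExcBasisDP}.

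For $d=2$ and $d=1$ I would avoid repeating the pseudolattice analysis and instead use the interpolation (``radar screen'') strategy, viewing the Weierstrass forms \eqref{eq:Weierstrass-eqn} for consecutive degrees as algebraic deformations of one another, as made precise by the family of Example \ref{rem:def-Wf-es}. Passing from degree $d{+}1$ to degree $d$ displaces one node out of the fibre over $\infty$ (whose type drops from $I_{d+1}$ to $I_d$) into a new $I_1$ fibre over $\C$, creating one extra critical value and hence one extra vanishing cycle. I would identify the class of this new cycle by following the colliding roots of $\widetilde p$ along the deforming discriminant, and then check that the creation word $(\sfR_8\sfR_7\sfL_4)$, respectively $\sfL_6$, records the local reconfiguration inserting it, while the transport word $\sfR_8\cdots\sfR_1$, respectively $\sfR_9\cdots\sfR_4$, carries it into the last slot. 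The upshot is that these words turn \eqref{eq:vc-dP2} (respectively \eqref{eq:vc-dP1}) into the list \eqref{eq:vc-dP3} (respectively \eqref{eq:vc-dP2}) with one extra $-b$ appended; applying $\beta_3$ (respectively $\beta_2$), which acts on the entries inherited from the degree-$3$ (respectively degree-$2$) basis and fixes the appended one, then produces \eqref{eq:BasisAfterMutations} with $\ell=7$ (respectively $\ell=8$) copies of $-b$, whence the Gram matrices $M_7$ and $M_8$.

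The main obstacle is concentrated in the two non-mechanical steps. First, pinning down $\beta_3$: the space of mutation words is large, and without the norm-minimality and rank constraints supplied by surface-like pseudolattice theory the search is unguided. Second, making the interpolation rigorous: one must verify that the family of Example \ref{rem:def-Wf-es} genuinely realizes the extra critical value as a single transversally created node, compute the homology class of the associated vanishing cycle correctly (including its orientation), and confirm that the braiding of critical values along the deformation is exactly the one encoded by $(\sfR_8\sfR_7\sfL_4)$ and $\sfL_6$. Once these points are settled, everything else is the routine $\Z^2$ linear algebra of the first paragraph.
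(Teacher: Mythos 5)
Your proposal is correct and takes essentially the same route as the paper: a direct homology-class (Picard--Lefschetz) verification that $\beta_3$ sends \eqref{eq:vc-dP3} to \eqref{eq:BasisAfterMutations}, with the word $\beta_3$ discovered via norm-minimality and the point-like object in the surface-like pseudolattice $K_0(\wt_3)$, followed for $d=2,1$ by interpolating Weierstrass data between consecutive degrees, extending the collections over the $I_d$ fibre at infinity, identifying the displaced cycle, and transporting it to the last slot before composing with $\beta_3$ (resp.\ $\beta_2$). The only slips are cosmetic: the relevant interpolation is between the \emph{perturbed} polynomials $\widetilde{p}_{d+1}$ and $\widetilde{p}_d$ of \eqref{eq:dP-def} (namely \eqref{eq:us}), not the fixed-degree family of Example \ref{rem:def-Wf-es}, and the paper identifies the classes $c_d=\pm b$ of the cycles at infinity by a monodromy computation as in \cite[Lemma~3.1]{AKO06} rather than by tracking colliding roots.
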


\begin{remark}
\label{rem:change-basis}
 The change of basis $a \mapsto -a$, $b\mapsto -(a+b)$ turns \eqref{eq:BasisAfterMutations}  into the exceptional basis obtained in \cite[Section 3.3]{AKO06}.    
\end{remark}

Let $(Y^\prime, w^\prime)$ be an LG model as above.
If $(Y^\prime, w^\prime)$ defines a Lefschetz fibration, Theorem \ref{thm:MutationSequence} provides a tool to obtain an explicit sequence of mutations realizing an isometry between $K_0(X_d)$ and the pseudolattice  $H_2(Y^\prime,\Sigma;\Z)$ (with $\Sigma$ a general fiber), mapping the exceptional basis of vanishing thimbles of $(Y',w')$ to the basis \eqref{eq:ExcBasisDP}. 
Indeed, in this case $Y'$ admits a Weierstrass equation: interpolating it with the equation of $\Yt$, 
one can keep track of the trajectories of the critical values, and these in turn yield a sequence of mutations relating the corresponding pseudolattices. %Fukaya--Seidel Categories. 
For example, if two critical values $\lambda_1,\lambda_2$ with associated exceptional pair $(L_1,L_2)$ with respect to arcs $(\gamma_1, \gamma_2)$ move as in Figure \ref{fig:Mutations_VC}, then the exceptional pair $(L_1',L_2')$ associated to the arcs $(\gamma_1',\gamma_2')$ is equivalent to the right mutation $(L_1,L_2) \mapsto 
%(L_1',L_2')=
(L_2,\sfR_{L_2}(L_1))$.

\begin{figure}[ht!]
\centering
\begin{tikzpicture}[scale=0.8, font=\footnotesize]
\node (*) at (0,0) [dot, label=below:{$\lambda_*$}]{};
\node (A) at (-1,2) [dot, label=above:{$\lambda_1$}]{};
\node (B) at (1,2) [dot, label=above:{$\lambda_{2}$}]{};
\node (A') at (2,3) [dot]{}; 
\node (B') at (-2,1) [dot]{};

\draw (A) edge[dashed,->,out=80, in=130, looseness=1] (A');
\draw (B) edge[dashed,->,out=-40, in=-30, looseness=1] (B');

\draw (*) edge[swap,"{$\gamma_1$}"] (A);
\draw (*) edge[swap, "{$\gamma_{2}$}"] (B);

\node (*) at (7,0) [dot, label=below:{$\lambda_*$}]{};
\node (C') at (9,3) [dot]{}; 
\node (D') at (5,1) [dot]{};

\draw (*) edge[swap, "{$\gamma_{2}'$}"] (C');
\draw (*) edge["{$\gamma_{1}'\sim\gamma_{2}$}"] (D');
\end{tikzpicture}
\caption{On the left: the critical values $\lambda_1,\lambda_2$, the arcs $\gamma_1,\gamma_2$, and the trajectories of $\lambda_1,\lambda_2$ via the interpolation. On the right: the arcs $\gamma_1^\prime, \gamma_2^\prime$.}
\label{fig:Mutations_VC}
\end{figure}
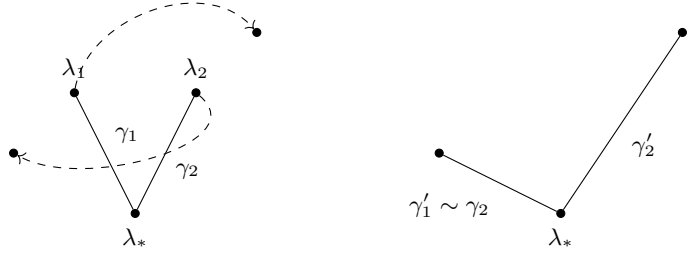

\subsection{Proof of Theorem \ref{thm:MutationSequence}}
\label{sec:thmproof}

We divide the proof in two steps. First, we provide a sequence of mutations for the degree $3$ case. Then, we deduce associated sequences of mutations for degrees $2$ and $1$ by deforming the Weierstrass fibrations of the elliptic surfaces $\widetilde{F}_d \colon \widetilde{S}_d \to \pr 1$ of Section \ref{sec:construct-Lf}, $d=3,2,1$, into one another.
We achieve this by
interpolating the polynomials $\widetilde{p}_d$  in Equation \eqref{eq:dP-def}.
The second step of the proof is an algebraic version of the construction of \cite{AKO06} outlined in Section \ref{sec:geometry-cats}.

Before the proof, we explain how we found the sequence of mutations for $d=3$.
The exceptional basis \eqref{eq:ExcBasisDP} is nearly norm minimal. We can obtain a norm minimal one by considering an exceptional collection of line bundles on $\pr 2$ instead of $(\sO_{\pr 2},\sT(-1),\sO_{\pr2}(1))$. 
Thus, one might expect that mutations of the basis \eqref{eq:vc-dP3} which reduce its norm will bring us closer to the basis \eqref{eq:ExcBasisDP}. This is indeed the case, and produces (among possibly other sequences of mutations) the sequence \eqref{eq:mutationsFord3}. 

To compute norms, we need a point-like object. Since the Serre operator $\rS \coloneqq \rS_{K_0(\wt_d)}$ act as the identity on the factors of \eqref{eq:filtrationPseudolattice}, we have $\im(\id - \rS)^2 = \Z\bfp$. Then, $\bfp$ may be chosen as a primitive vector in $\im(\id - \rS)^2$, which is unique up to a sign. This is completely explicit, since $\rS$ may be expressed in the basis \eqref{eq:vc-dP3} as the product $M^{-1}M^t$, where $M$ is the Gram matrix \eqref{eq:gram-3}.

\subsubsection{Proof for $d=3$.}

We begin by recalling the sequence \eqref{eq:vc-dP3} of vanishing cycles:
\[a+b,b,a,b,a,b,a,b,a.\]
To this we apply the sequence of mutations $\beta_3$:

\begin{equation}\label{eq:mutationSequence_dp3}
\begin{split}
 \sfL_1\colon \quad & a+b,{\color{BurntOrange}a-b},b,b,a,b,a,b,a  \\
 \sfL_2 \circ \sfL_3\colon \quad  & a+b,a-b,{\color{BurntOrange}a-2b},b,b,b,a,b,a \\
 \sfL_3 \circ \sfL_4 \circ \sfL_5\colon \quad   & a+b,a-b,a-2b,{\color{BurntOrange}a-3b},b,b,b,b,a \\
 \sfL_4 \circ \sfL_5 \circ \sfL_6 \circ \sfL_7\colon \quad  & a+b,a-b,a-2b,a-3b,{\color{BurntOrange}a-4b},b,b,b,b  \\
 \sfL_3 \circ \sfL_1\colon \quad & a+b,{\color{BurntOrange}-b},a-b,{\color{BurntOrange}-b},a-3b,b,b,b,b \\
 \sfL_1\colon \quad  & a+b,{\color{BurntOrange}a-2b},-b,-b,a-3b,b,b,b,b \\
 \sfL_2 \circ\sfL_3\colon \quad  & a+b,a-2b,{\color{BurntOrange}a-5b},-b,-b,b,b,b,b \\
 \sfL_1\colon \quad  & a+b,{\color{BurntOrange}2a-b},a-2b,-b,-b,-b,-b,-b,-b 
\end{split}
\end{equation}
In each row, the new classes resulting from the mutations at the beginning of the row are colored in orange. In the last row some orientations (at positions $1,5,6,7,8$) were switched: this is to match exactly the Gram matrix $M_\ell$ of \eqref{eq:GramDP}.

\subsubsection{Proof for $d=2$ and $d=1$.}

First, consider the elliptic surface $\widetilde{F}_3 \colon \widetilde{S}_3 \to \pr 1$.
To it we can associate the collection \eqref{eq:vc-dP3} 
extended by three identical cohomology classes,  which we indicate by $c_3$: 
 \begin{equation}
 \label{eq:index3-extended}
     a+b,b,a,b,a,b,a,b,a, c_3, c_3,c_3  
 \end{equation} 
 Indeed, $\widetilde{F}_3$ has a fiber of type $I_3$ over $\infty$, contributing three additional vanishing cycles with the same cohomology class.

By the previous step, the sequence of mutations $\beta_3$ takes \eqref{eq:index3-extended} to 
\begin{equation} 
\label{eq:final-12-els}
a+b, 2a-b, a-2b, -b, -b,-b,-b,-b,-b,  c_3,c_3,c_3 
\end{equation}

A simple monodromy computation as in \cite[Lemma~3.1]{AKO06} (applied to either \eqref{eq:index3-extended} or \eqref{eq:final-12-els}) shows that $c_3=\pm b$
\footnote{This is consistent with the fact that the collection \eqref{eq:final-12-els} has to match the cycles $L_i$ in  \cite[Lemma 3.1]{AKO06} after the change of basis in Remark \ref{rem:change-basis}.}. 
Choosing $c_3=-b$, for each $\ell$ the the collection \eqref{eq:BasisAfterMutations} is the subcollection of \eqref{eq:final-12-els} given by the first $3+\ell$ elements.

We now consider the collections \eqref{eq:vc-dP2} and  \eqref{eq:vc-dP1}.

\subsubsection*{Interpolation between $d=3$ to $d=2$.}
\label{ssec:interpol32}
With the same procedure as above, to the elliptic surface $\widetilde{F}_2 \colon \widetilde{S}_2 \to \pr 1$
we can associate the collection \eqref{eq:vc-dP2} 
extended by two identical cohomology classes, which we denote by $c_2$:
\begin{equation}
 \label{eq:index2-extended}
     a+b,{\boldsymbol{a}},b,a,a,a-b,b,b,a,b,  c_2,c_2  
 \end{equation} 
 (The class shown in bold will be relevant below). As above, a simple monodromy computation applied to \eqref{eq:index2-extended} shows that $c_2=\pm b$.

We define a family of Weierstrass fibrations
by interpolating the polynomials $\widetilde{p}_3$ and $\widetilde{p}_2$ as follows
\begin{equation}
\label{eq:us}
u_s\coloneqq e^{i \pi s}\widetilde{p}_3 + s(\widetilde{p}_3 + \widetilde{p}_2), \qquad s \in [0,1]  \end{equation}
Observe that $u_0=\widetilde{p}_3$ and $u_1=\widetilde{p}_2$, that is, the Weierstrass fibrations at $s=0$ and $s=1$ are those corresponding to  $\widetilde{F}_3 \colon \widetilde{S}_3 \to \pr 1$ and $\widetilde{F}_2 \colon \widetilde{S}_2 \to \pr 1$ respectively. This specific choice is more convenient than, for example, the interpolation $s\widetilde{p}_2 + (1-s)\widetilde{p}_3$ when we plot the trajectories of critical values.

Formula \eqref{eq:us} and Figure \ref{fig:Interpol_dp2_dp3},  which 
tracks the trajectories of critical values over $\CC$ as $s$ varies, 
suggest a sequence of mutations relating the two collections \eqref{eq:index2-extended} and \eqref{eq:index3-extended}. 

First, observe that as  $s$ increases the finite critical values of $\widetilde{F}_3$ 
are perturbed within $\CC$. 
Among the three critical points over $\infty$, two remain at $\infty$, which indicates that $c_2=c_3$.
The remaining one is displaced to a finite value of the potential (for $s=1$, this value is in position $1$ and the corresponding class is marked in bold in \eqref{eq:index3-extended}).  
Altogether, this indicates that a mutation sequence
will involve only the first $10$ classes of \eqref{eq:index2-extended}.

The trajectories in Figure \ref{fig:Interpol_dp2_dp3} 
point to one of the mutation sequences:
\begin{equation}
        \label{eq:finite-mutations-d=2}
(\sfR_8 \cdot \ldots \cdot \sfR_1) \cdot (\sfR_8\cdot \sfR_7 \cdot \sfL_4)= (\sfR_7\cdot \sfR_6 \cdot \sfL_3) \cdot (\sfR_8 \cdot \ldots \cdot \sfR_1)
\end{equation}

Applying $(\sfR_8\cdot \sfR_7 \cdot \sfL_4)$ to \eqref{eq:index2-extended}, we obtain (up to orientations)
the collection
\[a+b,{\boldsymbol{a}},b,a,{\color{BurntOrange}b},a,b,a,b,{\color{BurntOrange}a}  ,  b,b\]
Then $(\sfR_8 \cdot \ldots \cdot \sfR_1)$ moves the class in bold to the desired position 9. The resulting collection is
\[  a+b,b,a,b,a,b,a,b,a,{\boldsymbol{b}},  b,b \]
i.e. (up to orientation of the last three classes) the collection \eqref{eq:index3-extended}. 
 
It follows that the sequence $\beta_2$ in \eqref{eq:mutationsFord2}, obtained by composing $\beta_3$ and \eqref{eq:finite-mutations-d=2},  sends \eqref{eq:vc-dP2} to \eqref{eq:BasisAfterMutations} with $\ell=7$.

\subsubsection*{Interpolation between $d=2$ and $d=1$}\label{ssec:interpol21} 

As above, we start from \eqref{eq:vc-dP1} ``extended over infinity'' by a class $c_1$
(and as above, $c_1=\pm b$)
\begin{equation}
 \label{eq:index1-extended}  
 a+b,a,b,a,b,a,b,a,b,a,b, c_1
\end{equation}
and define a family of Weierstrass fibrations by the interpolation:
\[ v_s\coloneqq e^{i \pi s}\widetilde{p}_2 + s(\widetilde{p}_2 + \widetilde{p}_1), \qquad s \in [0,1],
\]
Now Figure \ref{fig:Interpol_dp1_dp2} shows that a cycle coming from infinity in \eqref{eq:index2-extended} is inserted in position $4$, and suggests the sequence of mutations:
\[(\sfR_9 \cdot \ldots \cdot \sfR_4) \cdot \sfL_6 = \sfL_5 \cdot (\sfR_9 \cdot \ldots \cdot \sfR_4)
\]
which turns \eqref{eq:index1-extended} into \eqref{eq:index2-extended}. 
One concludes composing this sequence with $\beta_2$. \qed

\tikzset{every picture/.style={line width=0.75pt}} %set default line width to 0.75pt        

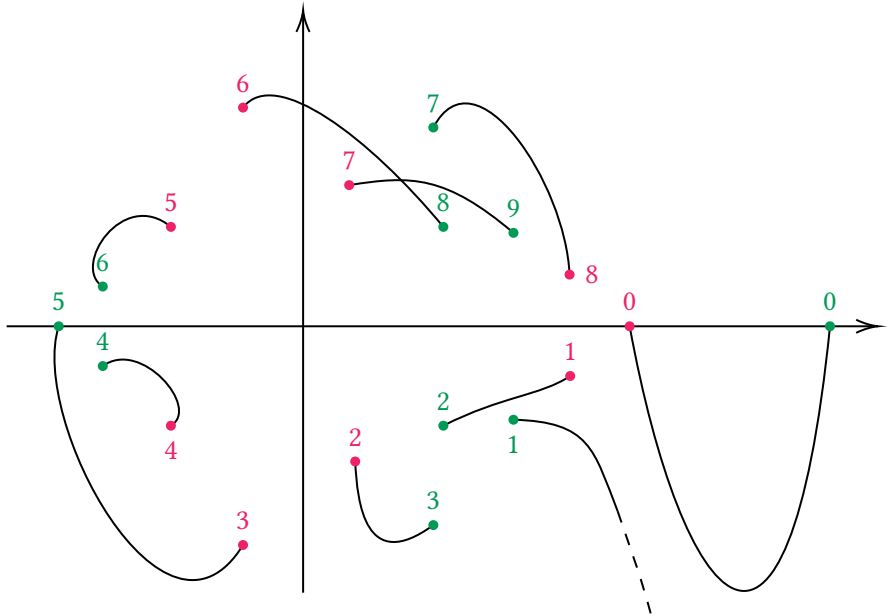
\begin{figure}[ht!]
\begin{tikzpicture}[x=0.75pt,y=0.75pt,yscale=-1,xscale=1]
%uncomment if require: \path (0,413); %set diagram left start at 0, and has height of 413

%Straight Lines [id:da5307764970982063] 
\draw [line width=0.75]    (152,170) -- (585,170) ;
\draw [shift={(587,170)}, rotate = 180] [color={rgb, 255:red, 0; green, 0; blue, 0 }  ][line width=0.75]    (10.93,-3.29) .. controls (6.95,-1.4) and (3.31,-0.3) .. (0,0) .. controls (3.31,0.3) and (6.95,1.4) .. (10.93,3.29)   ;
%Straight Lines [id:da9271855135295264] 
\draw [line width=0.75]    (300,304) -- (300,13) ;
\draw [shift={(300,11)}, rotate = 90.39] [color={rgb, 255:red, 0; green, 0; blue, 0 }  ][line width=0.75]    (10.93,-3.29) .. controls (6.95,-1.4) and (3.31,-0.3) .. (0,0) .. controls (3.31,0.3) and (6.95,1.4) .. (10.93,3.29)   ;
%Curve Lines [id:da16602139372123847] 
%OLD COLOR
%\draw [color={rgb, 255:red, 245; green, 166; blue, 35 }  ,draw opacity=1 ]   (463,170) .. controls (489,313) and (542,379) .. (563,170) ;
%\draw (463,170)  node [wdot]{};
%\draw (563,170) node [dot]{};
\draw (463,170) .. controls (489,313) and (542,379) .. (563,170) ;
\draw (463,170)  node [pdot, label={\textcolor{WildStrawberry}{0}}]{};
\draw (563,170) node [gdot, label={\textcolor{ForestGreen}{0}}]{};
%Curve Lines [id:da16403300850656177] 
\draw (370,220) .. controls (399.33,205) and (418.33,205) .. (433.33,195) ;
\draw (370,220)  node [gdot, label={\textcolor{ForestGreen}{2}}]{};
\draw (433.33,195) node [pdot, label={\textcolor{WildStrawberry}{1}}]{};
%Curve Lines [id:da9570202124877363] 
\draw (326,238) .. controls (327,259) and (331,295) .. (365,270) ;
\draw (326,238)  node [pdot, label={\textcolor{WildStrawberry}{2}}]{};
\draw (365,270) node [gdot, label={\textcolor{ForestGreen}{3}}]{};
%Curve Lines [id:da26880192951874426] 
\draw (178,170) .. controls (163,217) and (231,344) .. (270,280) ;
\draw (178,170)  node [gdot, label={\textcolor{ForestGreen}{5}}]{};
\draw (270,280) node [pdot, label={\textcolor{WildStrawberry}{3}}]{};
%Curve Lines [id:da8380442143629385] 
\draw (200,190) .. controls (219,176) and (249,211) .. (234,220) ;
\draw (200,190)  node [gdot, label={\textcolor{ForestGreen}{4}}]{};
\draw (234,220) node [pdot, label=below:{\textcolor{WildStrawberry}{4}}]{};
%Curve Lines [id:da20148272896439745] 
\draw (200,150) .. controls (184,140) and (210,100) .. (234,120) ;
\draw (200,150)  node [gdot, label={\textcolor{ForestGreen}{6}}]{};
\draw (234,120) node [pdot, label={\textcolor{WildStrawberry}{5}}]{};
%Curve Lines [id:da4601286132033249] 
\draw (270,60) .. controls (286,39) and (333,75) .. (370,120) ;
\draw (270,60)  node [pdot, label={\textcolor{WildStrawberry}{6}}]{};
\draw (370,120) node [gdot, label={\textcolor{ForestGreen}{8}}]{};
%Curve Lines [id:da39812804931211376] 
\draw (323,99) .. controls (354,94) and (371,94) .. (405,123) ;
\draw (323,99)  node [pdot, label={\textcolor{WildStrawberry}{7}}]{};
\draw (405,123) node [gdot, label={\textcolor{ForestGreen}{9}}]{};
%Curve Lines [id:da36660150344111697] 
\draw (365,70) .. controls (386,30) and (431,98) .. (433,144) ;
\draw (365,70)  node [gdot, label={\textcolor{ForestGreen}{7}}]{};
\draw (433,144) node [pdot, label=right:{\textcolor{WildStrawberry}{8}}]{};
%Curve Lines [id:da21126299291520745] 
\draw (405,217) .. controls (443.33,218) and (445,232) .. (457,263) ;
\draw (405,217)  node [gdot, label=below:{\textcolor{ForestGreen}{1}}]{};
%Curve Lines [id:da1556523013908062] 
\draw [dash pattern={on 4.5pt off 4.5pt}]  (457,263) .. controls (464.84,286.52) and (466.92,289.87) .. (474.53,318.23) ;

\end{tikzpicture}
\vspace{-0.3 cm}

\caption{Critical values of $u_s$. Bullets \textcolor{ForestGreen}{$\bullet$} with green labels
mark critical values of $u_1=\widetilde{p}_2$ and pink ones \textcolor{WildStrawberry}{$\bullet$}
mark critical values of $u_0=\widetilde{p}_3$.
}
\label{fig:Interpol_dp2_dp3}
\end{figure}

\begin{figure}[ht!]
    \centering
\tikzset{every picture/.style={line width=0.75pt}} %set default line width to 0.75pt        

\begin{tikzpicture}[x=0.75pt,y=0.75pt,yscale=-1,xscale=1]
%uncomment if require: \path (0,413); %set diagram left start at 0, and has height of 413

%Straight Lines [id:da5307764970982063] 
\draw [line width=0.75]    (152,170) -- (585,170) ;
\draw [shift={(587,170)}, rotate = 180] [color={rgb, 255:red, 0; green, 0; blue, 0 }  ][line width=0.75]    (10.93,-3.29) .. controls (6.95,-1.4) and (3.31,-0.3) .. (0,0) .. controls (3.31,0.3) and (6.95,1.4) .. (10.93,3.29)   ;
%Straight Lines [id:da9271855135295264] 
\draw [line width=0.75]    (300,304) -- (300,13) ;
\draw [shift={(300,11)}, rotate = 90.39] [color={rgb, 255:red, 0; green, 0; blue, 0 }  ][line width=0.75]    (10.93,-3.29) .. controls (6.95,-1.4) and (3.31,-0.3) .. (0,0) .. controls (3.31,0.3) and (6.95,1.4) .. (10.93,3.29)   ;
%Curve Lines [id:da16602139372123847] %arrov 0 0
\draw (563,170) .. controls (589,312) and (482,380) .. (503,170) ;
\draw (563,170) node [bdot, label={\textcolor{Plum}{0}}]{};
\draw (503,170) node [gdot, label={\textcolor{ForestGreen}{0}}]{};
%Curve Lines [id:da16403300850656177] % arrow 2 2
\draw (365,240) .. controls (375,231) and (359,221) .. (350,230) ;
\draw (365,240) node [gdot, label=below:{\textcolor{ForestGreen}{2}}]{};
\draw (350,230) node [bdot, label={\textcolor{Plum}{2}}]{};
%Curve Lines [id:da9570202124877363] % arrow3 3
\draw (280,240) .. controls (333,244) and (311,265) .. (340,280) ;
\draw (280,240) node [bdot, label={\textcolor{Plum}{3}}]{};
\draw (340,280) node [gdot, label={\textcolor{ForestGreen}{3}}]{};
%Curve Lines [id:da26880192951874426] % arrow 5 7
\draw (180,170) .. controls (164,131) and (185,84) .. (250,80) ;
\draw (180,170) node [gdot, label={\textcolor{ForestGreen}{5}}]{};
\draw (250,80) node [bdot, label={\textcolor{Plum}{7}}]{};
%Curve Lines [id:da8380442143629385] %arrow 4 5
\draw (200,190) .. controls (186,207) and (208,234) .. (230,210) ;
\draw (200,190) node [gdot, label={\textcolor{ForestGreen}{4}}]{};
\draw (230,210) node [bdot, label={\textcolor{Plum}{5}}]{};
%Curve Lines [id:da20148272896439745] %arrow 6 6
\draw (200,150) .. controls (238,148) and (233,120) .. (230,130) ;
\draw (200,150) node [gdot, label={\textcolor{ForestGreen}{6}}]{};
\draw (230,130) node [bdot, label={\textcolor{Plum}{6}}]{};
%Curve Lines [id:da4601286132033249] %arrow 9 8
\draw (350,120) .. controls (369,110) and (372,104) .. (365,100) ;
\draw (350,120) node [bdot, label={\textcolor{Plum}{9}}]{};
\draw (365,100) node [gdot, label={\textcolor{ForestGreen}{8}}]{};
%Curve Lines [id:da39812804931211376] %arrow 9 10
\draw (450,130) .. controls (460,106) and (413,77) .. (400,110) ;
\draw (450,130) node [bdot, label=below:{\textcolor{Plum}{10}}]{};
\draw (400,110) node [gdot, label={\textcolor{ForestGreen}{9}}]{};
%Curve Lines [id:da36660150344111697] %arrow 8 7
\draw (340,60) .. controls (319,29) and (267,29) .. (280,70) ;
\draw (340,60) node [gdot, label={\textcolor{ForestGreen}{7}}]{};
\draw (280,70) node [bdot, label=below:{\textcolor{Plum}{8}}]{};
%Curve Lines [id:da21126299291520745] %arrow 1 1
\draw (400,230) .. controls (442,244) and (495,201) .. (450,210) ;
\draw (400,230) node [gdot, label={\textcolor{ForestGreen}{1}}]{};
\draw (450,210) node [bdot, label={\textcolor{Plum}{1}}]{};
%Curve Lines [id:da3449022842847702] 
\draw (250,260) .. controls (271,319) and (303,278) .. (321,344) ;
\draw (250,260) node [bdot, label={\textcolor{Plum}{4}}]{};
%Curve Lines [id:da8891183466394776] 
\draw [dash pattern={on 4.5pt off 4.5pt}]  (321,344) .. controls (332.88,373.7) and (317.32,341.65) .. (344.17,384.67) ;

\end{tikzpicture}
\caption{Critical values of $v_s$. Green bullets \textcolor{ForestGreen}{$\bullet$}
mark critical values of $v_0=\widetilde{p}_2$, 
violet \textcolor{Plum}{$\bullet$} and labels mark critical values of $v_1=\widetilde{p}_1$.}
\label{fig:Interpol_dp1_dp2}
\end{figure}
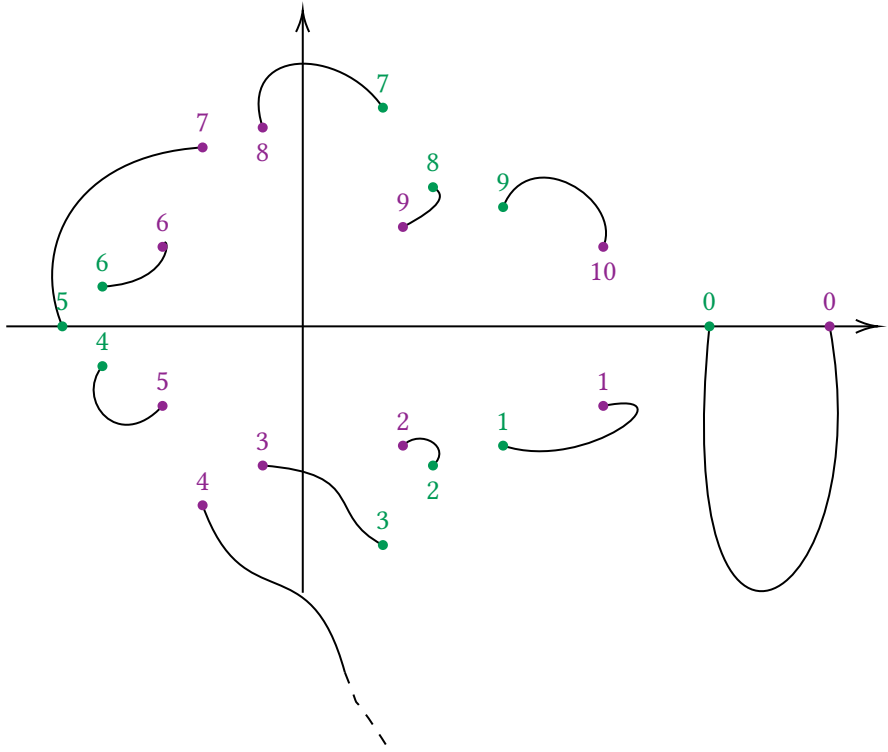

\subsection{Comparison with string junction results}
\label{sec:GHS}
In \cite{GHS13:matter}, the authors view a local equation of an ADE surface singularity as a  Weierstrass form
\begin{equation}
\label{eq:GHSSing}
y^2=x^3+a(z)x+b(z) \end{equation}
defining an affine singular surface $Z$
with a genus-$1$ fibration $Z \to D$ over a disc. Here, $z$ is a coordinate on $D$. Up to shrinking $D$, the discriminant is $\Delta=z^N$, where $N=\ell+1$ in the $A_\ell$ case, and $N=\ell+2$ in the $D_\ell$ and $E_\ell$ cases. 
Then, they deform \eqref{eq:GHSSing} to a Lesfchetz genus-$1$ fibration $\widetilde{q} \colon \widetilde{Z} \to D$ with $N$ distinct critical values.

We focus on the $E_\ell$ case, and denote the corresponding Lefschetz fibration by $\widetilde{q}_\ell$. Our fibration  $\widetilde{w}_d \colon \widetilde{Y}_d \to \CC$, where $d=9-\ell$, is a global version of the fibration $\widetilde{q}_\ell$.

\subsubsection{Collections of vanishing cycles}\label{ssec:GHSCycles}
The authors of \cite{GHS13:matter} associate to the fibrations $\widetilde{q}$  ordered collections of vanishing cycles.
In what follows, 
we show how to obtain the collections of \cite{GHS13:matter}
for the $E_\ell$ cases 
from our collections \eqref{eq:vc-dP1}, \eqref{eq:vc-dP2}, and \eqref{eq:vc-dP3}. 

From \eqref{eq:vc-dP1} we remove the cycle in position $0$, since the  critical value $\lambda_0$ is far from the origin. 
Then, we change basis to $A,B$ such that $A=a, A+B=b$. This produces (up to re-orienting $b$) 
the sequence of vanishing cycles $Z_{E_8}$ of \cite[Section 3.4]{GHS13:matter}:
\begin{equation}
Z_{E_8}\coloneqq \left( A, -(A+B),  A, -(A+B),  A, -(A+B),  A, -(A+B),  A, -(A+B) \right)
\end{equation}

From \eqref{eq:vc-dP2}, we again ignore the  vanishing cycle in position $0$ and apply the change of basis above. The result is the sequence 
\[ \left( A,A+B, A, A, -B, A+B, A+B, A, A+B \right) \]
Apply a left mutation $\sfL_2$ of $A$ across $A+B$ and a right mutation $\sfR_7$ of $A+B$ across $A$ (where the classes are indexed from $1$ to $9$). The result matches the sequence $Z_{E_7}$ of \cite[Section 3.4]{GHS13:matter} up to orientations: 
\begin{equation}
%Z_{E_7}\coloneqq \left( A, -B, A+B, A, -B, A+B, A, B, A+B \right)
Z_{E_7}\coloneqq \left( A, B, -(A+B), A, B, -(A+B), A, B, -(A+B) \right)
\end{equation}

Finally, we obtain the sequence $Z_{E_6}$ of \cite[Section 3.4]{GHS13:matter} by dropping the cycle of \eqref{eq:vc-dP3} in position $0$ and changing to a basis $b=A$, $a=-(A+B)$: 
\begin{equation}
Z_{E_6}\coloneqq \left( A, -(A+B),  A, -(A+B),  A, -(A+B),  A, -(A+B)   \right)
\end{equation}

\subsubsection{Bilinear pairings and matrix factorizations}

The paper \cite{GHS13:matter} associates to the fibration $\widetilde{q}$ a lattice, called the \textit{string
junction} lattice. As an abelian group, this lattice is the free abelian group $\bfJ$ generated by the Lefschetz thimbles of $\widetilde{q}$, it has rank $N$. It is isomorphic to $H_2(\widetilde{Z}, \Sigma;  \Z)$, where $\Sigma$ is a regular fiber of $\widetilde{q}$ \cite[Theorem 2.5]{GHS16_junctions}. The group $\bfJ$ carries a \textit{junction pairing} $\pairing_\rJ$, which is the symmetrization of the Seifert pairing defined by \eqref{eq:SeifertPairing} (see \cite[Remark 4.3]{HarderThompson20}). 
Replacing $\pairing_\rJ$ with $\pairing_{\mathrm{Sft}}$, we obtain an isometry of pseudolattices
\[ (K_0(\widetilde{q}), \chi) \simeq (\bfJ, \pairing_{\mathrm{Sft}})  \]

We provide a B-side interpretation of this pseudolattice in the $E_\ell$ case. Let $X_d$ be a del Pezzo surface 
as in Example \ref{ex:DP} with $d=9-l$.
Consider the full triangulated subcategory 
\[\!^\perp\sO_{X_d} \coloneqq  \{E\in D^b(\Coh(X_d)) \,\mid\, \Hom(E,\sO_{X_d}[i])=0 \mbox{ for all }i\} \subset D^b(\Coh(X_d))\] 
\begin{proposition}
\label{pro:orthogonal}
There is an isometry of pseudolattices:
\[ K_0(\!^\perp\sO_{X_d}) \simeq K_0(\widetilde{q}_\ell)
\]
\end{proposition}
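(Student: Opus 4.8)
The plan is to realise both $K_0(\!^\perp\sO_{X_d})$ and $K_0(\widetilde q_\ell)$ as sublattices of pseudolattices already studied in the paper, and then to check that the mutation sequence $\beta_d$ of Theorem~\ref{thm:MutationSequence} restricts to the desired isometry.

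\emph{Setting up the two sides.} First I would identify $K_0(\!^\perp\sO_{X_d})$ inside $K_0(X_d)$. Since $\sO_{X_d}$ is exceptional and is the first member of the exceptional collection \eqref{eq:ExcBasisDP}, the vanishing $\Hom(E,\sO_{X_d}[\bullet])=0$ holds for the remaining members $[\pi^*\sT(-1)], [\sO_{X_d}(1)], [\sO_{E_1}], \ldots, [\sO_{E_\ell}]$; these therefore lie in, and generate, $\!^\perp\sO_{X_d}$, so that $K_0(\!^\perp\sO_{X_d})$ is exactly the rank-$(\ell+2)$ sublattice of $K_0(X_d)$ spanned by the last $\ell+2$ vectors of \eqref{eq:ExcBasisDP}, with the restricted Euler pairing. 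On the other side, Section~\ref{sec:GHS} already identifies $K_0(\widetilde q_\ell) \simeq (\bfJ,\pairing_{\mathrm{Sft}})$ with the sublattice of $K_0(\widetilde w_d)=H_2(\widetilde Y_d,\Sigma;\Z)$ spanned by the thimbles at positions $1,\ldots,\ell+2$ of the vanishing-cycle basis \eqref{eq:vc-dP1}, \eqref{eq:vc-dP2}, \eqref{eq:vc-dP3}, namely those left after discarding the position-$0$ cycle attached to the critical value $\lambda_0$ far from the origin. In particular both lattices have rank $\ell+2$, and each is non-degenerate since it admits an exceptional basis.

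\emph{Key observation.} The crucial point is that every mutation occurring in the sequences $\beta_d$ of \eqref{eq:mutationsFord3}, \eqref{eq:mutationsFord2}, \eqref{eq:mutationsFord1} carries index $\geq 1$. By Definition~\ref{def:num-mutation} a mutation $\sfL_i$ or $\sfR_i$ alters only the vectors at positions $i$ and $i+1$, so no mutation in $\beta_d$ ever touches the position-$0$ vector, which remains equal to $a+b$ throughout (as one sees explicitly in the degree-$3$ run \eqref{eq:mutationSequence_dp3}), and the $\Z$-span of the vectors at positions $1,\ldots,\ell+2$ is invariant. As each numerical mutation is an isometry for $\pairing_{\mathrm{Sft}}$, the composite $\beta_d$ restricts to an isometry from $K_0(\widetilde q_\ell)$ onto the span of the last $\ell+2$ vectors of \eqref{eq:BasisAfterMutations}.

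\emph{Conclusion.} By Theorem~\ref{thm:MutationSequence} the basis \eqref{eq:BasisAfterMutations} has Gram matrix $M_\ell$, equal to the Gram matrix \eqref{eq:GramDP} of the basis \eqref{eq:ExcBasisDP} in $K_0(X_d)$; hence the entrywise correspondence between these two exceptional bases is an isometry $\Phi\colon K_0(\widetilde w_d)\xrightarrow{\sim}K_0(X_d)$ sending the position-$0$ vector $a+b$ to $[\sO_{X_d}]$ and the last $\ell+2$ vectors to the generators of $K_0(\!^\perp\sO_{X_d})$ found above. Composing $\Phi$ with the restriction of the previous paragraph produces the asserted isometry $K_0(\!^\perp\sO_{X_d})\simeq K_0(\widetilde q_\ell)$. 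The only genuine verifications are the finite index bookkeeping for the three sequences $\beta_d$ and the matching, under $\Phi$, of the discarded position-$0$ cycle with the exceptional object $\sO_{X_d}$; I expect these to be the main (though light) obstacle, everything else being formal once Theorem~\ref{thm:MutationSequence} is granted.
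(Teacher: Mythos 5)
Your proof follows the same route as the paper's own: both sides are realised as the sublattices spanned by the vectors in positions $1,\ldots,\ell+2$ of the bases \eqref{eq:ExcBasisDP} and \eqref{eq:vc-dP1}--\eqref{eq:vc-dP3}, and the decisive observation --- that every mutation in $\beta_d$ has index $\geq 1$, hence never touches position $0$ and preserves the span of positions $1,\ldots,\ell+2$ --- is exactly the one the paper uses. Your identifications of $K_0(\!^\perp\sO_{X_d})$ and $K_0(\widetilde{q}_\ell)$, and the construction of the global isometry $\Phi$ from the matching of Gram matrices, are correct.

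However, the claim ``each numerical mutation is an isometry for $\pairing_{\mathrm{Sft}}$'' is false, and the map you finally write down inherits the error. A mutation does not preserve the pairing: for an exceptional pair with $\pair{e_i,e_{i+1}}=\chi$, the mutated pair $(\sfL_{e_i}(e_{i+1}),e_i)=(e_{i+1}-\chi e_i,\,e_i)$ satisfies $\pair{e_{i+1}-\chi e_i,\,e_i}=-\chi$, so the basis-to-basis map induced by a single mutation preserves the form only when $\chi=0$. Indeed, if the composite $\beta_d$ were an isometry in this sense, the Gram matrix of \eqref{eq:vc-dP3} would already equal $M_\ell$ and Theorem \ref{thm:MutationSequence} would have no content: changing the Gram matrix from \eqref{eq:gram-3} to \eqref{eq:GramDP} is the entire purpose of $\beta_d$. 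Consequently your final map $\Phi\circ R$, where $R$ sends the $i$-th thimble to the $i$-th vector of \eqref{eq:BasisAfterMutations}, sends the $i$-th thimble to the $i$-th vector of \eqref{eq:ExcBasisDP}; this is an isomorphism of abelian groups but not an isometry, because the Gram matrices \eqref{eq:gram-3} and \eqref{eq:GramDP} remain different after erasing the $0$-th row and column. The repair is immediate and is what the paper does: do not compose with $R$ at all. By your own span-invariance observation, the last $\ell+2$ vectors of \eqref{eq:BasisAfterMutations} already form a basis of the sublattice $K_0(\widetilde{q}_\ell)$, with Gram matrix equal to $M_\ell$ with top row and left-most column erased; matching this basis entrywise with the last $\ell+2$ vectors of \eqref{eq:ExcBasisDP} --- equivalently, restricting $\Phi$ to $K_0(\widetilde{q}_\ell)$ --- is the desired isometry onto $K_0(\!^\perp\sO_{X_d})$.
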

\begin{proof}
Removing the element in position $0$ from the basis \eqref{eq:ExcBasisDP}, we obtain an exceptional basis for $K_0(\!^\perp\sO_{X_d})$. The corresponding Gram matrix is obtained from \eqref{eq:GramDP} by erasing the top row and left-most column. 
It is shown in Section \ref{ssec:GHSCycles} that removing the element in position $0$ from the bases \eqref{eq:vc-dP3},\eqref{eq:vc-dP2}, and \eqref{eq:vc-dP1} we obtain exceptional bases for $K_0(\widetilde{q}_\ell)$.

Observe, finally, that the mutations in Theorem \ref{thm:MutationSequence} never involve the $0$-th position. Thus, the same mutation sequences send the bases for $K_0(\widetilde{q}_\ell)$ to \eqref{eq:BasisAfterMutations} with the $0$-th element removed, which produces the isometry.
\end{proof}
%\cite[Theorem 3.11]{Orlov09_DerSing}
Recall that $X_d$ is a weighted projective hypersurface (Section \ref{sec:Xd-HV}), let $h^{(d)}$ %$h_d$ 
be its defining polynomial.
%generic weighted homogeneous polynomial of degree $d$ in the corresponding weighted projective space. 
Then by \cite[Remark 3.12]{Orlov09_DerSing}, there is an equivalence of categories between $\!^\perp\sO_{X_d}$ and the category of graded matrix factorizations $\mathrm{DGr}(h^{(d)})$. Then, directly from Proposition \ref{pro:orthogonal} we obtain the following corollary.
\begin{corollary}\label{cor:MF}
$K_0(\widetilde{q}_\ell)\simeq K_0(\mathrm{DGr}(h^{(d)}))$, and the isomorphism respects the bilinear pairings.  
\end{corollary}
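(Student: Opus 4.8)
The plan is to combine the lattice isometry of Proposition \ref{pro:orthogonal} with Orlov's categorical equivalence recalled immediately above, exploiting the elementary principle that an exact equivalence of triangulated categories induces an isometry of Grothendieck groups with respect to the Euler pairing.

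First I would invoke \cite[Remark 3.12]{Orlov09_DerSing} to fix an exact equivalence $\Phi \colon \mathrm{DGr}(h^{(d)}) \xrightarrow{\sim} \!^\perp\sO_{X_d}$. Since $\Phi$ is an exact equivalence of triangulated categories, it induces an isomorphism of Grothendieck groups $K_0(\mathrm{DGr}(h^{(d)})) \xrightarrow{\sim} K_0(\!^\perp\sO_{X_d})$. To see that this isomorphism respects the pairings, I would note that for any objects $E,F$ the equivalence yields natural isomorphisms $\Ext^i(E,F) \cong \Ext^i(\Phi E,\Phi F)$ for all $i$, whence $\chi(E,F)=\chi(\Phi E,\Phi F)$; since the pseudolattice structure on each side is precisely the Euler pairing, $\Phi$ descends to an isometry.

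I would then compose this isometry with the one furnished by Proposition \ref{pro:orthogonal}, namely $K_0(\!^\perp\sO_{X_d}) \simeq K_0(\widetilde{q}_\ell)$, to obtain $K_0(\widetilde{q}_\ell) \simeq K_0(\mathrm{DGr}(h^{(d)}))$ respecting the bilinear pairings, as required.

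I do not anticipate a genuine obstacle here: both ingredients are already in hand, the categorical equivalence from Orlov and the lattice-level isometry from Proposition \ref{pro:orthogonal}. The only point demanding (routine) care is verifying that Orlov's equivalence descends to an isometry and not merely a group isomorphism, which follows at once from the preservation of $\Ext$ groups under equivalences.
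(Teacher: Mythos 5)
Your proposal is correct and is essentially identical to the paper's argument: the paper also obtains the corollary by composing Orlov's equivalence $\mathrm{DGr}(h^{(d)}) \simeq \!^\perp\sO_{X_d}$ from \cite[Remark 3.12]{Orlov09_DerSing} with the isometry $K_0(\!^\perp\sO_{X_d}) \simeq K_0(\widetilde{q}_\ell)$ of Proposition \ref{pro:orthogonal}. Your added remark that an exact equivalence preserves $\Ext$ groups and hence the Euler pairing is exactly the (unstated) justification the paper relies on.
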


\subsubsection{Representation theory of $E_\ell$ through the B-side}

The representation theory of the ADE Lie algebras 
%associated with $E_\ell$ 
goes through the theory of abstract root systems and weight lattices. We direct readers interested to the general theory to \cite[Chapters I-III]{Humphreys}. Here, we only recall some aspects of it, in the simplified version afforded by working with ADE root systems. 

Suppose that $R$ is a finite root system of type ADE. 
%To $R$ is associated a (symmetric) Cartan matrix $A$. 
The root lattice $Q=Q(R)$ of $R$ is the $\Z$-span of any root basis. The choice of a root basis induces an isomorphism $Q\simeq \Z^\ell$ as abelian groups. %$Q\simeq \Z^r$. %, and the (symmetric) Gram matrix $A$ of the pairing written in the standard basis. 
The \textit{weight lattice} is the dual lattice $Q^\vee \coloneqq \Hom_\Z(Q,\Z)$, and a basis of $Q^\vee$ is called a basis of \textit{fundamental weights} if it is dual to a root basis of $Q$.\footnote{The terminology here is in accordance with Lie theory: for ADE root systems, the set of roots and the set of coroots coincide. Thus, the fundamental weights defined above are also dual to coroots (see \cite[III.13]{Humphreys}).} 

\smallskip

We briefly recall the approach to Lie theory through string junctions. This is first laid out in \cite{DWZ}, who construct their junction lattices with algebraic methods, and further developed in \cite{GHS13:matter}, which obtains $\bfJ$ geometrically as described above.\footnote{Without explicitly showing that the two constructions are isomorphic, \cite{GHS13:matter} nevertheless illustrates that the representation theoretic consequences are consistent.}
An important ingredient for the construction is the \textit{asymptotic charge}. 
For $w\colon M\to C$ a genus-$1$ Lefschetz fibration with general fiber $\Sigma$, this is the homomorphism 
\[c_w\colon H_2(M, \Sigma ; \Z) \to H_1(\Sigma,\Z)\]
sending the class of a thimble $D$ to the class of its boundary $\partial D$.

In the ADE setting, the kernel of the asymptotic charge $c_{\widetilde{q}}$ contains a copy of the root lattice $Q$ as a sublattice \cite[Sections 3.3-4]{DWZ}.
Combined with this, the canonical primitive embedding $Q\hookrightarrow Q^\vee$ induces an embedding 
\[
 Q^\vee \hookrightarrow \bfJ_\Q \coloneqq \bfJ\otimes \Q\]
 whose image spans the kernel of $(c_{\widetilde{q}})_\Q$ (the rational linear extention of $c_{\widetilde{q}}$ to $\bfJ_\Q$).
Since $N>\ell$, fundamental weights $\omega_1,\ldots,\omega_\ell$ do not span the whole $\bfJ_\Q$. 
Then, \cite[\S~4]{DWZ} constructs additional \textit{extended weights} $\omega_p,\omega_q\in \bfJ_\Q$ (or just $\omega_p$ in the $A_n$ case), whose charges span $\im((c_{\widetilde{q}})_\Q)\simeq H_1(\Sigma,\Q)$, in such a way that in the basis $(\{\omega_i\},\omega_p,\omega_q)$ the bilinear pairing splits as
\begin{equation}\label{eq:splitPairingJunction}
    \pairing_\bfJ = \pairing_{Q^\vee} \oplus \, \pairing_c
\end{equation}
where $\pair{u,v}_c$ is a bilinear form depending only on $c_{\widetilde{q}}(u),c_{\widetilde{q}}(v)$. 
In other words, they construct an orthogonal splitting 
\[ \begin{split} \bfJ_\Q & \simeq \ker((c_{\widetilde{q}})_\Q) \oplus \im((c_{\widetilde{q}})_\Q) \\ & \simeq Q^\vee_\Q \oplus H_1(\Sigma, \Q)
\end{split} \]

We focus on the $E_\ell$ case and provide a convenient algebro-geometric interpretation on the B-side of the above discussion above. This is done by considering 
the slightly larger pseudolattices $H_2(\Yt_d,\Sigma; \Z)  = K_0(\wt_d) \simeq K_0(X_d)$, where $d=9-\ell$ , defined in Section \ref{sec:surfacelikePseudolatt-prelim}.

Before we explain this, we recall the notation introduced in Example \ref{ex:DP}.
The $\sfE_\ell$ root lattice is isomorphic to the sublattice of $\sfI^{1,\ell}$ orthogonal to $\bfk_\ell$, and $\sfI^{1,\ell}\simeq \NS(X_d)$, with $\bfk_\ell$ corresponding to the class of a canonical divisor. 
For the rest of the Section, we fix $\ell$, and omit $\ell$ and $d$ from the notation whenever they are clear from the context.

Let $i\colon C \to X$ denote the inclusion of a smooth anticanonical divisor, and fix the symplectic basis $([\sO_C], [\sO_x])$ (here $x\in C$) for the numerical Grothendieck group $K_0^{\mathrm{num}}(C)$.
Consider the restriction map $i^*\colon K_0(X) \rightarrow K_0^{\mathrm{num}}(C)$.

\begin{proposition}
\label{prop:KerIstar}
Let $\bfp$ denote the point like object $[\sO_x]\in K_0(X)$. Then 
\[\Ker(i^*) \simeq \sfE_\ell \oplus \langle \bfp\rangle\]
\end{proposition}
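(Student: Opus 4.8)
The plan is to compute $i^*$ explicitly on numerical classes, read off its kernel, and then recognise the answer through the surface-like structure of Example \ref{ex:DP}. First I would describe $i^*$ using the compatibility of the Chern character with derived pullback. Since $i$ is the inclusion of the smooth anticanonical (hence elliptic) curve $C$, the group $K_0^{\mathrm{num}}(C)$ is free of rank two on $([\sO_C],[\sO_x])$, identified with $\ZZ^2$ by (rank, degree). For $v\in K_0(X)$ one has $\Ch(i^*v)=i^*\Ch(v)$, so $\rk(i^*v)=\rk(v)$ and $\deg(i^*v)=\int_C i^*c_1(v)=c_1(v)\cdot(-K_X)$; that is,
\[ i^* v = \rk(v)\,[\sO_C] + \big(c_1(v)\cdot(-K_X)\big)\,[\sO_x]. \]
Consequently $\Ker(i^*)=\{v\in K_0(X): \rk(v)=0 \text{ and } c_1(v)\cdot K_X=0\}$. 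In particular $\bfp=[\sO_x]$ has rank $0$ and $c_1=0$, so $\bfp\in\Ker(i^*)$.

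Next I would feed this description into the filtration $\ZZ\bfp \subseteq \bfp^\perp=\ker(\rk)\subseteq K_0(X)$ of Example \ref{ex:DP}. The condition $\rk(v)=0$ is exactly $v\in\bfp^\perp$, and under the identification $\NS(K_0(X))=\bfp^\perp/\ZZ\bfp\cong\NS(X)=\sfI^{1,\ell}$ (sending $v$ to $c_1(v)$) the condition $c_1(v)\cdot K_X=0$ says precisely that the image of $v$ lies in $\bfk_\ell^\perp=\sfE_\ell$, since $\bfk_\ell$ is the canonical class. This produces a short exact sequence
\[ 0\longrightarrow \ZZ\bfp \longrightarrow \Ker(i^*)\longrightarrow \sfE_\ell\longrightarrow 0, \]
surjectivity onto $\sfE_\ell$ following by lifting any $\alpha\in\bfk_\ell^\perp$ to a rank-$0$ class.

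To conclude I would split this sequence and identify the pairing. As $\sfE_\ell$ is free, the sequence splits as abelian groups, $\Ker(i^*)=L\oplus\ZZ\bfp$ with $L\xrightarrow{\sim}\sfE_\ell$. The surface-like axioms give $\pair{\bfp,w}=\pair{w,\bfp}=\rk(w)=0$ for every $w\in\bfp^\perp\supseteq\Ker(i^*)$, together with $\pair{\bfp,\bfp}=0$, so $\langle\bfp\rangle$ is an orthogonal summand carrying the zero form, and the pairing of two lifts depends only on their images in $\NS$ (hence is independent of the chosen splitting). Since on $\bfp^\perp$ the Euler pairing is symmetric and descends to the pairing of $\NS(X)$, its restriction to $L\cong\bfk_\ell^\perp$ is by definition the $\sfE_\ell$ form. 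Therefore $\Ker(i^*)\simeq\sfE_\ell\oplus\langle\bfp\rangle$ as pseudolattices.

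The only genuinely delicate point is the first step: getting $i^*$ right on K-theory (in particular checking that the derived pullback contributes nothing beyond the rank and the intersection number $c_1(v)\cdot(-K_X)$) and matching the resulting orthogonality condition $c_1(v)\cdot K_X=0$ with the intrinsic description $\bfk_\ell^\perp$ through the identifications of Example \ref{ex:DP}. Once these conventions are aligned, the splitting and the computation of the form are formal consequences of the surface-like structure, with the orthogonality of $\bfp$ and the section-independence of the pairing doing all the work.
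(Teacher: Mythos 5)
Your proposal is correct and takes essentially the same route as the paper: both compute $i^*(v) = \rk(v)\,[\sO_C] + \big(c_1(v)\cdot(-K_X)\big)\,[\sO_x]$ and identify $\Ker(i^*)$ as the rank-zero classes whose $\NS$-component is orthogonal to $\bfk_\ell$, i.e. $\sfE_\ell \oplus \langle \bfp\rangle$. The paper states this formula directly from the splitting $K_0(X)\simeq \langle [\sO_X]\rangle \oplus \NS(X) \oplus \langle \bfp\rangle$, whereas you justify it via Chern character compatibility and spell out the splitting and pairing checks; this extra detail is sound but does not constitute a different method.
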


\begin{proof}
Identify $ K_0(X)\simeq \langle [\sO_X] \rangle \oplus \NS(X) \oplus \langle \bfp \rangle$. Then, for $v=r[\sO_X] + \bfe + d\bfp \in K_0(X)$ (here $\bfe \in \NS(X)$), the restriction $i^*$ writes
\[ i^*(v) = r[\sO_C] - (\bfe \cdot \bfk)[\sO_x]\]
since $C$ has class $-\bfk$. Then, the kernel of $i^*$ consists of rank zero classes in $K_0(X)$ orthogonal to $\bfk$. 
Thus, $\ker(i^*) \simeq \sfE_\ell \oplus \bfp$.
\end{proof}

By Theorem \ref{thm:MutationSequence} and \cite[Theorem 5.1]{HarderThompson20}, the restriction map $i^\ast$ is the B-side counterpart of the asymptotic charge $c_{\wt}\colon H^2(\widetilde{Y},\Sigma;\Z) \to H^1(\Sigma,\Z)$.  Abusing notation, denote again by $\bfp$ the image of $\bfp$ under the chosen isomorphism $K_0(X) \simeq K_0(\widetilde{w})$. 
It follows that:

\begin{corollary}
\label{cor:kernel-dec}
    The kernel $\Ker(c_{\wt})$ decomposes as an orthogonal sum
    \[\sfE_{\ell} \oplus \langle \bfp \rangle\]
\end{corollary}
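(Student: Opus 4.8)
The plan is to deduce the statement from its B-side analogue, Proposition \ref{prop:KerIstar}, by transporting the decomposition of $\Ker(i^\ast)$ across the isometry of pseudolattices relating $K_0(X_d)$ and $K_0(\wt_d)$.

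First I would fix the isometry $\Phi\colon K_0(X_d)\xrightarrow{\sim}K_0(\wt_d)$ furnished by Theorem \ref{thm:MutationSequence}, realised concretely by the mutation sequences $\beta_d$ (equivalently, the abstract isometry of \cite[Theorem 5.1]{HarderThompson20}). Being an isometry of pseudolattices, $\Phi$ preserves the pairing, hence sends orthogonal summands to orthogonal summands; moreover, by the normalisation fixed in the text it carries the point-like class $\bfp=[\sO_x]$ to the point-like class of $K_0(\wt_d)$, again denoted $\bfp$.

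The key step is to record that $\Phi$ intertwines the restriction map $i^\ast\colon K_0(X_d)\to K_0^{\mathrm{num}}(C)$ with the asymptotic charge $c_{\wt}\colon H_2(\Yt,\Sigma;\Z)\to H_1(\Sigma,\Z)$, under a fixed identification of the symplectic bases $K_0^{\mathrm{num}}(C)\simeq H_1(\Sigma,\Z)$. This is precisely the B-side/A-side dictionary stated just before the Corollary: under the identifications of \cite{HarderThompson20}, both maps are the defining boundary (charge) homomorphism of the respective surface-like pseudolattice, and Theorem \ref{thm:MutationSequence} matches the two exceptional bases on the nose. Granting the intertwining, $\Phi$ restricts to an isometry $\Ker(i^\ast)\xrightarrow{\sim}\Ker(c_{\wt})$.

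It then remains to transport the decomposition. Proposition \ref{prop:KerIstar} gives $\Ker(i^\ast)\simeq\sfE_\ell\oplus\langle\bfp\rangle$, and this sum is orthogonal: every $\bfe\in\sfE_\ell\subset\NS(X_d)$ lies in $\bfp^\perp={}^\perp\!\bfp$ and so satisfies $\pair{\bfp,\bfe}=\pair{\bfe,\bfp}=\rk(\bfe)=0$, while $\pair{\bfp,\bfp}=0$. Applying $\Phi$ and using that it preserves the pairing and fixes $\bfp$, this orthogonal decomposition passes to $\Ker(c_{\wt})\simeq\sfE_\ell\oplus\langle\bfp\rangle$, which is the claim. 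The only non-formal point—and thus the main obstacle—is the intertwining of $i^\ast$ with $c_{\wt}$; once this identification is in place, the corollary is pure transport of structure along $\Phi$.
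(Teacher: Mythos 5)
Your proposal is correct and takes essentially the same route as the paper: there, too, the corollary is deduced from Proposition \ref{prop:KerIstar} by invoking Theorem \ref{thm:MutationSequence} together with \cite[Theorem 5.1]{HarderThompson20} to identify the restriction map $i^\ast$ as the B-side counterpart of the asymptotic charge $c_{\wt}$, and then transporting the decomposition across this isometry. Your explicit remarks that the intertwining of $i^\ast$ with $c_{\wt}$ is the only non-formal step, and that orthogonality follows since $\sfE_\ell$ consists of rank-zero classes while $\pair{\bfp,\bfp}=0$, simply spell out what the paper leaves implicit.
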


The Corollary %\ref{prop:KerIstar} 
is a stronger version, for the pseudolattice $K_0(\widetilde{w})$, of the statement of \cite{DWZ} that $\ker(c_{\widetilde{q}})$ contains a copy of $Q=\sfE_\ell$.

\smallskip

To introduce an analog of the extended weights and of the splitting \eqref{eq:splitPairingJunction}, we work on the B-side and give a basis of $K_0(X)_\Q$ which splits off the image of $i^*$, and consider the associated Gram matrix of the pairing. 

Let $\beta_1,\ldots,\beta_\ell$ be a root basis of  $\sfE_\ell$. We may regard these as embedded in $\bfk^\perp \subset \sfI^{1,\ell}$. Then, $(\bfk,\beta_1,\ldots,\beta_\ell)$ is a basis of $\sfI^{1,\ell}_\Q\simeq \NS(X)_\Q$, and
\begin{equation}\label{eq:basisKuz}
([\sO_X], \bfk, \beta_1,\ldots, \beta_\ell,\bfp)   
\end{equation}
is a (rational) basis of $K_0(X)_\Q$.
Observe that $\mathrm{Span}_\Q([\sO_X],\bfk)$ is isomorphic to $\im(i^*_\Q)$. 

We may use Rieman--Roch to compute the Gram matrix of the Euler pairing in the basis $\eqref{eq:basisKuz}$. Indeed, since $\bfk$ is the class of a canonical divisor, we have $\td_X = (1,-\frac{\bfk}{2}, 1)$. Then 
\begin{align*}
    \chi(\sO_X,\bfk)=\int (0,\bfk,0)\td_X = -\frac{\bfk^2}{2} = -\frac{d}{2} && \qquad
    \chi(\sO_X,\beta_i) = - \frac{\beta_i \cdot \bfk}{2}=0 \\
    \chi(\bfk,\sO_X)= - \chi(\sO_X,\bfk) = \frac{d}{2} && \qquad
\chi(\beta_i,\sO_X)=-\chi(\sO_X,\beta_i)=0
\end{align*}
for $i=1,\ldots,\ell$. Let $A_\ell$ be the Cartan matrix of $\sfE_\ell$, i.e. the Gram matrix of the root pairing among the $\beta_i$. The Gram matrix of the bilinear form is:

\begin{equation}\label{eq:GramKuz}
{%\scriptsize
\left(\begin{array}{c|c|c}
\begin{array}{cc}
   1  & -\frac{d}{2} \\
  \frac{d}{2}   & d
\end{array}
&\begin{array}{ccc}
     0 & \ldots & 0 \\
     0 & \ldots & 0 
\end{array}  & \begin{array}{c}
     1  \\
     0 
\end{array}  \\
\hline
\begin{array}{cc}
      0 & 0 \\ \vdots & \vdots \\ 0 & 0
\end{array}&
    
        \text{\large $A_{\ell}$}
     &
    \begin{array}{c}
      0 \\ \vdots \\ 0  
\end{array}\\
\hline
    \begin{array}{cc}
        1 & 0 
    \end{array} & 
    \begin{array}{ccc}
      0 & \ldots & 0  \\
\end{array}
    &
     0
    \end{array}\right)
    }
\end{equation}
\smallskip

Using Theorem \ref{thm:MutationSequence} and \cite[Theorem 5.1]{HarderThompson20}, we may rephrase this description in terms of $K_0(\widetilde{w})$ as follows.

\begin{proposition}
\label{pro:H-splitting} 
The pseudolattice $K_0(\widetilde{w})$  admits a splitting 
\begin{equation}
    \label{eq:splitting}
    K_0(\widetilde{w})_\Q \simeq \Ker((c_{\wt})_\Q) \oplus \im((c_{\wt})_\Q)
\end{equation}
which, modulo $\bfp$, is orthogonal with respect to the Seifert pairing.    
\end{proposition}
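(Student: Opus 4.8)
The plan is to reduce the statement to the explicit computation already carried out on the B-side. By Theorem~\ref{thm:MutationSequence} and \cite[Theorem~5.1]{HarderThompson20} there is an isometry of pseudolattices $K_0(\widetilde{w}) \simeq K_0(X)$ under which the asymptotic charge $c_{\wt}$ corresponds to the restriction map $i^*\colon K_0(X)\to K_0^{\mathrm{num}}(C)$; in particular $\Ker((c_{\wt})_\Q)$ and $\im((c_{\wt})_\Q)$ are identified with $\Ker(i^*_\Q)$ and $\im(i^*_\Q)$, and the Seifert pairing is identified with the Euler pairing $\chi$. It therefore suffices to produce, on $K_0(X)_\Q$, a direct-sum decomposition $K_0(X)_\Q \simeq \Ker(i^*_\Q)\oplus W$ with $W$ mapping isomorphically onto $\im(i^*_\Q)$, and to verify that this splitting is orthogonal for $\chi$ after discarding the contribution of $\bfp$. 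Transporting back along the isometry then gives \eqref{eq:splitting}.

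For the splitting I would use the rational basis \eqref{eq:basisKuz}, namely $([\sO_X],\bfk,\beta_1,\ldots,\beta_\ell,\bfp)$, where $\beta_1,\ldots,\beta_\ell$ is a root basis of $\sfE_\ell\subset \bfk^\perp$. By Proposition~\ref{prop:KerIstar} the kernel is $\Ker(i^*_\Q)=\mathrm{Span}_\Q(\beta_1,\ldots,\beta_\ell,\bfp)\simeq \sfE_\ell\oplus\langle\bfp\rangle$, so I take $W\coloneqq \mathrm{Span}_\Q([\sO_X],\bfk)$ as its complement. From the formula $i^*(r[\sO_X]+\bfe+d\bfp)=r[\sO_C]-(\bfe\cdot\bfk)[\sO_x]$ established in the proof of Proposition~\ref{prop:KerIstar} one reads $i^*([\sO_X])=[\sO_C]$ and $i^*(\bfk)=-(\bfk^2)[\sO_x]=-d[\sO_x]$; since $d\neq 0$ for $d=1,2,3$, the map $i^*_\Q$ restricts to an isomorphism $W\xrightarrow{\sim}\im(i^*_\Q)$, exactly as noted in the remark preceding \eqref{eq:basisKuz}. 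This realizes the desired vector-space splitting.

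Orthogonality modulo $\bfp$ is then read off directly from the Gram matrix \eqref{eq:GramKuz} in the basis \eqref{eq:basisKuz}. That matrix is block-structured: the classes $\beta_1,\ldots,\beta_\ell$ are $\chi$-orthogonal, in both orders, to each of $[\sO_X]$, $\bfk$ and $\bfp$, so $\chi$ vanishes on $\sfE_\ell\times W$ and on $W\times\sfE_\ell$; the only nonzero entries coupling $W$ to $\Ker(i^*_\Q)$ are $\chi([\sO_X],\bfp)=\chi(\bfp,[\sO_X])=1$. Hence the cross-pairing between $\Ker(i^*_\Q)$ and $W$ vanishes identically except for the single interaction of $\bfp$ with $[\sO_X]$; equivalently, after quotienting the kernel by $\langle\bfp\rangle$ the decomposition $\sfE_\ell\oplus W$ is genuinely $\chi$-orthogonal. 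Transporting along the isometry $K_0(X)\simeq K_0(\widetilde{w})$ converts $\chi$ into the Seifert pairing and yields the claim; as a byproduct this recovers the orthogonal summand $\sfE_\ell$ of Corollary~\ref{cor:kernel-dec}.

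Since the substantive inputs---the mutation sequence of Theorem~\ref{thm:MutationSequence}, the identification of $c_{\wt}$ with $i^*$, and the Riemann--Roch computation of \eqref{eq:GramKuz}---are already in place, I expect no serious obstacle: the proposition is in essence a repackaging of the block form of \eqref{eq:GramKuz}. The two points requiring genuine care are the non-vanishing of $d$ (which is what guarantees that $W$ surjects onto $\im(i^*_\Q)$ rather than landing in a proper subspace), and a precise formulation of \emph{orthogonal modulo $\bfp$}: the sole obstruction to exact orthogonality of the splitting is the entry $\chi(\bfp,[\sO_X])=1$, and this cannot be removed, since $\bfp$ is isotropic yet pairs nontrivially with the rank-carrying class $[\sO_X]$.
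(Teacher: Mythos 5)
Your proposal is correct and follows essentially the same route as the paper: the paper likewise deduces Proposition~\ref{pro:H-splitting} by transporting, via Theorem~\ref{thm:MutationSequence} and \cite[Theorem 5.1]{HarderThompson20}, the B-side decomposition $K_0(X)_\Q = \Ker(i^*_\Q) \oplus \mathrm{Span}_\Q([\sO_X],\bfk)$ given by Proposition~\ref{prop:KerIstar} and the basis \eqref{eq:basisKuz}, with orthogonality modulo $\bfp$ read off from the block structure of the Gram matrix \eqref{eq:GramKuz}. Your elaborations (the explicit images $i^*([\sO_X])=[\sO_C]$, $i^*(\bfk)=-d[\sO_x]$ with $d\neq 0$, and the identification of $\chi(\bfp,[\sO_X])=1$ as the sole obstruction to exact orthogonality) only make explicit what the paper leaves as a remark.
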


Incidentally, we remark that $\sfI^{1,\ell}\simeq \NS(K_0(\widetilde{w}_d))$ contains fundamental weights for $\sfE_\ell$, and hence a copy of the weight lattice \cite[\S~8.2.4]{Dol_CAG}. Thus the expectation, formulated in \cite[\S~3.5]{GHS13:matter}, that string junctions encode the full structure of ADE algebras, and hence reproduce arbitrary representations and their weights, is confirmed in the pseudolattices $K_0(\widetilde{w}_d)$.

\section{Categorical lifts}\label{sec:lifts}

The aim of this section is to lift the isometry of Theorem \ref{thm:MutationSequence} to an equivalence of Fukaya--Seidel categories. To do so, recall the LG models $(\Yt,\wt)$ (introduced in \eqref{eq:TildeYw}) and $(M,W)$ from \cite{AKO06} (see Section \ref{sec:geometry-cats}). We drop the indices $d$ and $\ell$ from the notation. For ease of exposition, in this section we denote by $(N,p)$ the LG model $(\Yt,\wt)$, and by $(N',p')$ the LG model $(M,W)$. 

Since it is built from Weierstrass forms, $(N,p)$ comes with a distinguished section $D$. Let $\Omega$ be a symplectic form on $N$ such that $[\Omega]$ is Poincar\'e dual to $[D]$ (the brackets $[-]$ denote (co)homology classes). 
Similarly, consider $D'$ and $\Omega'$ for $(N',p')$. 
The manifolds $G\coloneqq N \setminus D$ and $G'\coloneqq N^\prime\setminus D'$, equipped with the restrictions of the maps and the chosen symplectic forms, are exact symplectic Lefschetz fibrations.

We may perturb $\Omega$ without changing its cohomology class so that,
in a tubular neighborhood  $t(D)$ of $D$, a symplectic parallel transport is well-defined. 
Arguing as in \cite[Section (3A)]{Seidel-more}, this produces a trivialization of  $t(D)$. 
Let $E \coloneqq N\setminus t(D)$. The restriction $\pi\coloneqq p_{|E}$ gives rise to an exact symplectic Lefschetz fibration which is trivial near $\partial_h E$ (this denotes the horizontal boundary of $E$, as in \cite[Section (15a)]{Seidel}). 
We write $\omega$ for the restriction of $\Omega$ to $E$.
Likewise, from $(N',p',\Omega')$ we obtain an exact symplectic Lefschetz fibration $(E',\pi',\omega')$, trivial near $\partial_h E'$.

We associate full exceptional collections of vanishing cycles to $(E,\pi,\omega)$ and $(E',\pi',\omega')$ with the construction in Section \ref{sec:construct-vc}.
Let $K$ and $K'$ denote reference fibers of $\pi$ and $\pi'$. 

Proceeding as in \cite[Section 3.3]{AKO06}, we obtain cycles $(L_0',\ldots,L_{\ell+2}')$ on $K'$ with cohomology classes as in \cite[Lemma 3.1]{AKO06}.
Choosing reference fiber and vanishing paths for $(E,\pi,\omega)$ as in Section \ref{sec:construct-vc} yields a collection of vanishing cycles whose cohomology classes are in Proposition \ref{pro:coho-classes}. 
Applying to this collection the sequences of mutations of Theorem \ref{thm:MutationSequence} (and Remark \ref{rem:change-basis}), we obtain a choice of vanishing paths for $(E,\pi,\omega)$, and hence an exceptional collection of vanishing cycles $(L_0,\ldots,L_{\ell+2})$ on $K$, such that for all $i=0, \dots, l+2$
\begin{equation}
\label{eq:homology} \varphi[L_i]=[L_i']
\end{equation}
under a suitable isomorphism $\varphi \colon H_1(K,\Z)\to H_1(K',\Z)$.

\begin{thm}
    \label{thm:equivOfCategories}
There is an equivalence of $A_\infty$-categories
\[ \Phi \colon D^b\lagvc(E,\pi,\omega) \xrightarrow{\sim} D^b\lagvc(E',\pi',\omega') \]
sending the full exceptional collection $(L_0\ldots,L_{\ell+2})$ to the full exceptional collection $(L'_0,\ldots,L'_{\ell+2})$.
\end{thm}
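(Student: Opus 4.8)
The plan is to realize $\Phi$ geometrically, as the functor induced by a symplectomorphism of fibers carrying one ordered collection of vanishing cycles onto the other. By Seidel's construction of the directed category of vanishing cycles \cite{Seidel_VanishingCycles,Seidel}, $D^b\lagvc(E,\pi,\omega)$ depends only on the exact symplectic fiber $(K,\omega|_K)$ together with the ordered tuple of vanishing cycles $(L_0,\ldots,L_{\ell+2})$, viewed as exact Lagrangians up to the action of exact symplectomorphisms and exact (Hamiltonian) isotopy. Thus it suffices to produce an exact symplectomorphism $\psi\colon (K,\omega|_K)\to (K',\omega'|_{K'})$ under which $\psi(L_i)$ is Hamiltonian isotopic to $L_i'$ for every $i$; the induced map on Floer-theoretic data will then be the sought equivalence, sending $L_i\mapsto L_i'$.

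\textbf{Identifying the fibers.} Having removed the distinguished sections together with their tubular neighborhoods, $K$ and $K'$ are one-holed tori $\Sigma_{1,1}$ carrying exact symplectic structures, since $[\Omega]$ and $[\Omega']$ were chosen Poincar\'e dual to $[D]$ and $[D']$. First I would observe that exact symplectic surfaces of the same topological type are symplectomorphic by a Moser-type argument, and that one may arrange the symplectomorphism to induce the prescribed isomorphism $\varphi\colon H_1(K,\Z)\to H_1(K',\Z)$ of \eqref{eq:homology} on first homology: the mapping class group of $\Sigma_{1,1}$ surjects onto the orientation-preserving automorphisms $\SL_2(\Z)=\Aut^+(H_1)$, and $\varphi$ is realized by an orientation-preserving diffeomorphism. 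This yields $\psi$ with $\psi_\ast=\varphi$.

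\textbf{Matching the vanishing cycles.} By construction (Proposition \ref{pro:coho-classes} together with the mutations of Theorem \ref{thm:MutationSequence} and the change of basis of Remark \ref{rem:change-basis}), the classes satisfy $\varphi[L_i]=[L_i']$, whence $[\psi(L_i)]=[L_i']$ in $H_1(K',\Z)$. Each $L_i$ is an essential, non-separating simple closed curve, and on a one-holed torus such curves are classified up to isotopy by their primitive homology class. Therefore $\psi(L_i)$ is smoothly isotopic to $L_i'$; as both are exact Lagrangian circles in an exact symplectic surface, this upgrades to a Hamiltonian isotopy. Consequently the two ordered collections agree up to the equivalence under which $\lagvc$ is invariant, and $\psi$ induces $\Phi$.

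\textbf{Main obstacle.} The delicate points lie in the last upgrade and in the grading. Passing from matching homology classes to matching \emph{Hamiltonian} (rather than merely smooth) isotopy classes of exact Lagrangians must be argued carefully, as must the compatibility of $\psi$ with the exact primitives near the horizontal boundary $\partial_h E$, where the fibrations were trivialized; this is precisely where the choice of symplectic forms Poincar\'e dual to the sections is essential. In addition, to obtain an equivalence of \emph{graded} $A_\infty$-categories rather than merely an isometry of Grothendieck groups, one must fix gradings (and spin structures) on the $L_i$ compatibly with those on the $L_i'$. Since the fibers are tori this is unobstructed, but it requires tracking the grading shifts introduced by the mutations of Theorem \ref{thm:MutationSequence} so that they match the shifts implicit in the collection of \cite{AKO06}.
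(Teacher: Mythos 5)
Your proposal is correct and follows essentially the same route as the paper's proof: lift $\varphi$ to a diffeomorphism of the fibers, use Moser's trick to upgrade it to an (exact) symplectomorphism, deduce that corresponding vanishing cycles are Hamiltonian isotopic because they are homologous exact curves in a one-holed torus, and conclude that the directed categories agree. The only cosmetic difference is that the paper packages the final step via the uniqueness part of \cite[Lemma 16.9]{Seidel}, producing an isomorphism of Lefschetz fibrations $(E,\pi,\omega)\xrightarrow{\sim}(E',\pi',\omega')$ rather than arguing directly from the definition of $\lagvc$; the issues you flag (exactness of the fiber identification, gradings, area normalization) are treated equally implicitly there.
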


\begin{proof}
Observe that $K$ and $K'$ are exact symplectic manifolds with the restrictions  $\kappa$ and $\kappa'$ of $\omega$ and $\omega'$. 
We write $\theta$ (resp. $\theta'$) for a primitive of $\kappa$ (resp. $\kappa'$). By construction, the vanishing cycles $L_i$ (resp. $L_i'$) are exact Lagrangians with respect to $\theta$ (resp. $\theta'$), i.e. $\theta_{L_i}=dh_i$ and $\theta_{L_i'}=dh_i'$ for smooth functions $h_i,h_i'$.

The map $\varphi$ lifts to a diffeomeorphism $\phi \colon K \to K'$ so that $\varphi = \phi_*$, and we have $[\kappa]=\phi^*[\kappa']$. 
With a standard application of Moser's trick, we may assume that 
\begin{equation}\label{eq:Moser} 
\theta - \phi^* \theta' = dg
\end{equation} 
is an exact form. This implies in particular that $\phi$ is a symplectomorphism. 
Moreover, it guarantees that $L_i$ and $\phi^{-1}L_i'$ are Hamiltonian isotopic, as by \eqref{eq:homology} they are homologous and by \eqref{eq:Moser} they bound zero area.

Then, by the uniqueness part of \cite[Lemma 16.9]{Seidel} we obtain an isomorphism $(E,\pi,\omega) \xrightarrow{\sim} (E',\pi',\omega')$ (defined as in \cite[Section (15b)]{Seidel}) and hence the desired equivalence.
\end{proof}

We can relate the Fukaya--Seidel categories of Theorem \ref{thm:equivOfCategories} with those associated to the manifolds $G$ and $G'$. Let $q\coloneqq p_{|G}$ and $\varpi \coloneqq \Omega_{|G}$, and similarly for $q'$ and $\varpi'$.
Observe that $(L_0,\ldots,L_{\ell+2})$ is a full exceptional collection for  $D^b\lagvc(G,q,\varpi)$, and likewise $(L'_0,\ldots,L'_{\ell+2})$ for $D^b\lagvc(G',q',\varpi')$. 

\begin{corollary}
\label{cor:equivOfCategories}
There is an equivalence of $A_\infty$-categories
\[ D^b\lagvc(G,q,\varpi) \xrightarrow{\sim} D^b\lagvc(G',q',\varpi') \]
sending the full exceptional collection $(L_0\ldots,L_{\ell+2})$ to the full exceptional collection $(L'_0,\ldots,L'_{\ell+2})$.
\end{corollary}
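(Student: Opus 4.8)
The plan is to deduce the statement from Theorem \ref{thm:equivOfCategories} by showing that enlarging $(E,\pi,\omega)$ to $(G,q,\varpi)$ (and $(E',\pi',\omega')$ to $(G',q',\varpi')$) does not change the directed Fukaya category of vanishing cycles. Concretely, I would first establish canonical equivalences
\[ D^b\lagvc(G,q,\varpi)\simeq D^b\lagvc(E,\pi,\omega), \qquad D^b\lagvc(G',q',\varpi')\simeq D^b\lagvc(E',\pi',\omega'), \]
and then form the composite of the first equivalence, the functor $\Phi$ of Theorem \ref{thm:equivOfCategories}, and the inverse of the second. Since $\Phi$ carries $(L_0,\ldots,L_{\ell+2})$ to $(L'_0,\ldots,L'_{\ell+2})$ while the two comparison equivalences fix the vanishing cycles, the composite has the asserted effect on objects.

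The geometric input is that $E$ is a Liouville domain whose completion is $G$. By construction $E=N\setminus t(D)\subset N\setminus D=G$, the symplectic forms agree where both are defined ($\omega=\Omega_{|E}$, $\varpi=\Omega_{|G}$), and $G\setminus E=t(D)\setminus D$ is a collar; since $[\Omega]$ is Poincar\'e dual to $[D]$, the complement of the divisor $D$ carries a Liouville structure whose flow points out of $E$, exactly as in \cite[Section (3A)]{Seidel-more}. Note that $\pi$ and $q$ have the same critical points, as the nodes of the singular fibres avoid the smooth section $D$. Passing to reference fibres, the fibre $K$ of $\pi$ is a genus-one surface with one boundary circle (a Liouville domain), while the fibre of $q$ is the once-punctured torus obtained as its Liouville completion. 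Crucially, the vanishing cycles $L_i$ are compact exact Lagrangians contained in the compact core $K$, and they are literally the same embedded curves in both fibres.

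Next I would invoke the locality of the directed category $\lagvc$. Its objects are the $L_i$, and its morphism spaces and $A_\infty$-products are computed from Lagrangian Floer theory in the fibre, i.e. from counts of pseudoholomorphic polygons with boundary on the $L_i$. Because the $L_i$ are exact and contained in the core, a maximum-principle (confinement) argument for a cylindrical almost complex structure on the end shows that every such polygon stays inside $K$; hence the Floer complexes and all $m_k$ coincide whether computed in the fibre of $\pi$ or in the fibre of $q$. This yields a strict identification of the directed $A_\infty$-categories $\lagvc(E,\pi,\omega)$ and $\lagvc(G,q,\varpi)$ for a common choice of vanishing paths, and likewise on the primed side; the relevant formalism is that of \cite[Sections (15b), (16)]{Seidel}. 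Exactness of the $L_i$ for the restricted Liouville forms and the grading data are inherited verbatim from Section \ref{sec:construct-vc} and from the proof of Theorem \ref{thm:equivOfCategories}, so no new input is needed there.

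The main obstacle is precisely this confinement statement: one must rule out pseudoholomorphic polygons escaping into the attached cylindrical end. In the present genus-one situation this is mild --- the fibres are (punctured) tori, the $L_i$ are embedded circles, and the moduli spaces governing the $A_\infty$-structure reduce to an essentially combinatorial count of immersed polygons --- so once a cylindrical $J$ is fixed near the puncture the maximum principle applies cleanly, and the equivalence follows.
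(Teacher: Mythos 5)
Your proposal is correct and follows essentially the same route as the paper: the paper's proof also constructs comparison equivalences $\alpha\colon D^b\lagvc(G,q,\varpi)\simeq D^b\lagvc(E,\pi,\omega)$ and $\alpha'$ on the primed side (justified by Hamiltonian isotopy of Lagrangians into $E$ together with the maximum principle confining holomorphic discs to $E$), and then takes the composite $(\alpha')^{-1}\circ\Phi\circ\alpha$. Your more detailed discussion of Liouville completions and cylindrical almost complex structures is simply an expanded version of the same confinement argument the paper invokes in one line.
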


\begin{proof}
An equivalence $\alpha\colon D^b\lagvc(G,q,\varpi) \simeq D^b\lagvc(E,\pi,\omega)$ follows from the fact that Lagrangians in $G$ are Hamiltonian isotopic to Lagrangians in $E$, and holomorphic discs in $G$ with boundary in $E$ must be entirely contained in $E$ by the maximum principle.
The desired equivalence is then the composition $(\alpha')^{-1} \circ \Phi \circ \alpha$, where $\Phi$ is as in Theorem \ref{thm:equivOfCategories} and $\alpha' \colon D^b\lagvc(G',q',\varpi') \simeq D^b\lagvc(E',\pi',\omega')$ is analogous to $\alpha$.
\end{proof}

\begin{remark}
    \label{rem:compare-HK}
For a log Calabi--Yau surface $(Y,D)$ with maximal boundary and distinguished complex structure, \cite[Theorem 1.1]{HackingKeating}  constructs a mirror symplectic Lefschetz fibration over $\CC$ with fibre a $k$-punctured elliptic curve, where $k$ is the number of components of $D$. 

In the language of \cite[Section 6.4]{HackingKeating}, the exact Lefschetz fibration $(G',q',\varpi')$ is expected to be mirror to 
the deformation with distinguished complex structure of a log CY surface given by %$(X,D)$, where $X$ is 
a degree $d$ del Pezzo surface and an elliptic curve with a single node.
\end{remark} 

\begin{remark}\label{rem:BulkDef}
Removing the section $D$ and $D'$ from $N$ and $N'$ puts us the exact setting and avoids symplectic area computations. 

The relation between the Fukaya--Seidel categories associated to $(G,q)$ and $(N,p)$ is quite subtle. 
By \cite[Theorem 1.2]{SeidelAinfII12}, the category $D^b\lagvc(G,q,\varpi)$ is equivalent to a specific bulk deformation of $D^b\lagvc(N,p,\Omega)$. This deformation is trivial for the mirrors of $\pr 2$ and toric del Pezzo surfaces \cite[Section (5.b)]{SeidelAinfII12}. 
\end{remark}

%\bibliographystyle{alpha}
%\bibliography{bibliography}

\newcommand{\etalchar}[1]{$^{#1}$}

\end{document}